\theoremstyle{plain}
\newtheorem{theorem}{Theorem}[section]
\newtheorem{corollary}[theorem]{Corollary}
\newtheorem{prop}[theorem]{Proposition}
\newtheorem{observation}[theorem]{Observation}
\newtheorem{lemma}[theorem]{Lemma} 
\newtheorem{question}[theorem]{Question}
\theoremstyle{definition}
\newtheorem{definition}[theorem]{Definition}
\newtheorem{example}[theorem]{Example}
\newtheorem*{remark*}{Remark}
\newtheorem*{claim*}{Claim}
\theoremstyle{remark}
\newtheorem{remark}[theorem]{Remark}
\def\dom{\mathop{\mathrm{Dom}}\nolimits}
\def\str#1{\mathbf {#1}}
\def\Fraisse{Fra\"{\i}ss\' e}
\def\range{\mathop{\mathrm{Range}}\nolimits}
\def\Aut{\mathop{\mathrm{Aut}}\nolimits}
\def\age{\mathop{\mathrm{Age}}\nolimits}
\renewcommand{\restriction}{\mathord{\upharpoonright}}
\def\marginpar#1{}
\begin{document}
\bibliographystyle{alpha}

\title[]{Extension property for partial automorphisms of the $n$-partite and semigeneric tournaments}

\authors{
\author[J. Hubi\v cka]{Jan Hubi\v cka}
\address{Department of Applied Mathematics (KAM)\\ Charles University\\ Prague, Czech Republic}
\email{hubicka@kam.mff.cuni.cz}
\author[C. Jahel]{Colin Jahel}
\address{Institute of Algebra\\TU Dresden\\ Dresden, Germany}
\email{colin.jahel@tu-dresden.de}
\author[M. Kone\v cn\'y]{Mat\v ej Kone\v cn\'y}
\address{Institute of Algebra\\TU Dresden\\ Dresden, Germany \\and 
Department of Applied Mathematics (KAM)\\ Charles University\\ Prague, Czech Republic}
\email{matej.konecny@tu-dresden.de}
\author[M. Sabok]{Marcin Sabok}
\address{Department of Mathematics and Statistics\\ McGill University\\ Montreal, Canada}
\email{marcin.sabok@mcgill.ca}

\thanks{J. H. and M. K. were supported by the project 21-10775S of the Czech Science Foundation (GA\v CR). Part of the work has been done when J. H. and M. K. visited M. S. thanks to the support of the project RandNET which has received funding from the European Union’s Horizon 2020 research and innovation programme under the Marie Skłodowska-Curie grant agreement No 101007705. C. J. was supported by ANR project AGRUME (ANR-17-CE40-0026), and by the Deutsche Forschungsgemeinschaft (DFG, German Research Foundation) -- project number 467967530. M. S. was funded by the NSERC Discovery Grant RGPIN-2020-05445, NSERC Discovery Accelerator Supplement RGPAS-2020-00097 and the NCN Grant Harmonia 2018/30/M/ST1/00668.}
}
\date{}

\begin{abstract}
We present a proof of the extension property for partial
automorphisms (EPPA) for classes of finite $n$-partite tournaments for $n \in \{2,3,\ldots,\omega\}$,
and for the class of finite semigeneric tournaments. We also prove that the generic $\omega$-partite tournament and the generic semigeneric tournament have ample generics.
\end{abstract}
\maketitle
\section{Introduction}
A class of structures $\mathcal C$ has the \emph{extension property for
partial automorphism} (\emph{EPPA}), sometimes also called the \emph{Hrushovski
property}, if for every $\str A\in \mathcal C$ there exists $\str B\in \mathcal C$
containing $\str A$ as a substructure with the property that every isomorphism
of two substructures of $\str A$ (also called a \emph{partial automorphism} of $\str A$)
extends to an automorphism of $\str B$. We call $\str B$ with such a property an {\em EPPA-witness of $\str A$}.

A \emph{directed graph} is a structure with one binary relation which is irreflexive (i.e. no loops) and antisymmetric (i.e. no bi-directional edges).\footnote{Note that this is sometimes called an \emph{oriented graph}, while a directed graph is sometimes allowed to have loops and/or bi-directional edges. We decided to follow the nomenclature which is standard in the context of homogeneous structures.} If, in a directed graph, the pair $(x,y)$ is in the relation, we say that there \emph{is an edge between $x$ and $y$}, or that \emph{$x$ and $y$ are adjacent}, and that this edge \emph{goes} (or \emph{is oriented}) from $x$ to $y$. Given $n \in \{2,3,\ldots,\omega\}$, a directed graph $\str A$ is an \emph{$n$-partite tournament} if its vertex set can
be partitioned into (possibly empty) pairwise disjoint sets $A_1\cup A_2 \cup\ldots \cup A_n=A$ (called \emph{parts}) such that
every pair of vertices in different parts is connected by a directed edge and
there are no edges between vertices in same part. An $\omega$-partite tournament $\str A$ is \emph{semigeneric} if for every pair of parts $X$ and $Y$ of $\str A$, and every $a\neq b\in X$ and $c\neq d\in Y$
it holds that the number of edges directed from $\{a,b\}$ to $\{c,d\}$ is even.

In 1992, Hrushovski~\cite{hrushovski1992} established that the class of all finite graphs has EPPA. This result was used by Hodges, Hodkinson, Lascar, and Shelah to show the small index property for the random graph~\cite{hodges1993b}. After this, the quest of studying EPPA continued with a series of papers 
including~\cite{Herwig1995,herwig1998,herwig2000,hodkinson2003,solecki2005,vershik2008,Aranda2017,Hubicka2017sauer,Conant2015,Hubicka2018metricEPPA,Konecny2018b,Konecny2019a,eppatwographs,otto2017,Evans3,Hubicka2018EPPA,BradleyEPPAnumbers}.

In this paper we contribute to this quest by giving proofs of the following two theorems:
\begin{theorem}
\label{thm:partite}
For every $n\in \{2,3,\ldots,\omega\}$ the class of all finite $n$-partite tournaments has EPPA.
\end{theorem}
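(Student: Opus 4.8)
The plan is to realise finite $n$-partite tournaments as the \emph{complete} members of a free amalgamation class, and to transfer EPPA to them along a completion. First I would move to the language $L=\{R,N\}$, with $R$ binary and $N$ binary, symmetric and irreflexive, sending an $n$-partite tournament $\str A$ to the $L$-structure $\hat{\str A}$ in which $R$ is the directed-edge relation of $\str A$ and $N(x,y)$ holds exactly when $x\neq y$ are non-adjacent (equivalently, when they lie in a common part). Since lying in a common part is definable from $R$ in any $n$-partite tournament, the map $\str A\mapsto\hat{\str A}$ preserves partial automorphisms, automorphisms, and coherent families of these; so it suffices to build a coherent EPPA-witness for $\hat{\str A}$ among the \emph{complete} $L$-structures --- those in which every pair carries $R$ or $N$ --- in which, moreover, $N$ is an equivalence relation with at most $n$ classes; forgetting $N$ from such a structure returns an $n$-partite tournament.

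Let $\mathcal{E}_n$ be the class of finite $L$-structures admitting no homomorphic image of: a loop; a pair carrying $R$ in both directions; a pair carrying both $R$ and $N$; the two three-vertex structures witnessing non-transitivity of $N$ (namely $N(x,y)\wedge N(y,z)\wedge R(x,z)$ and its mirror image); and, when $n$ is finite, each of the finitely many tournaments on $n+1$ vertices. All of these are irreducible (indeed their Gaifman graphs have no cut vertex), so $\mathcal{E}_n$ is a free amalgamation class in a finite relational language; $\hat{\str A}\in\mathcal{E}_n$ for every $n$-partite tournament $\str A$, and the complete members of $\mathcal{E}_n$ are exactly the codes of $n$-partite tournaments. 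By the Herwig--Lascar theorem and its strengthenings to coherent and \emph{faithful} EPPA \cite{herwig2000,otto2017,Hubicka2018EPPA}, $\hat{\str A}$ has a coherent EPPA-witness $\str B'\in\mathcal{E}_n$ which is, in addition, faithful with respect to a prescribed family $\mathcal{M}$ of irreducible $L$-structures: $\str B'$ admits no homomorphism from a member of $\mathcal{M}$ except those that factor through a copy of $\hat{\str A}$.

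I would take $\mathcal{M}$ to be the family of \emph{obstructions to completability}: for each $m\geq 2$, the $N$-path $z_0,\dots,z_m$ together with the single edge $R(z_0,z_m)$ (a ``long $N$-cycle with one $R$-chord''), and, when $n$ is finite, the $L$-structures that become an $(n{+}1)$-chromatically critical graph once each $N$-component is contracted to a point. Each member of $\mathcal{M}$ is irreducible, and $\hat{\str A}$ admits no homomorphism from any of them: following $N$-edges inside $\hat{\str A}$ never leaves a single part, so the $R$-chord of a long $N$-cycle cannot be realised, and a graph of chromatic number exceeding $n$ has no homomorphism into the (at most $n$-chromatic) adjacency graph of $\hat{\str A}$. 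Consequently $\str B'$ admits no homomorphism from any member of $\mathcal{M}$ at all, and this forces $\str B'$ to be completable: no $R$-edge can lie inside an $N$-component of $\str B'$ (an $N$-path joining its endpoints would be a forbidden long cycle), so contracting $N$-components in the adjacency graph of $\str B'$ yields a graph $H$ that contains no $(n{+}1)$-chromatically critical subgraph and is therefore at most $n$-colourable (vacuously so when $n=\omega$); any proper colouring of $H$ partitions $\str B'$ into at most $n$ parts with $N$ inside parts and $R$ across. Completing $R$ across those parts --- orienting every still-undecided cross pair, which the coherent and faithful machinery is designed to allow while preserving the witnessing automorphisms of $\str B'$ --- yields a complete $\str B^{+}\in\mathcal{E}_n$ extending both $\hat{\str A}$ and $\str B'$ with $\Aut(\str B')\subseteq\Aut(\str B^{+})$. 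Forgetting $N$ gives a finite $n$-partite tournament $\str B\supseteq\str A$, and each partial automorphism of $\str A$, being also a partial automorphism of $\hat{\str A}$, extends through $\str B'$ to an automorphism of $\str B$; coherence is preserved throughout.

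The step I expect to be the main obstacle is the treatment of $\mathcal{M}$. The characteristic constraint on $n$-partite tournaments --- that any two parts be joined completely, i.e. transitivity of non-adjacency, and for finite $n$ the bound on chromatic number --- is not witnessed on boundedly many vertices, so $\mathcal{M}$ is genuinely infinite, and one has to verify it is tame enough (irreducible, of suitably bounded complexity) for the faithful Herwig--Lascar machinery to apply and for the completion to remain automorphism-preserving; the chromatic obstructions for finite $n\geq 3$ are the most delicate part. The case $n=\omega$ should be the cleanest, since there the chromatic obstructions disappear and $\mathcal{M}$ consists only of the long $N$-cycles. The remaining verifications --- that $\mathcal{E}_n$ is a free amalgamation class, that its complete members are precisely the $n$-partite tournaments, and that the final completion step is automorphism-preserving --- are routine.
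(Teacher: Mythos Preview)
Your approach has a genuine gap, and it is not where you anticipate. The problematic step is the completion, specifically the clause ``orienting every still-undecided cross pair, which the coherent and faithful machinery is designed to allow while preserving the witnessing automorphisms of $\str B'$.'' That is not what the machinery provides. The Herwig--Lascar and Hubi\v cka--Kone\v cn\'y--Ne\v set\v ril frameworks transfer EPPA along completions that are \emph{canonical}, i.e.\ determined locally by the existing data (shortest-path completion of partial metrics is the paradigmatic example), so that every automorphism of the incomplete structure automatically survives to the completed one. Choosing an orientation for an unordered pair $\{x,y\}$ is never canonical in this sense: if some $\theta\in\Aut(\str B')$ needed to extend a partial automorphism of $\hat{\str A}$ happens to transpose two vertices $x,y\in B'$ lying in distinct $N$-components with no $R$-relation between them, then \emph{no} orientation of $\{x,y\}$ is $\theta$-invariant, and $\theta\notin\Aut(\str B^{+})$. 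Nothing in the faithful-EPPA construction excludes such transpositions; the high symmetry of Herwig--Lascar-style witnesses makes them typical rather than exceptional. (For finite $n$ the choice of a proper $n$-colouring of $H$ is a second non-canonical step with the same defect.) This orientation obstruction is exactly why the paper states in its introduction that EPPA for $n$-partite tournaments does \emph{not} follow from a direct application of~\cite{herwig2000} or its strengthening in~\cite{Hubicka2018EPPA}.

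The paper's proof avoids the issue entirely by never passing through an incomplete structure. It builds the witness $\str B$ explicitly, with vertex set $\{(x,\chi):x\in A,\ \chi\colon N(x)\to\mathbb Z_2\}$, and orients \emph{every} cross-part pair from the outset using the parity $\chi(x')+\chi'(x)$ together with the fixed linear order on $A$; thus $\str B$ is already an $n$-partite tournament and there is nothing to complete. The extensions of partial automorphisms are then assembled by hand from two explicit families $\theta_\pi$ (lifting part-preserving bijections of $A$) and $\theta_{u,v}$ (flipping a single valuation coordinate), with the key point being that the edge-orientation rule is written so that both families are genuine automorphisms of $\str B$. Your encoding via $\{R,N\}$ and the class $\mathcal E_n$ is a reasonable reformulation, but it does not by itself get around the need for this kind of direct construction.
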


\begin{theorem}
\label{thm:semigeneric}
The class of all finite semigeneric tournaments has EPPA.
\end{theorem}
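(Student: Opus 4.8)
The plan is to prove Theorem~\ref{thm:semigeneric} by the strategy behind Theorem~\ref{thm:partite} --- a completion argument fed into an EPPA construction for partial structures --- after enriching the language so that the parity condition becomes local. A finite semigeneric tournament is a finite $\omega$-partite tournament, so Theorem~\ref{thm:partite} already hands us an EPPA-witness; the trouble is that semigenericity is a parity condition not captured by forbidden substructures, so that witness need not be semigeneric. I would begin by recording the local shape of the constraint. With $\epsilon(x,y)=1$ meaning the edge between $x$ and $y$ (in distinct parts) is oriented from $x$ to $y$, a finite $\omega$-partite tournament is semigeneric exactly when, for each ordered pair of distinct parts $P,Q$, the restriction of $\epsilon$ to $P\times Q$ is $\mathbb{Z}/2$-affine, $\epsilon(x,y)=\alpha(x)+\beta(y)+\gamma$; equivalently, for each $a\in P$ the partition of $Q$ into the two fibres of $\epsilon(a,\cdot)\restriction Q$ does not depend on $a$. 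The crucial feature is that this ``halving'' of $Q$ \emph{does} depend on the other part $P$, and there is no constraint relating distinct pairs of parts. Note, however, that ``the edges already present between $P$ and $Q$ extend to an affine function'' is not a local property (the obstruction is an odd cycle of unbounded length in the bipartite graph of specified entries), so to run a completion argument I must first make the halvings \emph{visible} in the structure.

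Accordingly I would work in the language of directed graphs enriched by an equivalence relation $N$ (``same part'') and a ternary relation $T(a,c,c')$, intended to hold when $c,c'$ lie in a common part $Q$, $c\neq c'$, $a\notin Q$, and $\epsilon(a,c)=\epsilon(a,c')$. Let $\mathcal{R}$ be the class of finite \emph{partial} structures in this language: $N$ an equivalence relation, $E$ an orientation of \emph{some} of the pairs in distinct $N$-classes, $T$ a ternary relation on triples of the above syntactic form, subject to a finite list of forbidden configurations expressing that $T$ is symmetric in its last two coordinates, ``transitive'' in them, invariant under replacing $a$ by an $N$-equivalent vertex (the semigenericity axiom promoted to the expanded language), and consistent with $E$ wherever the relevant edges are present. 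Once $T$ is present, the parity of the four edges between two parts is forced by $T$ together with this consistency, so $\mathcal{R}$ is genuinely defined by forbidden homomorphic images on boundedly many vertices plus the single equivalence-relation closure $N$ --- exactly the shape handled by the proof of Theorem~\ref{thm:partite}. A finite semigeneric tournament, equipped with $N$ equal to non-adjacency and the $T$ determined by $\epsilon$, is a member of $\mathcal{R}$ with $E$ total; and the reduct to the edge relation of any member of $\mathcal{R}$ with $E$ total is a finite semigeneric tournament.

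I would then establish two facts. First, \emph{$\mathcal{R}$ has coherent EPPA}, by rerunning the construction used for Theorem~\ref{thm:partite}: the equivalence relation $N$ is treated as the partition was there, the ternary $T$ and the extra forbidden configurations are irreducible and hence respected automatically by the valuation/permutation-group step, and coherence is inherited. Second, that $\mathcal{R}$ admits an \emph{automorphism-preserving completion} to its class of total structures: given $\mathbf{R}\in\mathcal{R}$, one orients the missing edges one part-pair $(P,Q)$ at a time; the relation $T$ already prescribes the two halvings of that block, so the block is forced to be the corresponding affine function up to a single free bit, and --- since distinct part-pairs are mutually unconstrained --- these bits can be fixed coherently across the structure in a way that survives automorphisms, in the sense needed for EPPA to descend from $\mathcal{R}$ to its completion. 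There are no long-cycle obstructions any more precisely because $T$ pins the halvings down in advance.

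Assembling: given a finite semigeneric tournament $\mathbf{A}$, view it as $\mathbf{A}^{+}\in\mathcal{R}$ with $E$ total, take a coherent EPPA-witness $\mathbf{B}^{+}\in\mathcal{R}$ of $\mathbf{A}^{+}$, complete it to $\mathbf{B}'\in\mathcal{R}$ with $E$ total while keeping $\Aut(\mathbf{B}^{+})$ acting, and let $\mathbf{B}$ be the reduct of $\mathbf{B}'$ to the edge relation --- a finite semigeneric tournament containing $\mathbf{A}$, since the completion does not touch the edges of $\mathbf{A}^{+}$, which are all present. Every partial automorphism of $\mathbf{A}$ is a partial automorphism of $\mathbf{A}^{+}$: on any subset, both $N$ and the value of $T$ on a triple $(a,c,c')$ --- namely whether $\epsilon(a,c)=\epsilon(a,c')$ --- are determined by the restriction of $E$. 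Hence it extends to an automorphism of $\mathbf{B}^{+}$, thus of $\mathbf{B}'$, whose restriction to the reduct extends it; coherence is preserved throughout. I expect the main obstacle to be the second of the two facts --- carrying out the part-pair-by-part-pair completion in a genuinely automorphism-preserving way (possibly after a further rigidifying expansion) --- together with checking that the Theorem~\ref{thm:partite} machinery really does tolerate the ternary relation $T$ and the parity-enforcing configurations alongside the equivalence-relation closure. Pinning down the exact finite list of forbidden configurations defining $\mathcal{R}$, large enough that every member completes yet small enough that no non-semigeneric structure slips through, is the remaining technical point.
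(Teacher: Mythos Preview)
Your plan has a genuine gap at the step ``$\mathcal R$ has EPPA by rerunning the construction used for Theorem~\ref{thm:partite}''. That construction (and, more generally, the Herwig--Lascar/Hubi\v cka--Kone\v cn\'y--Ne\v set\v ril machinery when an equivalence relation is present) keeps the number of $N$-classes fixed: the witness has parts $B_i=\{(x,\chi):x\in A_i\}$, one for each part of $\str A$. But Example~\ref{ex:semig_bad} in the paper shows that this is impossible already for your expanded class $\mathcal R$, not just for completed semigeneric tournaments.

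Concretely, take $\str A$ with parts $A_1=I\cup O$ and $A_2=\{v\}$, $|I|=|O|=n$, edges from $v$ to $I$ and from $O$ to $v$. Pick $S\subsetneq I$ nonempty and let $\varphi\colon A_1\to A_1$ fix $S$ pointwise and send $I\setminus S$ into $O$. In your expanded language, $\dom(\varphi)=A_1$ is a single $N$-class carrying no $E$-edges and no $T$-triples (every $T(a,c,c')$ needs $a$ in a different part), so $\varphi$ is a partial automorphism of $\str A^{+}$. Suppose $\str B^{+}\in\mathcal R$ is an EPPA-witness with the \emph{same} two parts $B_1\supseteq A_1$, $B_2\ni v$. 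Since $\varphi$ fixes $A_1$ setwise, any extending automorphism $\theta$ fixes $B_1$ (hence $B_2$) setwise. Now $T$ in $\str B^{+}$, restricted to triples $(b,c,c')$ with $b\in B_2$, $c,c'\in B_1$, gives a partition of $B_1$ into two halves which on $A_1$ restricts to $\{I,O\}$. As $\theta$ preserves $T$ and fixes $B_2$ setwise, it must either fix each half or swap the two halves; on $A_1$ this forces $\varphi$ either to preserve $\{I,O\}$ or to swap them, and our $\varphi$ does neither. So no two-part witness exists in $\mathcal R$, and ``rerun Theorem~\ref{thm:partite}'' cannot produce one. This is exactly why the paper's construction introduces \emph{part valuation functions} $h\colon A\to\mathbb Z_2$ to blow up the number of parts exponentially, together with a third family of automorphisms $\theta_{a,v}$ that genuinely move parts; these ingredients are the missing idea in your plan, and the paper presents Example~\ref{ex:semig_bad} precisely to motivate them.

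A secondary point: even granting an EPPA-witness for $\mathcal R$, your completion step (``a single free bit per part-pair, fixed coherently across the structure in a way that survives automorphisms'') is another instance of the orientation problem. An automorphism can swap the two $T$-halves on one side of a part-pair while fixing the other side, which flips that bit; choosing all bits $\Aut(\str B^{+})$-equivariantly is then a nontrivial $\mathbb Z_2$-cocycle problem of the same flavour as orienting a tournament under a group action. You flag this as the likely obstacle, and it is, but the earlier step already fails.
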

These theorems have been announced in a Eurocomb extended abstract~\cite{HubickaSemigenericAMUC}.

By Proposition~6.4 of~\cite{Kechris2007}, EPPA for a class $\mathcal C$ is equivalent to the automorphism group of the \Fraisse{} limit of $\mathcal C$ being the closure of a chain of compact subgroups. It is well-known (see e.g. Proposition~G.2.2 of~\cite{Bekka2008}) that this implies amenability of the group, hence we get the following corollary which was first proved by Pawliuk and Soki\'c by different methods:
\begin{corollary}[\cite{PawliukSokic16}]
Let $G$ be the automorphism group of the \Fraisse{} limit of $\mathcal C$ for $\mathcal C$ either the class of all finite $n$-partite tournaments, $n\in\{2,3,\ldots,\omega\}$, or the class of all finite semigeneric tournaments. Then $G$ is amenable.
\end{corollary}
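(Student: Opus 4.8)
The plan is to deduce this immediately from the two main theorems together with the combinatorial-to-dynamical dictionary recalled just above. First I would note that each class $\mathcal{C}$ under consideration --- finite $n$-partite tournaments for $n\in\{2,3,\ldots,\omega\}$, and finite semigeneric tournaments --- is a \Fraisse{} class: the hereditary and joint embedding properties are trivial, and amalgamation is routine to check directly (for $n$-partite tournaments one uses that non-adjacency is an equivalence relation with at most $n$ classes, amalgamating freely over the common part and identifying the ``new'' parts of the two factors with one another; for semigeneric tournaments it is classical). Hence the \Fraisse{} limit exists and its automorphism group $G$ is well-defined.

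Next I would apply Theorems~\ref{thm:partite} and~\ref{thm:semigeneric}, which state that such a $\mathcal{C}$ has EPPA, and then Proposition~6.4 of~\cite{Kechris2007}: EPPA for $\mathcal{C}$ is equivalent to $G$ being the closure of an increasing chain of compact subgroups, the compact subgroups being read off from EPPA-witnesses of an exhaustion of the \Fraisse{} limit by finite substructures. Finally I would invoke the standard fact (Proposition~G.2.2 of~\cite{Bekka2008}) that the closure of a directed union of compact --- hence amenable --- subgroups is amenable: given a continuous action of $G$ on a compact Hausdorff space, a weak-$*$ cluster point of the invariant probability measures supplied by the compact subgroups is invariant under the dense union of those subgroups, hence under all of $G$ by continuity.

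I do not expect any genuine obstacle here beyond Theorems~\ref{thm:partite} and~\ref{thm:semigeneric} themselves. The only points needing (routine) attention are checking that the two classes really are \Fraisse{} classes, so that $G$ makes sense, and the bookkeeping verification that the form of EPPA established in this paper --- EPPA-witnesses again lying in $\mathcal{C}$ --- is exactly the hypothesis under which the Kechris--Rosendal criterion is stated.
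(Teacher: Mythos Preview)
Your proposal is correct and follows exactly the route the paper takes: invoke Theorems~\ref{thm:partite} and~\ref{thm:semigeneric} for EPPA, apply Proposition~6.4 of~\cite{Kechris2007} to get that $G$ is the closure of a chain of compact subgroups, and then cite Proposition~G.2.2 of~\cite{Bekka2008} for amenability. The paper in fact states this argument in a single sentence preceding the corollary rather than as a formal proof, so your additional remarks on verifying the \Fraisse{} axioms and sketching the weak-$*$ compactness argument are extra detail but not a different approach.
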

Here, a topological group $G$ is \emph{amenable} if every $G$-flow admits a left-invariant probability measure.

\begin{definition}[\cite{Truss1992,hodges1993b,Kechris2007}]\label{defn:ample}
Let $\str M$ be a countable structure and let $n\geq 1$ be an integer. We say that $\str M$ has \emph{$n$-generic automorphisms} if $G = \Aut(\str M)$ has a comeagre orbit on $G^n$ in its action by diagonal conjugation. We say that $\str M$ has \emph{ample generics} if it has $n$-generic automorphisms for every $n\geq 1$.
\end{definition}
Here, the action by diagonal conjugation is defined by $$g\cdot (h_1,\ldots,h_n) = (gh_1g^{-1},\ldots,gh_ng^{-1}).$$

In order to prove the small index property for the random graph, Hodges, Hodkinson, Lascar, and Shelah actually proved that the random graph has ample generics and that this fact implies the small index property. Kechris and Rosendal~\cite{Kechris2007} subsequently extracted an equivalent combinatorial condition for ample generics, streamlined the arguments and proved several more consequences of ample generics. See Section~\ref{sec:ample:background} for more details.

We prove:
\begin{theorem}
\label{thm:npart_ample}
The \Fraisse{} limit of the class of all finite $\omega$-partite tournaments has ample generics.

For every $n\in \{2,3,\ldots\}$, the \Fraisse{} limit of the class of all finite $n$-partite tournaments does not have 1-generic automorphisms. In fact, its automorphism group does not even have a dense conjugacy class.
\end{theorem}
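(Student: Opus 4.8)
We treat the two parts in turn.

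\emph{Ample generics of the $\omega$-partite case.} We plan to verify the criterion of Kechris and Rosendal~\cite{Kechris2007} recalled in Section~\ref{sec:ample:background}: it suffices to show that for every $n\ge 1$ the class $\mathcal K_n$ whose objects are finite $\omega$-partite tournaments equipped with an $n$-tuple of partial automorphisms, and whose morphisms are the embeddings respecting all $n$ of them, has the joint embedding property and the weak amalgamation property (for every object $\str S$ there is an extension $\str T$ such that any two extensions of $\str T$ have a common extension amalgamating them over $\str S$). The joint embedding property is immediate: given two objects, take the underlying directed graphs disjoint, place every part of either structure into its own part of the amalgam, and orient all edges between the two structures from the first to the second; then non-adjacency stays transitive and the partial automorphisms, whose domains and ranges remain within their respective structures, stay partial automorphisms.

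\emph{Weak amalgamation.} Fix $\str A$ with partial automorphisms $p_1,\dots,p_n$; by Theorem~\ref{thm:partite} let $\str B\supseteq\str A$ be an EPPA-witness of $\str A$ and choose $\widehat p_i\in\Aut(\str B)$ with $\widehat p_i\supseteq p_i$. We claim $(\str B,\widehat p_1,\dots,\widehat p_n)$ is a weak amalgamation witness of $(\str A,p_1,\dots,p_n)$. Let $(\str C_1,r^1_1,\dots,r^1_n)$ and $(\str C_2,r^2_1,\dots,r^2_n)$ be two extensions of it. Since each $\widehat p_i$ is total, every $r^j_i$ restricts to $\widehat p_i$ on $\str B$; applying Theorem~\ref{thm:partite} again to $\str C_1$ and to $\str C_2$, and using that a single EPPA-witness extends all partial automorphisms, we may enlarge the $\str C_j$ so that each $r^j_i$ is an automorphism of $\str C_j$ still restricting to $\widehat p_i$ on $\str B$, and after renaming so that $\str C_1\cap\str C_2=\str B$. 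Now amalgamate $\str C_1$ and $\str C_2$ over $\str B$ by the canonical construction: glue along $\str B$ those parts of $\str C_1$ and of $\str C_2$ which meet $\str B$, keep all remaining parts pairwise distinct, and orient the edges between $\str C_1\setminus\str B$ and $\str C_2\setminus\str B$ by any function constant on the orbits of the group generated by $(r^1_1,r^2_1),\dots,(r^1_n,r^2_n)$ acting diagonally on the set of adjacent pairs with first coordinate in $\str C_1$ and second in $\str C_2$; such a function exists since this action preserves that set of pairs while no orbit contains a pair together with its reverse. The resulting $\str D$ is a finite $\omega$-partite tournament, each $r^1_i\cup r^2_i$ is an automorphism of $\str D$, and $(\str D,r^1_1\cup r^2_1,\dots,r^1_n\cup r^2_n)$ amalgamates the two given objects over $(\str A,p_1,\dots,p_n)$. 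This proves the weak amalgamation property, hence ample generics. We expect this amalgamation step to be the main obstacle: since $\omega$-partite tournaments do not admit free amalgamation one cannot simply omit the edges between the two copies; instead, after turning the partial automorphisms into automorphisms by two applications of Theorem~\ref{thm:partite}, these edges are oriented equivariantly, which works precisely because an edge and its reverse never lie in a common orbit.

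\emph{No dense conjugacy class for finite $n$.} Let $\str M$ be the \Fraisse{} limit of the finite $n$-partite tournaments. On $\str M$ the relation ``$x=y$ or $x$ and $y$ are non-adjacent'' is an equivalence relation whose classes are exactly the $n$ parts, each infinite. Every automorphism of $\str M$ permutes these classes, giving a homomorphism $\pi\colon\Aut(\str M)\to S_n$; it is continuous because $\pi(g)$ is determined by the images under $g$ of one fixed vertex from each part. By homogeneity $\Aut(\str M)$ is transitive on vertices, and as $n\ge 2$ there are vertices in distinct parts, so some automorphism maps one to the other and thereby moves a part; hence $\pi$ is nontrivial. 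Suppose $\Aut(\str M)$ had a dense conjugacy class $C$, and put $H=\pi(\Aut(\str M))\le S_n$, a finite --- hence discrete --- nontrivial group. As $\pi$ maps $\Aut(\str M)$ onto $H$, the set $\pi(C)$ is a conjugacy class of $H$; being the image of a dense set it is dense in $H$, so $\pi(C)=H$. Thus $H$ is a single conjugacy class containing the identity, forcing $H$ to be trivial --- a contradiction. Since a comeagre orbit is in particular a dense conjugacy class, $\str M$ has no $1$-generic automorphisms. In contrast with the first part, this argument is entirely soft, using nothing but the canonical homomorphism onto a finite symmetric group.
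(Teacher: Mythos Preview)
Your proof is correct. For the $\omega$-partite case you follow the same route as the paper (EPPA plus an APA-style amalgamation, via the Kechris--Rosendal criterion), only doing inline what the paper packages as Lemma~\ref{lem:omegapart_apa} and Proposition~\ref{prop:ample_apa}; note that the paper simply orients all new edges from $\str C_1\setminus\str B$ to $\str C_2\setminus\str B$, which is already invariant under every pair $(r^1_i,r^2_i)$, so your more general orbit-constant recipe, while fine, is more than is needed.

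For the finite-$n$ negative result your argument is genuinely different from the paper's. The paper exhibits two explicit $1$-systems $(\str B,\mathrm{id})$ and $(\str B,f)$, where $\str B$ is built from an oriented $4$-cycle and $f$ rotates it, and checks directly that they have no joint embedding (any extension of the first fixes every part, any extension of the second does not); it then quotes the Kechris--Rosendal equivalence between JEP for $1$-systems and the existence of a dense conjugacy class. Your argument via the continuous surjection onto a nontrivial finite quotient $H\le S_n$, together with the observation that the image of a dense conjugacy class would be a single conjugacy class equal to all of $H$, is softer and more portable: it applies verbatim to any homogeneous structure whose automorphism group admits a nontrivial continuous finite quotient. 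The paper's approach, on the other hand, makes the concrete combinatorial obstruction visible at the level of finite configurations and ties it explicitly to the failure of JEP in $\mathcal C^1$.
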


\begin{theorem}
\label{thm:semig_ample}
The \Fraisse{} limit of the class of all finite semigeneric tournaments has ample generics.
\end{theorem}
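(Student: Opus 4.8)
The strategy parallels the $\omega$-partite case of Theorem~\ref{thm:npart_ample}: reduce ample generics to a combinatorial amalgamation statement and then verify it, the only essential extra complication being the parity condition. Write $\mathcal C$ for the class of all finite semigeneric tournaments and $\str M$ for its \Fraisse{} limit. By the Kechris--Rosendal criterion recalled in Section~\ref{sec:ample:background} (see~\cite{Kechris2007}), it suffices to show that for every $n\geq 1$ the class $\mathcal C_n$ --- whose objects are pairs $(\str A,(p_1,\dots,p_n))$ with $\str A\in\mathcal C$ and each $p_i$ a partial automorphism of $\str A$, and whose morphisms are the embeddings of $\str A$ that intertwine the $p_i$ --- has the joint embedding property and the weak amalgamation property.

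The weak amalgamation property is the heart of the matter. Given $(\str A,\bar p)\in\mathcal C_n$, I would use EPPA for $\mathcal C$ (Theorem~\ref{thm:semigeneric}) to obtain an EPPA-witness $\str B\supseteq\str A$ together with automorphisms $\theta_i\in\Aut(\str B)$ extending $p_i$, and take $(\str B,\bar\theta)$ as the required enlargement of $(\str A,\bar p)$. It then has to be shown that any two extensions $(\str C_1,\bar q^1)$ and $(\str C_2,\bar q^2)$ of $(\str B,\bar\theta)$ in $\mathcal C_n$, which we may assume satisfy $\str C_1\cap\str C_2=\str B$, admit a common extension in $\mathcal C_n$ with the two embeddings agreeing on $\str B$, hence on $\str A$. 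The point of passing to $\str B$ first is that each $\theta_i$ is already total on $\str B$, which forces each $q^j_i$ to send vertices of $\str C_j$ lying outside $\str B$ to vertices lying outside $\str B$; consequently $q^1_i\cup q^2_i$ is automatically a well-defined injective partial map that preserves the edges inside $\str C_1$ and the edges inside $\str C_2$. So what remains is to build a semigeneric amalgam $\str D$ of $\str C_1$ and $\str C_2$ over $\str B$ in which every $q^1_i\cup q^2_i$ is edge-preserving across the two sides; then $\str D$ together with the partial automorphisms $q^1_i\cup q^2_i$, for $1\leq i\leq n$, is the desired common extension. The joint embedding property is obtained in the same way over the empty structure, placing the two sides in pairwise distinct parts and orienting the connecting edges by a constant --- hence affine, hence semigeneric --- pattern.

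The construction of $\str D$ is where the parity condition bites, and I expect this to be the main obstacle. One fixes the parts of $\str D$ by merging the parts of $\str C_1$ and of $\str C_2$ along $\str B$ while keeping the genuinely new parts of the two sides apart; this determines which cross pairs are non-adjacent --- exactly the forced ones --- and that choice is visibly invariant under the $q^j_i$. For the adjacent cross pairs one must choose orientations so that, between any two parts of $\str D$, the resulting pattern is affine, i.e.\ of the form $f(a)+g(c)$ over $\mathbb Z/2$, which is precisely the semigeneric condition, and so that the maps $q^1_i\cup q^2_i$ remain edge-preserving. Between two parts that both meet $\str B$ the affine pattern turns out to be \emph{uniquely forced} by the patterns already present inside $\str C_1$ and inside $\str C_2$ --- an affine pattern being determined by one affine factor on each side, normalised against the pattern inside $\str B$ --- so its canonicity, and hence its invariance under the $q^j_i$, is automatic; between a new part of one side and a new part of the other the pattern is unconstrained and may be taken constant; and the remaining mixed case is resolved by a similarly uniform rule. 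Verifying that $\str D$ is genuinely semigeneric, and that the relevant normalisation constants cancel modulo $2$ so that the maps $q^1_i\cup q^2_i$ preserve the cross patterns, is the substantive part of the argument; it is the same kind of bookkeeping with $\mathbb Z/2$-valued affine patterns that underlies the proof of Theorem~\ref{thm:semigeneric}, and I would reuse the affine-completion lemmas established there.
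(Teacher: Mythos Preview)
Your overall strategy is the paper's: invoke Kechris--Rosendal, replace the initial system by an EPPA-witness $\str B$ on which the partial automorphisms become total, and then perform an amalgamation of $\str C_1$ and $\str C_2$ over $\str B$ that is equivariant for the given maps. The gap is precisely where you wave your hands: the ``remaining mixed case'' in which a part $X$ of the amalgam meets $\str B$ but the other part $Y$ lies entirely in $C_2\setminus B$ (say). You assert this ``is resolved by a similarly uniform rule'' and appeal to ``affine-completion lemmas'' from Section~\ref{sec:semig}, but no such lemmas exist there, and the rule is not at all obvious. Concretely, for $x\in X\cap(C_1\setminus B)$ and $y\in Y$, the semigeneric condition only forces the edge $xy$ up to a single free bit per pair $(x,Y)$, and choosing these bits in a way that is simultaneously semigeneric and invariant under all $q^1_i\cup q^2_i$ requires a \emph{canonical} assignment of $x$ to one of the two $\sim_{X\cap B,Y}^{\str C_2}$-classes. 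A priori there is no such assignment.

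The paper's fix is to restrict the class of allowed bases $\str B$. It introduces the notions of \emph{saturated} and \emph{twinless} semigeneric tournaments (via the equivalences $\sim^{\str A}_{X,Y}$ of Observation~\ref{obs:semig_equivalences_pair}), proves in Proposition~\ref{prop:ample_twinless} that every finite semigeneric tournament has a saturated twinless EPPA-witness (by quotienting the witness of Section~\ref{sec:semig} by $\sim$), and then proves APA only over saturated twinless bases (Lemma~\ref{lem:semigeneric_apa}). Saturation guarantees that every $x\in X\cap(C_1\setminus B)$ is $\sim^{\str C_1'}$-equivalent to some $a\in X\cap B$, twinlessness makes this $a$ unique, and one then copies the orientation of $xy$ from $ay$; this rule is visibly equivariant because $\sim$ is definable (Lemma~\ref{lem:sem_autom_sim}). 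Without these hypotheses the authors do not know how to carry out the construction --- indeed, in Section~\ref{sec:conclusion} they explicitly conjecture that semigeneric tournaments do \emph{not} have APA over arbitrary bases. So your proposal is missing a genuine idea, not just bookkeeping: you must either upgrade your EPPA-witness to be saturated and twinless before amalgamating, or supply a different equivariant rule for the mixed case and justify it.
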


By~\cite{Kechris2007}, Theorems~\ref{thm:npart_ample} and~\ref{thm:semig_ample} immediately give us the following corollary (for definitions see Chapter~1.6 of~\cite{Kechris2007}):
\begin{corollary}
Let $G$ be the automorphism group of the \Fraisse{} limit of $\mathcal C$ for $\mathcal C$ either the class of all finite $\omega$-partite tournaments, or the class of all finite semigeneric tournaments. Then:
\begin{enumerate}
  \item $G$ has the small index property.
  \item $G$ has uncountable cofinality.
  \item $G$ has properties (FA) and (FH).
  \item $G$ has the 21-Bergman property.
\end{enumerate}
\end{corollary}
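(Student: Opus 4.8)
The plan is to read off all four items formally from the consequences of ample genericity established by Kechris and Rosendal~\cite{Kechris2007}, feeding in Theorems~\ref{thm:npart_ample} and~\ref{thm:semig_ample}. Fix $\mathcal C$ to be either the class of all finite $\omega$-partite tournaments or the class of all finite semigeneric tournaments, let $\str M$ be its \Fraisse{} limit, and put $G=\Aut(\str M)$. Since $\mathcal C$ consists of finite structures in a finite relational language, $\str M$ is countable and $G$ is a closed subgroup of $S_\infty$; moreover $G$ is oligomorphic, because the orbit of an $n$-tuple of vertices of $\str M$ is determined by the isomorphism type of the finite directed graph it induces. By Theorems~\ref{thm:npart_ample} and~\ref{thm:semig_ample}, $G$ has ample generics, so $G$ is of the kind treated in~\cite[Section~1.6]{Kechris2007}, and all that is left is bookkeeping.

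Item~(1) is the theorem of~\cite{Kechris2007} that a closed subgroup of $S_\infty$ with ample generics has the small index property (generalising the result of Hodges, Hodkinson, Lascar and Shelah for the random graph); the same circle of ideas also gives automatic continuity of $G$. Item~(4) is the statement of~\cite[Section~1.6]{Kechris2007} that an oligomorphic closed subgroup of $S_\infty$ with ample generics has the $21$-Bergman property, i.e. $(S\cup S^{-1}\cup\{1\})^{21}=G$ for every generating set $S$, the explicit constant being the one extracted there along the lines of Bergman's argument for $S_\infty$. Items~(2) and~(3) now follow formally. If $G$ were the union of an increasing chain of proper subgroups $G_n$, then the sets $W_n=G_n$ would contradict the Bergman property, so $G$ has uncountable cofinality. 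For any affine isometric action of $G$ on a Hilbert space and any vector $v$, the displacement $g\mapsto\lVert g\cdot v-v\rVert$ is a length function on $G$, hence bounded, so the orbit of $v$ is bounded and $G$ fixes its circumcentre, giving property~(FH). Similarly, for any action of $G$ on a tree without inversions, the displacement from a fixed vertex is a bounded length function, so all orbits are bounded, $G$ preserves a bounded invariant subtree, and $G$ fixes its centre, giving property~(FA).

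I expect the only thing needing care to be matching $G$ against the exact hypotheses under which~\cite{Kechris2007} phrases these results — closed in $S_\infty$, oligomorphic, with ample generics, and, in some formulations, not a countable union of proper open subgroups, which is routine for a \Fraisse{} limit as homogeneous as $\str M$. There is no real obstacle here; the mathematical content lies in Theorems~\ref{thm:npart_ample} and~\ref{thm:semig_ample} and in~\cite[Section~1.6]{Kechris2007}.
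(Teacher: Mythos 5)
Your proposal matches the paper's treatment exactly: the paper gives no argument beyond observing that Theorems~\ref{thm:npart_ample} and~\ref{thm:semig_ample} combined with the consequences of ample generics in Section~1.6 of~\cite{Kechris2007} yield all four items, which is precisely your route. The one caveat is that your generating-set rendering of the $21$-Bergman property is weaker than the exhaustive-chain formulation actually proved by Kechris--Rosendal, and it is the chain form that your deductions of uncountable cofinality and of bounded length functions (hence (FA) and (FH)) genuinely rely on.
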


\subsection{Homogeneous structures}
One can show that if a hereditary isomorphism-closed class $\mathcal C$ of finite relational structures has countably many members up to isomorphism, the \emph{joint embedding property} (that is, for every $\str A, \str B \in \mathcal C$ there is $\str C\in \mathcal C$ which embeds both $\str A$ and $\str B$), and EPPA then it has the \emph{amalgamation property} (that is, for every $\str A, \str B_1, \str B_2\in \mathcal C$ and embeddings $\alpha_i\colon \str A \to \str B_i$ for $i\in \{1,2\}$, there is $\str C \in \mathcal C$ with embeddings $\beta_i\colon \str B_i \to \str C$ for $i\in \{1,2\}$ such that $\beta_1\alpha_1 = \beta_2\alpha_2$), see for example~\cite[Proposition 7.1]{hubicka2025twenty}.

Given a structure $\str M$, its \emph{age}, denoted by $\age(\str M)$, is the class of all finite structures which embed to $\str M$. By the \Fraisse{} theorem~\cite{Fraisse1953,Fraisse1986}, the age of every homogeneous structure has the joint embedding property and the amalgamation property, and conversely, a hereditary isomorphism-closed class with the joint embedding property, the amalgamation property, and countably many members up to isomorphism is the age of a countable homogeneous structure, its \emph{\Fraisse{} limit}, which is unique up to isomorphism. Here, a structure $\str M$ is \emph{homogeneous} if every isomorphism between finite substructures of $\str M$ extends to an automorphism of $\str M$ (so, in a way, it is an EPPA-witness for itself). This restricts the candidate classes for EPPA to those provided by the classification programme of homogeneous structures (see e.g.~\cite{Lachlan1980,Lachlan1984,lachlan1984countable,Cherlin1998,Cherlin1999,Cherlin2013}). 

\medskip

Semigeneric tournaments and $n$-partite tournaments appear in Cherlin's classification
of countable homogeneous directed graphs~\cite{Cherlin1998} and are one of the few known examples of amalgamation classes having EPPA where this property does not follow by a direct application of the Herwig--Lascar theorem~\cite{herwig2000}, or more generally, its strengthening by Hubička, Konečný and Nešetřil~\cite{Hubicka2018EPPA} (in addition to two-graphs~\cite{eppatwographs}, finite permutation groups~\cite{Siniora2}, and certain antipodal metrically homogeneous graphs~\cite{Konecny2019a}).  As discussed in~\cite{PawliukSokic16}, Theorems~\ref{thm:partite} and~\ref{thm:semigeneric} imply that in order to fully classify which homogeneous directed graphs have EPPA, it now remains to decide EPPA for the class of all finite tournaments, the class of all finite
directed graphs omitting an independent set of size $k$ ($k\geq 2$), and the class of all finite double covers of tournaments.

In particular, EPPA for tournaments is a long standing open problem with important connections to group theory which was posed in 2000 by Herwig and Lascar~\cite{herwig2000}, see also~\cite{Sabok} for some recent progress on this question. We identify a weakening of the questions whether tournaments or directed graphs without large independent sets have EPPA:

\begin{question}\label{q:tournaments}
For which $k\geq 2$ is there $\ell$ such that for every directed graph $\str A$ which contains no independent set of size $k$ there is a directed graph $\str B$ which contains no independent set of size $\ell$ such that $\str A\subseteq \str B$ and every partial automorphism of $\str A$ extends to an automorphism of $\str B$.
\end{question}
Note that for $\ell = k$ this is simply EPPA for directed graphs with no independent set of size $k$. In particular, for $k=2$ these are tournaments. While we seem to encounter the same obstacles when trying to adapt the existing methods to answer Question~\ref{q:tournaments} as when trying to prove EPPA for tournaments, we believe that this weakening might be more approachable and more robust, as it seems to non-trivially relax the group-theoretical constraints. For example, tournaments only have automorphisms of odd order, but this is no longer the case for directed graph without large independent sets.

\section{Warm-up: EPPA for graphs}\label{sec:graphs}
Our constructions of EPPA-witnesses for $n$-partite tournaments and semigeneric tournaments will have very similar structure: Given a finite structure $\str A$, we will first construct a finite structure $\str B$ and give an embedding $\psi\colon\str A\to \str B$. Our goal will be to eventually prove that $\str B$ is an EPPA-witness for $\psi(\str A)$. Towards this, we first define some families of nice automorphisms of $\str B$ which we will later use in extending partial automorphisms of $\str A$. Next, given a partial automorphism of $\str A$, we will show which nice automorphisms of $\str B$ we need to compose in order to extend it, and we prove that our constructions indeed work.

To help the reader familiarize themselves with this structure of the proof, we start with a proof of EPPA for graphs. This was first proved by Hrushovski~\cite{hrushovski1992}, here we show (a slightly different presentation of) the proof by Hubička, Konečný, and Nešetřil from~\cite{Hubicka2018EPPA}.

Fix a graph $\str{A}$ with vertex set $A=\{1,\ldots, n\}$. 

\subsection{Witness construction}
Given vertex $x\in A$, we will call every function $\chi\colon A\setminus\{x\}\to \mathbb Z_2$ a \emph{valuation function for $x$}. Note that operations with valuation functions will be performed in $\mathbb Z_2$. The structure $\str B$ will be constructed as follows:

\begin{enumerate}
 \item The vertex set $B$ consists of all pairs $(x,\chi)$ where $x\in A$ and $\chi\colon A\setminus\{x\}\to \mathbb Z_2$ is a valuation function for $x$, and
 \item $(x,\chi)$ and $(x',\chi')$ are adjacent if and only if $x\neq x'$ and $\chi(x') + \chi'(x) = 0$ (recall that the addition is performed in $\mathbb Z_2$).
\end{enumerate}

Next we construct an embedding $\psi\colon \str A\to \str B$, putting $\psi(x) = (x,\chi_x)$ where $\chi_x\colon A\setminus\{x\} \to \mathbb Z_2$ satisfies
$\chi_x(y)=1$ if $x > y$ and $\{x,y\}\notin E_\str{A}$ and $\chi_x(y)=0$ otherwise (remember that we enumerated $A=\{1,\ldots, n\}$).
It is easy to verify that $\psi$ is indeed an embedding of $\str A$ into $\str B$.
Put $\str A' = \psi(\str A)$. This is the copy of $\str A$ in $\str B$ whose partial automorphisms we will extend.

\subsection{Automorphisms of $\str B$}
We now define two families of automorphisms of $\str B$, denoted by $\theta_\pi$ and $\theta_{u,v}$ respectively, which we will later use to extend partial automorphisms.

Let $\pi\colon A\to A$ be a bijection. Define $\theta_\pi\colon B\to B$ such that $\theta_\pi((x,\chi)) = (\pi(x),\chi^\pi)$ where $\chi^\pi\colon A\setminus\{\pi(x)\}\to \mathbb Z_2$ is the vertex valuation function for $\pi(x)$ satisfying $\chi^\pi(\pi(y)) = \chi(y)$.
\begin{lemma}
For every bijection $\pi\colon A\to A$, $\theta_\pi$ is an automorphism of $\str B$.
\end{lemma}
\begin{proof}
Clearly, $\theta_\pi$ is a bijection, as $\theta_\pi^{-1} = \theta_{\pi^{-1}}$. Note that for every $(x,\chi),(y,\xi) \in B$ we have
$$\chi^\pi(\pi(y)) + \xi^\pi(\xi(x)) = \chi(y) + \xi(x).$$
Consequently, it is indeed true that $(x,\chi),(y,\xi)$ is an edge of $\str B$ if and only if $\theta_\pi((x,\chi)),\theta_\pi((y,\xi))$ is.
\end{proof}

Given $u < v\in A$, define $\theta_{u,v}\colon B\to B$ such that $\theta_{u,v}((x,\chi)) = (x,\chi^{u,v})$ where $\chi^{u,v}\colon A\setminus\{x\}\to \mathbb Z_2$ is the vertex valuation function for $x$ satisfying 
$$\chi^{u,v}(y) = \begin{cases}
1+\chi(y) & \text{ if } \{x,y\} = \{u,v\}\\
\chi(y) & \text{ otherwise}.
\end{cases}$$
\begin{lemma}
For every pair of vertices $u < v\in A$ it holds that $\theta_{u,v}$ is an automorphism of $\str B$.
\end{lemma}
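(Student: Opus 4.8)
The plan is to mimic the proof just given for $\theta_\pi$: first argue that $\theta_{u,v}$ is a bijection of $B$, and then check that it preserves the adjacency relation in both directions, so that together these make $\theta_{u,v}$ an automorphism of $\str B$.

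For bijectivity, the cleanest route is to observe that $\theta_{u,v}$ is an involution. Applying it twice to a vertex $(x,\chi)$ fixes the first coordinate, and in the second coordinate it adds $1$ to the value at $\{u,v\}$ twice whenever $x\in\{u,v\}$ (and does nothing otherwise); since $1+1=0$ in $\mathbb Z_2$, this returns $\chi$. Hence $\theta_{u,v}\circ\theta_{u,v}=\id$, so $\theta_{u,v}$ is its own inverse and in particular a bijection.

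For edge-preservation, I would fix two vertices $(x,\chi),(y,\xi)\in B$. If $x=y$ there is nothing to check, since $\theta_{u,v}$ preserves first coordinates and two vertices sharing a first coordinate are never adjacent. If $x\neq y$, the adjacency of the images $(x,\chi^{u,v}),(y,\xi^{u,v})$ is governed by the quantity $\chi^{u,v}(y)+\xi^{u,v}(x)$, so the whole claim reduces to the identity
$$\chi^{u,v}(y)+\xi^{u,v}(x)=\chi(y)+\xi(x).$$
This is verified by the case distinction that is the heart of the argument: if $\{x,y\}\neq\{u,v\}$ then neither valuation is altered at the relevant point and the identity is trivial; if $\{x,y\}=\{u,v\}$ then $\chi^{u,v}(y)=1+\chi(y)$ and $\xi^{u,v}(x)=1+\xi(x)$, so the two added $1$'s cancel in $\mathbb Z_2$. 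Since adjacency is thus preserved and reflected, and $\theta_{u,v}$ is a bijection, it is an automorphism of $\str B$.

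I do not anticipate any genuine obstacle here; the only point requiring care is the bookkeeping in $\mathbb Z_2$ — recognising that whenever the edge $\{x,y\}$ equals $\{u,v\}$ the flip is applied exactly once to $\chi$ (at $y$) and once to $\xi$ (at $x$), so the two modifications cancel and the edge relation is untouched.
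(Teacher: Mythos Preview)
Your argument is correct and follows essentially the same route as the paper: you observe that $\theta_{u,v}$ is an involution (hence a bijection) and then verify $\chi^{u,v}(y)+\xi^{u,v}(x)=\chi(y)+\xi(x)$ by the same case split on whether $\{x,y\}=\{u,v\}$. There is nothing to add.
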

\begin{proof}
Since $\theta_{u,v}^{-1} = \theta_{u,v}$, it is a bijection. Let $(x,\chi)$ and $(y,\xi)$ be vertices of $\str B$. If $\{x,y\} = \{u,v\}$ then
$$\chi^{u,v}(y) + \xi^{u,v}(x) = 1+\chi(y) + 1+\xi(x) = \chi(y)+\xi(x),$$
and if $\{x,y\} \neq \{u,v\}$ then
$$\chi^{u,v}(y) + \xi^{u,v}(x) = \chi(y)+\xi(x).$$
Consequently, $(x,\chi),(y,\xi)$ is an edge of $\str B$ if and only if $\theta_{u,v}((x,\chi)),\theta_{u,v}((y,\xi))$ is, and so $\theta_{u,v}$ is an automorphism of $\str B$.
\end{proof}
\begin{observation}\label{obs:graph:com}
For every quadruple $u,v,w,x\in A$ for which $\theta_{u,v}$ and $\theta_{w,x}$ are defined it holds that $\theta_{u,v}\theta_{w,x} = \theta_{w,x}\theta_{u,v}$. \qed
\end{observation}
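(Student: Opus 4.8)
The plan is to verify the identity pointwise on $B$, reducing it to commutativity of addition in $\mathbb Z_2$. Fix a vertex $(y,\chi)\in B$. By definition $\theta_{u,v}$ leaves the first coordinate untouched and replaces $\chi$ by $\chi^{u,v}$, which for fixed $y$ is simply $\chi^{u,v}(z) = \chi(z) + \delta_{u,v}(z)$, where $\delta_{u,v}\colon A\setminus\{y\}\to\mathbb Z_2$ is the function that is $1$ on $z$ precisely when $\{y,z\}=\{u,v\}$ and $0$ otherwise (additions in $\mathbb Z_2$). Thus, viewing the set of valuation functions for $y$ as the abelian group $(\mathbb Z_2)^{A\setminus\{y\}}$, the map $\theta_{u,v}$ acts on the second coordinate as translation by the fixed element $\delta_{u,v}$, and likewise $\theta_{w,x}$ acts by translation by $\delta_{w,x}$. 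Since translations in an abelian group commute, both $\theta_{u,v}\theta_{w,x}$ and $\theta_{w,x}\theta_{u,v}$ send $(y,\chi)$ to $\bigl(y,\ z\mapsto \chi(z)+\delta_{u,v}(z)+\delta_{w,x}(z)\bigr)$. As $(y,\chi)$ was arbitrary, $\theta_{u,v}\theta_{w,x}=\theta_{w,x}\theta_{u,v}$.

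There is essentially no obstacle here: the one point worth noting is that $y$ (the first coordinate) is never altered by either map, so after the first map is applied the second map still uses the same $y$ and hence the same function $\delta$; once this is observed, the statement is just commutativity of addition in $\mathbb Z_2$. The borderline cases --- $\{u,v\}=\{w,x\}$, the two pairs sharing exactly one vertex, or $y$ lying in one or both pairs --- are all covered by the same computation, since they only affect whether $\delta_{u,v}$ or $\delta_{w,x}$ happens to be identically zero, not the fact that the two translations commute.
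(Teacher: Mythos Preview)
Your proof is correct and is exactly the intended argument; the paper itself treats this observation as immediate (it is stated with a bare \qed\ and no proof), and your pointwise verification via translations in the abelian group $(\mathbb Z_2)^{A\setminus\{y\}}$ is the natural way to unpack it.
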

\begin{observation}\label{obs:graph:com2}
For every bijection $\pi\colon A\to A$ and for every $u<v\in A$ it holds that $\theta_\pi\theta_{u,v} = \theta_{\pi(u),\pi(v)}\theta_\pi$. \qed
\end{observation}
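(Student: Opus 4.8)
The plan is to verify the identity directly, by evaluating both composites on an arbitrary vertex $(x,\chi)\in B$ and comparing the two resulting valuation functions pointwise. The one preliminary point I would address is well-definedness: although the notation $\theta_{u,v}$ was introduced under the hypothesis $u<v$, the definition of $\chi^{u,v}$ refers only to the unordered pair $\{u,v\}$, so $\theta_{\pi(u),\pi(v)}$ is unambiguous regardless of whether $\pi$ preserves the order of $u$ and $v$. Once this is noted, everything is bookkeeping.

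Concretely, unwinding the definitions gives, on one side, $\theta_\pi\theta_{u,v}((x,\chi)) = \theta_\pi((x,\chi^{u,v})) = (\pi(x),(\chi^{u,v})^\pi)$ with $(\chi^{u,v})^\pi(\pi(y)) = \chi^{u,v}(y)$ for $y\in A\setminus\{x\}$; and, on the other side, $\theta_{\pi(u),\pi(v)}\theta_\pi((x,\chi)) = \theta_{\pi(u),\pi(v)}((\pi(x),\chi^\pi)) = (\pi(x),(\chi^\pi)^{\pi(u),\pi(v)})$ with $\chi^\pi(\pi(y)) = \chi(y)$. Both vertices have first coordinate $\pi(x)$, so it remains to show the two valuation functions for $\pi(x)$ agree; since every element of $A\setminus\{\pi(x)\}$ is $\pi(y)$ for a unique $y\in A\setminus\{x\}$, it is enough to compare their values at $\pi(y)$.

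There one computes $(\chi^{u,v})^\pi(\pi(y)) = \chi^{u,v}(y)$, which is $1+\chi(y)$ when $\{x,y\}=\{u,v\}$ and $\chi(y)$ otherwise, while $(\chi^\pi)^{\pi(u),\pi(v)}(\pi(y))$ is $1+\chi^\pi(\pi(y)) = 1+\chi(y)$ when $\{\pi(x),\pi(y)\}=\{\pi(u),\pi(v)\}$ and $\chi^\pi(\pi(y))=\chi(y)$ otherwise; injectivity of $\pi$ makes the two case conditions equivalent, so the values coincide. I do not expect a genuine obstacle here — the statement is a conjugation identity of the kind one wants precisely because it is trivial to check — and the only things to be careful about are the well-definedness of $\theta_{\pi(u),\pi(v)}$ observed above and keeping straight which variable is playing the role of the ``base point'' $x$ of the valuation function during the composition.
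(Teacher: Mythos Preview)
Your proof is correct and is precisely the routine verification the paper has in mind: the observation is stated with an immediate \qed\ and no argument, so the direct pointwise comparison you carry out is exactly what is being left to the reader. Your remark on the well-definedness of $\theta_{\pi(u),\pi(v)}$ when $\pi$ reverses the order of $u,v$ is a useful clarification.
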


\subsection{Extending partial automorphisms}
We now show that $\str B$ extends all partial automorphisms of $\str A'$. Fix a partial automorphism $\varphi\colon \str A'\to \str A'$. Looking at the first coordinates, it induces a partial bijection of $A$. Extend it to a bijection $\hat\varphi\colon A\to A$

\begin{observation}\label{obs:graph:proj_fine}
For every $(x,\chi_x) \in \dom(\varphi)$ we have that if $\varphi((x,\chi_x)) = (y,\chi_y)$ then $\theta_{\hat\varphi}((x,\chi_x)) = (y,\chi)$ for some $\chi$. \qed
\end{observation}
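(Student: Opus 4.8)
The plan is to simply unwind the definition of $\theta_{\hat\varphi}$ and use the way $\hat\varphi$ was constructed. Recall from the definition of $\theta_\pi$ that for a bijection $\pi\colon A\to A$ we have $\theta_\pi((x,\chi)) = (\pi(x),\chi^\pi)$; in particular the first coordinate of $\theta_\pi((x,\chi))$ is always $\pi(x)$. So the whole content of the observation is that $\hat\varphi$ sends the first coordinate of $(x,\chi_x)$ to the first coordinate of $\varphi((x,\chi_x))$.

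To see this, recall that $\str A' = \psi(\str A)$ has vertex set $\{(x,\chi_x) : x\in A\}$, so each $x\in A$ occurs as the first coordinate of exactly one vertex of $\str A'$. Hence taking first coordinates turns $\varphi$ into a well-defined partial bijection of $A$, namely the one sending the first coordinate of any $(x,\chi_x)\in\dom(\varphi)$ to the first coordinate of $\varphi((x,\chi_x))$; and $\hat\varphi$ was chosen precisely as an extension of this partial bijection to all of $A$. Therefore, given $(x,\chi_x)\in\dom(\varphi)$ with $\varphi((x,\chi_x)) = (y,\chi_y)$, we get $\hat\varphi(x) = y$, and so $\theta_{\hat\varphi}((x,\chi_x)) = (\hat\varphi(x),\chi_x^{\hat\varphi}) = (y,\chi_x^{\hat\varphi})$, which is of the desired form $(y,\chi)$ with $\chi = \chi_x^{\hat\varphi}$.

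There is no real obstacle here: the only point that needs a word of justification is that taking first coordinates really does produce a \emph{partial bijection} of $A$ (so that extending it to $\hat\varphi$ makes sense), and this is immediate from the fact that $\psi$ is injective on $A$ and each vertex of $\str A'$ carries a single valuation function. The observation is deliberately weak — it isolates the trivial statement that $\theta_{\hat\varphi}$ already gets the first coordinate right, postponing all the genuine work (correcting the valuation functions so that $\theta_{\hat\varphi}$ composed with suitable $\theta_{u,v}$'s actually agrees with $\varphi$ on second coordinates) to the subsequent steps.
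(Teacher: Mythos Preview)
Your argument is correct and is exactly the unfolding of definitions that the paper has in mind; note that the paper itself gives no proof beyond the \qed, treating the observation as immediate from the construction of $\hat\varphi$ and the formula $\theta_\pi((x,\chi)) = (\pi(x),\chi^\pi)$.
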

In the following we will show that one can compose $\theta_{\hat\varphi}$ with a suitable set of automorphisms $\theta_{u,v}$ to extend $\varphi$. Note that if we believe that this is possible then there is an obvious canonical way of doing it: Given $(x,\chi_x) \in \dom(\varphi)$, with $\varphi((x,\chi_x)) = (y,\chi_y)$ and $\theta_{\hat\varphi}((x,\chi_x)) = (y,\chi)$, we need to fix those entries $z$ for which $\chi_y(z) \neq \chi(z)$.

First, we define a set $F$ consisting of all pairs $\{u,v\}$ satisfying the following:
\begin{enumerate}
  \item $u < v \in A$,
  \item either $(u,\chi_u) \in \dom(\varphi)$ or $(v,\chi_v)\in \dom(\varphi)$ (or both),
  \item\label{g:cond3} if $(u,\chi_u) \in \dom(\varphi)$, $\varphi((u,\chi_u)) = (x,\chi_x)$, and $\theta_{\hat\varphi}((u,\chi_u)) = (x,\chi)$ then $\chi(\hat\varphi(v)) \neq \chi_x(\hat\varphi(v))$, and
  \item\label{g:cond4} if $(v,\chi_v) \in \dom(\varphi)$, $\varphi((v,\chi_v)) = (y,\chi_y)$, and $\theta_{\hat\varphi}((v,\chi_v)) = (y,\xi)$ then $\xi(\hat\varphi(u)) \neq \chi_y(\hat\varphi(u))$.
\end{enumerate}
\begin{observation}
If $u < v\in A$ and $(u,\chi_u), (v,\chi_v) \in \dom(\varphi)$ then~\ref{g:cond3} is satisfied if and only if~\ref{g:cond4} is.
\end{observation}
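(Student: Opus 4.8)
The plan is to unwind every definition involved and reduce both conditions~\ref{g:cond3} and~\ref{g:cond4} to a single equation in $\mathbb Z_2$, which will then be manifestly symmetric in the roles of $u$ and $v$.

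First I would set up notation: write $\varphi((u,\chi_u)) = (x,\chi_x)$ and $\varphi((v,\chi_v)) = (y,\chi_y)$, so that looking at first coordinates gives $\hat\varphi(u) = x$ and $\hat\varphi(v) = y$. By the definition of $\theta_{\hat\varphi}$, if $\theta_{\hat\varphi}((u,\chi_u)) = (x,\chi)$ then $\chi(\hat\varphi(v)) = \chi_u(v)$, so (using $\hat\varphi(v)=y$) condition~\ref{g:cond3} says precisely $\chi_u(v) \neq \chi_x(y)$, i.e. $\chi_u(v) + \chi_x(y) = 1$ in $\mathbb Z_2$. Symmetrically, if $\theta_{\hat\varphi}((v,\chi_v)) = (y,\xi)$ then $\xi(\hat\varphi(u)) = \chi_v(u)$, and condition~\ref{g:cond4} says precisely $\chi_v(u) + \chi_y(x) = 1$ in $\mathbb Z_2$.

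The key step is to exploit that $\varphi$ is a partial automorphism of $\str A'$, hence preserves the edge relation of $\str B$. By the construction of $\str B$, $(u,\chi_u)$ and $(v,\chi_v)$ are adjacent if and only if $\chi_u(v) + \chi_v(u) = 0$, and $(x,\chi_x)$ and $(y,\chi_y)$ are adjacent if and only if $\chi_x(y) + \chi_y(x) = 0$; since $\varphi$ maps the first pair to the second and preserves adjacency, these two sums are equal in $\mathbb Z_2$. Adding them and regrouping yields
$$\big(\chi_u(v) + \chi_x(y)\big) + \big(\chi_v(u) + \chi_y(x)\big) = 0,$$
so $\chi_u(v) + \chi_x(y) = \chi_v(u) + \chi_y(x)$ in $\mathbb Z_2$; in particular one of these two sums equals $1$ exactly when the other does, which is the claimed equivalence of~\ref{g:cond3} and~\ref{g:cond4}.

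This argument is essentially bookkeeping, so there is no substantial obstacle; the only points requiring care are reading off correctly from the definition of $\theta_{\hat\varphi}$ that the modified valuation functions satisfy $\chi(\hat\varphi(v)) = \chi_u(v)$ and $\xi(\hat\varphi(u)) = \chi_v(u)$, and remembering that all arithmetic takes place in $\mathbb Z_2$, so that the inequalities in~\ref{g:cond3} and~\ref{g:cond4} are literally the statements that the respective sums equal $1$.
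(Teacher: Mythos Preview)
Your proof is correct and follows essentially the same idea as the paper: both arguments reduce to the fact that $\varphi$ preserves adjacency, so the two $\mathbb Z_2$-sums governing the edge between the images agree. The only minor difference is that you unfold the definition of $\theta_{\hat\varphi}$ directly to get $\chi(\hat\varphi(v)) = \chi_u(v)$ and $\xi(\hat\varphi(u)) = \chi_v(u)$, whereas the paper instead invokes that $\theta_{\hat\varphi}$ is an automorphism (so it too preserves adjacency) to relate $\chi(\hat\varphi(v))$ and $\xi(\hat\varphi(u))$; both routes land on the same equation $\chi_u(v)+\chi_x(y) = \chi_v(u)+\chi_y(x)$.
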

\begin{proof}
This follows from the fact that $\theta_{\hat\varphi}$ is an automorphism of $\str B$ and the fact that $\varphi$ is a partial automorphism (and thus it preserves whether $(u,\chi_u)(v,\chi_v)$ is an edge): By the definition of $\str B$, knowing $\chi(\hat\varphi(v))$ or $\chi_x(\hat\varphi(v))$ respectively determines $\xi(\hat\varphi(u))$ resp. $\chi_y(\hat\varphi(u))$ and vice versa once we know whether $(u,\chi_u)(v,\chi_v)$ is an edge.
\end{proof}

Let $\theta_F$ be the composition of all $\theta_{u,v}$ for $\{u,v\}\in F$ (by Observation~\ref{obs:npart:com}, $\theta_F$ does not depend on the order of the composition). Put $\theta = \theta_{\hat\varphi}\theta_F$.

\begin{prop}
$\theta$ is an automorphism of $\str B$ extending $\varphi$.
\end{prop}
\begin{proof}
The fact that $\theta$ is an automorphism is clear as it is the composition of several automorphisms of $\str B$. Observation~\ref{obs:graph:proj_fine} gives us that $\theta$ and $\varphi$ agree on the first coordinate. From the definition of $F$ it follows that if $(u,\chi_u)\in \dom(\varphi)$, $\varphi((u,\chi_u)) = (x,\chi_x)$, and $\theta_{\hat\varphi}((u,\chi_u)) = (x,\chi)$, then $\{u,v\} \in F$ if and only if $\chi_x(v) \neq \chi(v)$. Consequently, $\theta$ indeed extends $\varphi$.
\end{proof}

We are now ready to prove Hrushovski's theorem:
\begin{theorem}[Hrushovski, 1992~\cite{hrushovski1992}]
The class of all finite graphs has EPPA.
\end{theorem}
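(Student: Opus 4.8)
The plan is to assemble the pieces developed in this section. Given an arbitrary finite graph $\str A$, I would first fix an enumeration $A=\{1,\ldots,n\}$ of its vertex set (this enumeration is used only to pin down the embedding $\psi$ and the maps $\theta_{u,v}$, and is irrelevant to the final statement), and then apply the witness construction to obtain the structure $\str B$ together with the embedding $\psi\colon\str A\to\str B$. Note that $\str B$ is finite --- it has $n\cdot 2^{n-1}$ vertices, since for each $x\in A$ there are $2^{n-1}$ valuation functions $\chi\colon A\setminus\{x\}\to\mathbb Z_2$ --- and that it is again a graph, since by the second clause of the construction adjacency in $\str B$ is defined by the irreflexive, symmetric condition ``$x\neq x'$ and $\chi(x')+\chi(x)=0$''. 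Set $\str A'=\psi(\str A)$; this is an isomorphic copy of $\str A$ sitting inside $\str B$.

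Next I would invoke the fact that, to establish EPPA for a class, it suffices to exhibit for each member $\str A$ a single finite member of the class containing a copy of $\str A$ in which every partial automorphism of that copy extends to a total automorphism. Since $\str A'\cong\str A$, it is enough to verify this for $\str A'$ inside $\str B$, and this is exactly the content of the Proposition above: for any partial automorphism $\varphi\colon\str A'\to\str A'$, one extends the partial bijection it induces on first coordinates to a bijection $\hat\varphi\colon A\to A$, forms the set $F$ of pairs $\{u,v\}$, and puts $\theta=\theta_{\hat\varphi}\theta_F$ (with $\theta_F$ well defined by Observation~\ref{obs:graph:com}); then $\theta$ is an automorphism of $\str B$, being a composition of the automorphisms $\theta_{\hat\varphi}$ and $\theta_{u,v}$, and it restricts to $\varphi$ on $\str A'$.

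It then follows that $\str B$ is an EPPA-witness for $\str A'$, hence for $\str A$, and as $\str A$ was an arbitrary finite graph and $\str B$ is again a finite graph, the class of all finite graphs has EPPA. There is no substantial obstacle left: the theorem is a formal consequence of the lemmas describing $\Aut(\str B)$ and of the Proposition. The only points that required a moment's care were already dispatched above --- that $\psi$ is genuinely an embedding, that the definition of $F$ is self-consistent at pairs with both endpoints in $\dom(\varphi)$ (the Observation comparing conditions~\ref{g:cond3} and~\ref{g:cond4}), and that the construction does not accidentally leave the class of graphs --- so the write-up of the theorem itself amounts to citing these facts in sequence.
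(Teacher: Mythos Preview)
Your proposal is correct and follows essentially the same approach as the paper: cite the construction of $\str B$ and $\psi$, invoke the Proposition to conclude that $\str B$ is an EPPA-witness for $\str A'=\psi(\str A)$, and pass back to $\str A$ by isomorphism. Your write-up merely spells out in more detail (e.g.\ the cardinality $n\cdot 2^{n-1}$ and the observation that $\str B$ stays in the class of graphs) what the paper's three-line proof leaves implicit.
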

\begin{proof}
Given a finite graph $\str A$, use the construction from this section to construct a finite graph $\str B$. We have proved that $\str B$ is an EPPA-witness for $\str A' = \psi(\str A)$. Clearly, by taking an isomorphism, one gets an EPPA-witness for $\str A$.
\end{proof}

\medskip

Note that $\str B$ only depends on the set $A$ (in fact, only its cardinality) and not the graph $\str A$. Our constructions in the following sections will have similar properties. Note also that $\str B$ can be thought of as some form of product (indeed, its vertices are pairs), and automorphisms $\theta_\pi$ come from some group action on the first coordinate, while automorphisms $\theta_{u,v}$ can be thought of as coming from some group action on the second coordinate. Again, our constructions in the following sections will behave very similarly.

\section{EPPA for $n$-partite tournaments}\label{sec:npart}
In this section we will prove Theorem~\ref{thm:partite}. The structure of the proof will follow what we have seen in Section~\ref{sec:graphs}, and we recommend the reader to read Section~\ref{sec:graphs} first.

Fix a finite $n\geq 2$ (we will handle the case $n=\omega$ at the very end) and a finite $n$-partite tournament $\str A$ with parts
$A_1,A_2,\ldots,A_n$. We will give an explicit construction of an $n$-partite tournament $\str B$ which is an EPPA-witness for $\str A$.

Without loss of generality we can assume the following
\begin{enumerate}
\item $A=\{1,2,\ldots, k\}$,
\item for every $x\in A_i$ and every $y\in A_j$ it holds that $x<y$ whenever $i<j$, and 
\item $|A_1|=|A_2|=\cdots=|A_n|$ (this follows from the fact that if $\str D$ is an EPPA-witness for $\str C$ then it is also an EPPA-witness for all subgraphs of $\str C$).
\end{enumerate}

\subsection{Witness construction}
Given vertex $x\in A_i$ for some $1\leq i\leq n$, we put $N(x) = A\setminus A_i$, and we call every function $\chi\colon N(x)\to \mathbb Z_2$ a \emph{valuation function for $x$}. Note that operations with valuation functions will be performed in $\mathbb Z_2$.

The structure $\str B$ is constructed as follows:
\begin{enumerate}
 \item The vertex set $B$ consists of all pairs $(x,\chi)$ where $x\in A$ and $\chi\colon N(x)\to \mathbb Z_2$ is a valuation function for $x$, and
 \item $(x,\chi)$ and $(x',\chi')$ are adjacent if and only if $x$ and $x'$ belong to different parts of $\str A$. The edge
is oriented from $(x,\chi)$ to $(x',\chi')$ if and only if one of the following is satisfied:
\begin{itemize}
\item $x > x', \text{ and }\chi(x') + \chi'(x) = 1$, or
\item $x < x', \text{ and }\chi(x') + \chi'(x) = 0$.
\end{itemize}
Otherwise the edge is oriented from $(x',\chi')$ to $(x,\chi)$.
\end{enumerate}
It is easy to observe that $\str B$ is an $n$-partite tournament with parts $B_i=\{(x,\chi) \in B : x\in A_i\}$.

Next we construct an embedding $\psi\colon \str A\to \str B$, putting $\psi(x) = (x,\chi_x)$ where $\chi_x\colon N(x) \to \mathbb Z_2$ satisfies
$$\chi_x(y)=
\begin{cases}
1 & \text{ if } y<x \text{ and there is an edge directed from $x$ to $y$ in $\str A$}\\
0 & \text{ otherwise}.
\end{cases}$$
It is easy to verify that $\psi$ is indeed an embedding of $\str A$ into $\str B$.
Put $\str A' = \psi(\str A)$. This is the copy of $\str A$ in $\str B$ whose partial automorphisms we will extend.

\subsection{Automorphisms of $\str B$}
We now define two families of automorphisms of $\str B$, denoted by $\theta_\pi$ and $\theta_{u,v}$ respectively, which we will later use to extend partial automorphisms.

Let $\pi\colon A\to A$ be a \emph{part-preserving} bijection (that is, for every $x,y\in A$, if there is $i$ such that $x,y\in A_i$ then there is $j$ such that $\pi(x),\pi(y)\in A_j$). Define $\theta_\pi\colon B\to B$ such that $\theta_\pi((x,\chi)) = (\pi(x),\chi^\pi)$ where $\chi^\pi\colon N(\pi(x))\to \mathbb Z_2$ is the valuation function for $\pi(x)$ satisfying
$$\chi^\pi(\pi(y)) = \begin{cases}
1+\chi(y) & \text{ if } x < y \text{ and } \pi(x) > \pi(y)\\
\chi(y) & \text{ otherwise}.
\end{cases}$$
\begin{lemma}
For every part-preserving bijection $\pi\colon A\to A$, $\theta_\pi$ is an automorphism of $\str B$.
\end{lemma}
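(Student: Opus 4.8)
The statement to prove is that for every part-preserving bijection $\pi\colon A\to A$, the map $\theta_\pi$ is an automorphism of $\str B$. Following the pattern of the analogous lemma for graphs in Section~\ref{sec:graphs}, I would argue in two stages: first that $\theta_\pi$ is a bijection of $B$, and second that it preserves the (oriented) edge relation of $\str B$. For the bijectivity, the natural candidate for the inverse is $\theta_{\pi^{-1}}$; one checks that $\theta_{\pi^{-1}}\circ\theta_\pi = \id_B$ directly from the definition of the correction term in the formula for $\chi^\pi$. The only subtlety here is that the correction $1+\chi(y)$ is triggered exactly when $x<y$ but $\pi(x)>\pi(y)$, i.e. when $\pi$ reverses the order of the pair $\{x,y\}$; since reversing twice returns to the original order, the corrections cancel in $\mathbb Z_2$, so composition gives the identity. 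Also note $\theta_\pi$ maps $B_i$ onto $B_j$ when $\pi(A_i)=A_j$, using that $\pi$ is part-preserving, so the target valuation functions have the correct domains.

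\textbf{Edge preservation.} The heart of the argument is a computation, entirely parallel to the graph case, of the quantity that governs the orientation of an edge. Fix $(x,\chi),(y,\xi)\in B$ with $x,y$ in different parts (so $\pi(x),\pi(y)$ are in different parts too, and both pairs span a genuine edge). Write $(x',\chi')=\theta_\pi((x,\chi))$ and $(y',\xi')=\theta_\pi((y,\xi))$, so $x'=\pi(x)$, $y'=\pi(y)$. The orientation of the edge between $(x,\chi)$ and $(y,\xi)$ in $\str B$ is determined by the pair consisting of the comparison of $x$ and $y$ in the linear order on $A$ together with the value $\chi(y)+\xi(x)\in\mathbb Z_2$; likewise for the image. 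The plan is to show that
$$\chi'(y') + \xi'(x') \;=\; \chi(y) + \xi(x) + [\,x<y \text{ and } x'>y'\,] + [\,y<x \text{ and } y'>x'\,] \pmod 2,$$
where $[\cdot]$ is the indicator. Exactly one of the two bracketed terms can be nonzero, and it is nonzero precisely when $\pi$ reverses the order of $\{x,y\}$. So if $\pi$ preserves the order of $\{x,y\}$, then the comparison of $x',y'$ matches that of $x,y$ and the $\mathbb Z_2$-value is unchanged, hence the orientation rule gives the same answer; if $\pi$ reverses the order, then the comparison flips \emph{and} the $\mathbb Z_2$-value flips, and one checks case by case (using the two bullet points in the definition of the orientation of $\str B$) that flipping both the order comparison and the parity leaves the orientation of the edge invariant. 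Either way, $\theta_\pi$ sends an edge oriented from $(x,\chi)$ to $(y,\xi)$ to an edge oriented from $(x',\chi')$ to $(y',\xi')$.

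\textbf{Main obstacle.} The only place requiring genuine care is the bookkeeping in the displayed identity and the subsequent case analysis: one must track separately the contribution of the correction term to $\chi'(y')$ (triggered when $x<y$ and $\pi(x)>\pi(y)$) and to $\xi'(x')$ (triggered when $y<x$ and $\pi(y)>\pi(x)$), and then verify that ``flip the order comparison and flip the parity'' is a symmetry of the orientation rule ``$x>x'$ and $\chi(x')+\chi'(x)=1$, or $x<x'$ and $\chi(x')+\chi'(x)=0$''. This is a short finite check but is the conceptual content; everything else is routine. I expect no difficulty analogous to this to recur, since it is structurally the same computation as the one already carried out for $\theta_{u,v}$ in the graph section, merely with an order-dependent twist in the definition of $\str B$.
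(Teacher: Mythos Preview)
Your edge-preservation argument is correct and is essentially the paper's own proof: the paper's quantity $V = \chi(y)+\xi(x)+\chi^\pi(\pi(y))+\xi^\pi(\pi(x))$ is exactly your sum of indicators, and the paper makes the same case split on whether $\pi$ is increasing or decreasing on $\{x,y\}$.

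However, your bijectivity argument has a genuine error: it is \emph{not} true that $\theta_{\pi^{-1}}\circ\theta_\pi = \id_B$. The correction term in the definition of $\chi^\pi$ is asymmetric --- it fires when $x<y$ and $\pi(x)>\pi(y)$, but \emph{not} when $x>y$ and $\pi(x)<\pi(y)$. So if $\pi$ reverses the order of a pair $\{x,y\}$ with $x<y$, then $\theta_\pi$ applies a correction to $\chi^\pi(\pi(y))$, but when you then apply $\theta_{\pi^{-1}}$ (now at base point $\pi(x)$ with $\pi(x)>\pi(y)$), the trigger ``$\pi(x)<\pi(y)$ and $x>y$'' fails, so no second correction is applied and the net effect is $+1$, not $0$. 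Your sentence ``reversing twice returns to the original order, the corrections cancel in $\mathbb Z_2$'' is precisely where this goes wrong. The paper itself remarks on this phenomenon in Section~\ref{sec:conclusion}: for a transposition $\pi=(xy)$ one has $\theta_\pi^2 = \theta_{x,y} \neq \id$, so in particular $\theta_{\pi^{-1}} = \theta_\pi$ is not the inverse of $\theta_\pi$.

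The fix is easy and is what the paper does: argue injectivity directly (if $\theta_\pi((x,\chi)) = \theta_\pi((y,\xi))$ then $\pi(x)=\pi(y)$ forces $x=y$, and then $\chi^\pi=\xi^\pi$ forces $\chi=\xi$ since the correction depends only on $x$ and the argument), and conclude bijectivity from finiteness of $B$.
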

\begin{proof}
First observe that $\theta_\pi$ is injective (and hence bijective as $B$ is finite): If $\theta_\pi((x,\chi)) = \theta_\pi((y,\xi))$ then we know that $\pi(x) = \pi(y)$ (and so $x=y$) and $\chi^\pi = \xi^\pi$. It is easy to verify that $\chi^\pi = \xi^\pi$ implies $\chi = \xi$, and hence $\theta_\pi$ is indeed a bijection. Also note that $\theta_\pi$ preserves parts, and hence non-edges, of $\str B$. Let $(x,\chi)$ and $(y,\xi)$ be vertices of $\str B$ such that there is an edge from $(x,\chi)$ to $(y,\xi)$ (note that this implies $x\neq y$). Put
$$V = \chi(y) + \xi(x) + \chi^\pi(\pi(y)) + \xi^\pi(\pi(x)).$$
To see that $\theta_\pi$ preserves the direction of the edge we need to prove that $V=1$ if and only if $\pi$ is decreasing on the pair $\{x,y\}$. If $\pi$ is increasing on the pair then $\chi^\pi(y) = \chi(y)$ and $\xi^\pi(x) = \xi(x)$, hence $V = 0$. If $\pi$ is decreasing then exactly one of $\chi^\pi(y) = 1+ \chi(y)$ and $\xi^\pi(x) = 1+\xi(x)$ holds (depending on whether $x<y$ or $y<x$), and thus $V=1$.
\end{proof}

Let $u < v\in A$ be vertices from different parts of $\str A$. Define $\theta_{u,v}\colon B\to B$ such that $\theta_{u,v}((x,\chi)) = (x,\chi^{u,v})$ where where $\chi^{u,v}\colon N(x)\to \mathbb Z_2$ is the valuation function for $x$ satisfying
$$\chi^{u,v}(y) = \begin{cases}
1+\chi(y) & \text{ if } \{x,y\} = \{u,v\}\\
\chi(y) & \text{ otherwise}.
\end{cases}$$
\begin{lemma}
For every pair of vertices $u < v\in A$ from different parts of $\str A$, $\theta_{u,v}$ is an automorphism of $\str B$.
\end{lemma}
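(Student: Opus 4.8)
The plan is to mirror the three-step argument used for $\theta_{u,v}$ in the graph case: first that $\theta_{u,v}$ is a bijection, then that it preserves parts (hence non-edges of $\str B$), and finally that it preserves the orientation of every edge.

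For bijectivity I would note that $\theta_{u,v}$ is an involution. Applied to $(x,\chi)$, the map changes the value of the valuation function only at the unique argument $y$ (if any) for which $\{x,y\}=\{u,v\}$; applying it again flips that value back. Since $u\neq v$ lie in different parts, such a $y$ belongs to $N(x)$ precisely when $x\in\{u,v\}$, and for every other $x$ we have $\chi^{u,v}=\chi$. In all cases $\theta_{u,v}\circ\theta_{u,v}=\id_B$, so $\theta_{u,v}$ is a bijection. Because it fixes first coordinates, $\theta_{u,v}$ maps $B_i$ onto $B_i$ for each $i$, and hence preserves parts and therefore non-edges of $\str B$.

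For the orientation, take vertices $(x,\chi),(y,\xi)\in B$ lying in different parts, so that there is an edge between them and between their images. Since first coordinates are preserved, whether $x>y$ or $x<y$ is unchanged, so by the definition of adjacency in $\str B$ it suffices to check that $\chi(y)+\xi(x)$ is invariant, i.e.
$$\chi^{u,v}(y) + \xi^{u,v}(x) = \chi(y) + \xi(x)$$
in $\mathbb Z_2$. The key observation is that the modification is \emph{localized to a single unordered pair}: $\chi^{u,v}$ differs from $\chi$ only at the argument which makes the pair $\{x,\,\cdot\,\}$ equal to $\{u,v\}$. Hence if $\{x,y\}\neq\{u,v\}$ both summands on the left are unchanged, and if $\{x,y\}=\{u,v\}$ both summands are flipped, so in either case the sum in $\mathbb Z_2$ is unchanged and the displayed equality holds. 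Thus $\theta_{u,v}$ preserves the condition defining an edge's direction, and combining the three steps shows it is an automorphism of $\str B$.

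There is no genuinely hard step here; the one point deserving care is the case where exactly one of $x,y$ lies in $\{u,v\}$. One must observe that then nothing changes, precisely because flipping a valuation function at an argument $z$ is triggered by the pair $\{x,z\}$ being $\{u,v\}$ rather than by $x$ alone being $u$ or $v$ --- the partner vertex is absent from $\{x,y\}$, so neither $\chi^{u,v}(y)$ nor $\xi^{u,v}(x)$ is affected. Once this is noted, the argument reduces to the routine $\mathbb Z_2$ computation above, identical in spirit to the graph case.
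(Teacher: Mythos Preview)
Your proof is correct and follows essentially the same approach as the paper: both arguments reduce to checking that $\chi^{u,v}(y)+\xi^{u,v}(x)=\chi(y)+\xi(x)$ in $\mathbb Z_2$ by splitting into the cases $\{x,y\}=\{u,v\}$ (both summands flip) and $\{x,y\}\neq\{u,v\}$ (neither flips). You are slightly more explicit about bijectivity and about the subcase where exactly one of $x,y$ lies in $\{u,v\}$, but the substance is identical.
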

\begin{proof}
Again, $\theta_{u,v}$ clearly preserves parts, and hence non-edges, of $\str B$. Let $(x,\chi)$ and $(y,\xi)$ be vertices of $\str B$ such that there is an edge from $(x,\chi)$ to $(y,\xi)$. If $\{x,y\} \neq \{u,v\}$ then 
$$\chi^{u,v}(y) + \xi^{u,v}(x) = \chi(y) + \xi(x),$$
and if $\{x,y\} = \{u,v\}$ then
$$\chi^{u,v}(y) + \xi^{u,v}(x) = 1+\chi(y) + 1+\xi(x) = \chi(y) + \xi(x).$$
In both cases the sums of valuation functions, and hence directions of edges, are preserved by $\theta_{u,v}$.
\end{proof}
\begin{observation}\label{obs:npart:com}
For every quadruple $u,v,w,x\in A$ for which $\theta_{u,v}$ and $\theta_{w,x}$ are defined it holds that $\theta_{u,v}\theta_{w,x} = \theta_{w,x}\theta_{u,v}$. \qed
\end{observation}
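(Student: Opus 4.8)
The plan is to rewrite the action of each $\theta_{u,v}$ on the second coordinate as addition (in $\mathbb Z_2$) of a fixed function that depends only on the first coordinate, and then invoke commutativity of $\mathbb Z_2$. First I would fix a vertex $(z,\chi)\in B$ and, for a pair $\{u,v\}$ of vertices from different parts of $\str A$ with $u<v$, define $e^z_{u,v}\colon N(z)\to\mathbb Z_2$ by setting $e^z_{u,v}(y)=1$ if $\{z,y\}=\{u,v\}$ and $e^z_{u,v}(y)=0$ otherwise. Unwinding the definition of $\theta_{u,v}$ gives immediately that $\theta_{u,v}((z,\chi))=(z,\chi+e^z_{u,v})$, where $\chi+e^z_{u,v}$ denotes the pointwise sum in $\mathbb Z_2$.

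With this reformulation in hand, the computation is routine: for every $(z,\chi)\in B$,
$$\theta_{u,v}(\theta_{w,x}((z,\chi)))=\theta_{u,v}((z,\chi+e^z_{w,x}))=(z,\chi+e^z_{w,x}+e^z_{u,v}),$$
and symmetrically $\theta_{w,x}(\theta_{u,v}((z,\chi)))=(z,\chi+e^z_{u,v}+e^z_{w,x})$. These two results agree because pointwise addition of $\mathbb Z_2$-valued functions is commutative and the first coordinate is left untouched by both maps. Since this holds for every vertex of $\str B$, we conclude $\theta_{u,v}\theta_{w,x}=\theta_{w,x}\theta_{u,v}$. Note the argument makes no assumption that $u,v,w,x$ are distinct, so it also covers the case $\{u,v\}=\{w,x\}$, where both sides are the identity.

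I do not expect a genuine obstacle here: the only content is the observation that the second-coordinate action factors through the abelian group of $\mathbb Z_2$-valued functions, exactly the phenomenon already isolated for graphs in Observation~\ref{obs:graph:com}. The sole point worth double-checking is that the rewriting $\theta_{u,v}((z,\chi))=(z,\chi+e^z_{u,v})$ faithfully reproduces the case split in the definition of $\theta_{u,v}$, which is immediate.
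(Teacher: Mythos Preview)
Your argument is correct: rewriting $\theta_{u,v}((z,\chi))=(z,\chi+e^z_{u,v})$ reduces the claim to commutativity of pointwise $\mathbb Z_2$-addition, and your verification that this rewriting matches the case split in the definition is accurate. The paper itself offers no proof beyond the \qed{} symbol, so your write-up is simply a faithful unpacking of what is left implicit there.
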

\begin{observation}\label{obs:npart:com2}
For every part-preserving bijection $\pi\colon A\to A$ and for every $u<v\in A$ for which $\theta_{u,v}$ is defined it holds that $\theta_\pi\theta_{u,v} = \theta_{\pi(u),\pi(v)}\theta_\pi$. \qed
\end{observation}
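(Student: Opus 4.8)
The plan is to verify the identity pointwise on $B$. Before doing so I would record two small bookkeeping remarks. First, the defining formula for $\theta_{u,v}$ refers only to the unordered pair $\{u,v\}$, so the symbol $\theta_{a,b}$ depends only on $\{a,b\}$; in particular $\theta_{\pi(u),\pi(v)}$ is meaningful regardless of whether $\pi$ respects the order of $u$ and $v$. Second, a part-preserving bijection actually induces a permutation of the parts (using the standing assumption $|A_1|=\cdots=|A_n|$: $\pi$ maps each $A_i$ injectively into a single part, and by cardinality this assignment of parts must itself be a bijection), so $\pi(u)$ and $\pi(v)$ again lie in different parts, $\pi$ carries $N(x)$ bijectively onto $N(\pi(x))$, and $\theta_{\pi(u),\pi(v)}$ is defined.

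Next, fix a vertex $(x,\chi)\in B$. Both $\theta_\pi\theta_{u,v}$ and $\theta_{\pi(u),\pi(v)}\theta_\pi$ send its first coordinate to $\pi(x)$, since $\theta_{u,v}$ and $\theta_{\pi(u),\pi(v)}$ fix first coordinates and $\theta_\pi$ sends $x$ to $\pi(x)$; so it remains to compare the two resulting valuation functions at an arbitrary $\pi(y)\in N(\pi(x))$. Let $c=1$ if $x<y$ and $\pi(x)>\pi(y)$, and $c=0$ otherwise. The crucial point is that the $\pi$-twist replaces the value at $y$ by (that value)${}+c$ in $\mathbb Z_2$, and this correction $c$ depends only on the pair $\{x,y\}$ and the bijection $\pi$, not on the valuation being twisted. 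Hence on the left-hand side, forming $\chi^{u,v}$ and then twisting by $\pi$ yields the value $\chi^{u,v}(y)+c$ at $\pi(y)$; on the right-hand side, twisting $\chi$ by $\pi$ first yields $\chi(y)+c$ at $\pi(y)$, and then $\theta_{\pi(u),\pi(v)}$ adds $1$ exactly when $\{\pi(x),\pi(y)\}=\{\pi(u),\pi(v)\}$, which, since $\pi$ is a bijection, happens exactly when $\{x,y\}=\{u,v\}$.

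It then remains to split into two cases. If $\{x,y\}\neq\{u,v\}$, then $\chi^{u,v}(y)=\chi(y)$ and neither outer map changes anything, so both sides give $\chi(y)+c$. If $\{x,y\}=\{u,v\}$, then the left-hand side gives $(1+\chi(y))+c$ while the right-hand side gives $(\chi(y)+c)+1$, and these agree in $\mathbb Z_2$. Thus the two valuation functions coincide at every entry, the composites agree on $(x,\chi)$, and since $(x,\chi)$ was arbitrary the identity follows. I do not expect any genuine obstacle: the argument is a direct computation, identical in spirit to the proof of the graph analogue, Observation~\ref{obs:graph:com2}, with the only new content being the observation that the $\pi$-twist contributes a correction depending solely on the underlying pair, so that it passes through the $+1$ coming from $\theta_{u,v}$ without interference.
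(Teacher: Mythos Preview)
Your argument is correct: this is precisely the natural pointwise verification, and every step checks out, including the two bookkeeping remarks about $\theta_{a,b}$ depending only on $\{a,b\}$ and about $\pi$ inducing a genuine permutation of the parts. The paper itself gives no proof at all---the observation is simply stated with a \textsc{qed}---so there is nothing to compare against; your write-up supplies exactly the routine computation the authors chose to suppress.
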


\subsection{Extending partial automorphisms}
We now show that $\str B$ extends all partial automorphisms of $\str A'$. In fact, this will be almost verbatim the same as in Section~\ref{sec:graphs}. Fix a partial automorphism $\varphi\colon \str A'\to \str A'$. Looking at the first coordinates, it induces a partial permutation of $A$ which is part-preserving. Extend it to a part-preserving bijection $\hat\varphi\colon A\to A$ (this is possible since all parts have the same size).

\begin{observation}\label{obs:npart:proj_fine}
For every $(x,\chi_x) \in \dom(\varphi)$ we have that if $\varphi((x,\chi_x)) = (y,\chi_y)$ then $\theta_{\hat\varphi}((x,\chi_x)) = (y,\chi)$ for some $\chi\colon N(y)\to\mathbb Z_2$. \qed
\end{observation}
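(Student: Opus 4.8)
\emph{Proof idea.} This is the direct analogue of Observation~\ref{obs:graph:proj_fine}, and the plan is just to unwind the relevant definitions. Recall that $\varphi$ induces, by restriction to first coordinates, a part-preserving partial permutation of $A$, and that $\hat\varphi\colon A\to A$ was chosen to be a part-preserving bijection extending it. Hence, if $(x,\chi_x)\in\dom(\varphi)$ and $\varphi((x,\chi_x)) = (y,\chi_y)$, then this induced partial permutation sends $x$ to $y$, and since $\hat\varphi$ extends it, we get $\hat\varphi(x) = y$.

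Next I would invoke the definition of $\theta_{\hat\varphi}$, which makes sense precisely because $\hat\varphi$ is part-preserving, so that the preceding lemma guarantees that $\theta_{\hat\varphi}$ is a well-defined automorphism of $\str B$. (Note that both the part-preservation of the induced permutation and the possibility of extending it to a part-preserving bijection $\hat\varphi$ at all rely on the normalisation that all parts of $\str A$ have equal size.) By that definition, $\theta_{\hat\varphi}((x,\chi)) = (\hat\varphi(x),\chi^{\hat\varphi})$ for every $(x,\chi)\in B$, where $\chi^{\hat\varphi}$ is the valuation function for $\hat\varphi(x)$ specified there. Applying this to $(x,\chi_x)$ and substituting $\hat\varphi(x) = y$ yields $\theta_{\hat\varphi}((x,\chi_x)) = (y,\chi_x^{\hat\varphi})$, so the statement holds with $\chi = \chi_x^{\hat\varphi}\colon N(y)\to\mathbb Z_2$.

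There is no real obstacle here: the observation merely records that $\theta_{\hat\varphi}$ already acts correctly on first coordinates. The substantive work of making the composed automorphism agree with $\varphi$ on all of $\dom(\varphi)$ is left to the following steps, where one isolates the set $F$ of pairs $\{u,v\}$ whose valuations must be flipped and composes $\theta_{\hat\varphi}$ with the pairwise commuting automorphisms $\theta_{u,v}$, $\{u,v\}\in F$, exactly mirroring the graph case in Section~\ref{sec:graphs}.
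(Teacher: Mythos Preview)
Your proposal is correct and matches the paper's treatment: the observation is marked with a bare \qed{} in the paper, i.e.\ it is regarded as immediate from the definitions, and your unwinding of those definitions is exactly the intended justification. One small inaccuracy in a side remark: part-preservation of the induced partial permutation follows from $\varphi$ being a partial automorphism (it preserves non-edges, hence parts), not from the equal-size normalisation; only the \emph{extension} to a total part-preserving bijection $\hat\varphi$ uses that all parts have the same size.
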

In the following we will show that one can compose $\theta_{\hat\varphi}$ with a suitable set of automorphisms $\theta_{u,v}$ to extend $\varphi$. Note that if we believe that this is possible then there is an obvious canonical way of doing it: Given $(x,\chi_x) \in \dom(\varphi)$, with $\varphi((x,\chi_x)) = (y,\chi_y)$ and $\theta_{\hat\varphi}((x,\chi_x)) = (y,\chi)$, we need to fix those entries $z$ for which $\chi_y(z) \neq \chi(z)$.

First, we define a set $F$ consisting of all pairs $\{u,v\}$ satisfying the following:
\begin{enumerate}
  \item $u < v \in A$ and they are from different parts of $\str A$,
  \item either $(u,\chi_u) \in \dom(\varphi)$ or $(v,\chi_v)\in \dom(\varphi)$ (or both),
  \item\label{cond3} if $(u,\chi_u) \in \dom(\varphi)$, $\varphi((u,\chi_u)) = (x,\chi_x)$, and $\theta_{\hat\varphi}((u,\chi_u)) = (x,\chi)$ then $\chi(\hat\varphi(v)) \neq \chi_x(\hat\varphi(v))$, and
  \item\label{cond4} if $(v,\chi_v) \in \dom(\varphi)$, $\varphi((v,\chi_v)) = (y,\chi_y)$, and $\theta_{\hat\varphi}((v,\chi_v)) = (y,\xi)$ then $\xi(\hat\varphi(u)) \neq \chi_y(\hat\varphi(u))$.
\end{enumerate}
\begin{observation}\label{obs:Fokay}
If $u < v$ are from different parts of $A$, and $(u,\chi_u), (v,\chi_v) \in \dom(\varphi)$ then~\ref{cond3} is satisfied if and only if~\ref{cond4} is.
\end{observation}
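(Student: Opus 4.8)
The plan is to mimic the argument from the graph case, the main new subtlety being that we must keep track of edge \emph{orientations} rather than mere adjacency. Write $x=\hat\varphi(u)$, $y=\hat\varphi(v)$, and let $\varphi((u,\chi_u))=(x,\chi_x)$, $\varphi((v,\chi_v))=(y,\chi_y)$, $\theta_{\hat\varphi}((u,\chi_u))=(x,\chi)$, $\theta_{\hat\varphi}((v,\chi_v))=(y,\xi)$ (the first coordinates are as claimed by Observation~\ref{obs:npart:proj_fine}). Since $u,v$ lie in different parts and $\hat\varphi$ is part-preserving, $x$ and $y$ lie in different parts; in particular $y\in N(x)$ and $x\in N(y)$, so all four of $\chi_x(y)$, $\chi(y)$, $\chi_y(x)$, $\xi(x)$ are defined, and there is an edge between $(u,\chi_u)$ and $(v,\chi_v)$, between $(x,\chi_x)$ and $(y,\chi_y)$, and between $(x,\chi)$ and $(y,\xi)$.

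First I would observe that these three edges all have ``the same'' orientation in the following sense: because $\varphi$ is a partial automorphism, there is an edge from $(u,\chi_u)$ to $(v,\chi_v)$ if and only if there is one from $(x,\chi_x)$ to $(y,\chi_y)$; and because $\theta_{\hat\varphi}$ is an automorphism of $\str B$, there is an edge from $(u,\chi_u)$ to $(v,\chi_v)$ if and only if there is one from $(x,\chi)$ to $(y,\xi)$. Hence $(x,\chi_x)\to(y,\chi_y)$ holds precisely when $(x,\chi)\to(y,\xi)$ holds.

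Next I would unwind this equivalence using the definition of the orientation in $\str B$, splitting on whether $x<y$ or $x>y$ (exactly one holds, as $x\neq y$). If $x<y$, then $(x,\chi_x)\to(y,\chi_y)$ means $\chi_x(y)+\chi_y(x)=0$ and $(x,\chi)\to(y,\xi)$ means $\chi(y)+\xi(x)=0$; if $x>y$, the two conditions become $\chi_x(y)+\chi_y(x)=1$ and $\chi(y)+\xi(x)=1$. In either case, the equivalence of the two orientations is, over $\mathbb Z_2$, exactly the equation
\[
\chi_x(y)+\chi_y(x)=\chi(y)+\xi(x),
\]
which rearranges to $\chi(y)+\chi_x(y)=\xi(x)+\chi_y(x)$, i.e. $\chi(\hat\varphi(v))+\chi_x(\hat\varphi(v))=\xi(\hat\varphi(u))+\chi_y(\hat\varphi(u))$. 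Thus $\chi(\hat\varphi(v))\neq\chi_x(\hat\varphi(v))$ if and only if $\xi(\hat\varphi(u))\neq\chi_y(\hat\varphi(u))$, which is precisely the claim that~\ref{cond3} holds if and only if~\ref{cond4} holds. The only thing requiring care is the case analysis on the order of $x$ and $y$ together with the orientation conventions; everything else is a one-line computation in $\mathbb Z_2$.
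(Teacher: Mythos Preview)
Your argument is correct and follows essentially the same approach as the paper: both use that $\varphi$ and $\theta_{\hat\varphi}$ preserve the edge orientation between the two vertices, so the orientations of $(x,\chi_x)(y,\chi_y)$ and $(x,\chi)(y,\xi)$ agree, and since the order of the first coordinates is fixed this forces $\chi_x(y)+\chi_y(x)=\chi(y)+\xi(x)$. The paper states this in one sentence, whereas you spell out the case split on $x<y$ versus $x>y$ explicitly.
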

\begin{proof}
This follows from the fact that $\theta_{\hat\varphi}$ is an automorphism of $\str B$ and the fact that $\varphi$ is a partial automorphism (and thus it preserves the orientation of the edge $(u,\chi_u), (v,\chi_v)$): Given the order of the first coordinates (which is fixed) and the direction of the edge (which is fixed), knowing $\chi(\hat\varphi(v))$ or $\chi_x(\hat\varphi(v))$ respectively determines $\xi(\hat\varphi(u))$ resp. $\chi_y(\hat\varphi(u))$ and vice versa.
\end{proof}

Let $\theta_F$ be the composition of all $\theta_{u,v}$ for $\{u,v\}\in F$ (by Observation~\ref{obs:npart:com}, $\theta_F$ does not depend on the order of the composition). Put $\theta = \theta_{\hat\varphi}\theta_F$.

\begin{prop}
$\theta$ is an automorphism of $\str B$ extending $\varphi$.
\end{prop}
\begin{proof}
The fact that $\theta$ is an automorphism is clear as it is the composition of several automorphisms of $\str B$. Observation~\ref{obs:npart:proj_fine} gives us that $\theta$ and $\varphi$ agree on the first coordinate. From the definition of $F$ and Observation~\ref{obs:Fokay} it follows that if $(u,\chi_u)\in \dom(\varphi)$, $\varphi((u,\chi_u)) = (x,\chi_x)$, and $\theta_{\hat\varphi}((u,\chi_u)) = (x,\chi)$, then $\{u,v\} \in F$ if and only if $\chi_x(v) \neq \chi(v)$. Consequently, $\theta$ indeed extends $\varphi$.
\end{proof}

We are now ready to prove Theorem~\ref{thm:partite}:

\begin{proof}[Proof of Theorem~\ref{thm:partite}]
Given $n \in \{2,\ldots,\omega\}$, let $\str A$ be a finite $n$-partite tournament with $n'$ non-empty parts. Use the construction from this section to construct a finite $n'$-partite tournament $\str B$ with $n'$ non-empty parts. We have proved that $\str B$ is an $n$-partite tournament and that it is an EPPA-witness for $\str A' = \psi(\str A)$. Clearly, by taking an isomorphism, one gets an EPPA-witness for $\str A$.
\end{proof}

\section{Ample generics for $n$-partite tournaments}\label{sec:ample:npart}
The aim of this section is to prove Theorem~\ref{thm:npart_ample}. In order to do so, we first need to review some facts about ample generics.

\subsection{Background}\label{sec:ample:background}
Recall Definition~\ref{defn:ample}. The $n=1$ case was first studied by Truss~\cite{Truss1992}. Later, Hodges, Hodkinson, Lascar, and Shelah~\cite{hodges1993b} proved that the random graph has ample generics (and used Hrushovski's theorem about EPPA for graphs in their proof), and, as a consequence, the small index property. Their methods were abstracted by Kechris and Rosendal~\cite{Kechris2007}:

\begin{definition}
Let $L$ be a language, let $\mathcal C$ be a class of finite $L$-structures and let $n\geq 1$ be an integer. An \emph{$n$-system over $\mathcal C$} is a tuple $(\str A, p_1,\ldots,p_n)$, where $\str A\in \mathcal C$ and $p_1,\ldots,p_n$ are partial automorphisms of $\str A$. We denote by $\mathcal C^n$ the class of all $n$-systems over $\mathcal C$.

If $P=(\str A,p_1,\ldots,p_n)$ and $Q=(\str B,q_1,\ldots,q_n)$ are both $n$-systems over $\mathcal C$ and $f\colon \str A\to\str B$ is an embedding of $L$-structures, we say that $f$ is an \emph{embedding of $n$-systems $P\to Q$} if for every $1\leq i \leq n$ it holds that $f\circ p_i \subseteq q_i\circ f$ (in particular, $f(\dom(p_i))\subseteq \dom(q_i)$ and $f(\range(p_i))\subseteq \range(q_i)$).
\end{definition}

\begin{definition}
Let $L$ be a language, let $\mathcal C$ be a class of finite $L$-structures and let $n\geq 1$ be an integer. We say that $\mathcal C^n$ has the \emph{joint embedding property} if for every $P,Q\in\mathcal C^n$ there exists $S\in \mathcal C^n$ with embeddings of $n$-systems $f\colon P\to S$ and $g\colon Q\to S$. We say that $\mathcal C^n$ has the \emph{weak amalgamation property} if for every $T\in \mathcal C^n$ there exists $\hat{T}\in \mathcal C^n$ and an embedding of $n$-systems $\iota\colon T\to\hat{T}$ such that for every pair of $n$-systems $P,Q\in \mathcal C^n$ and embeddings of $n$-systems $\alpha_1\colon \hat{T}\to P$ and $\alpha_2\colon \hat{T}\to Q$ there exists $S\in \mathcal C^n$ with embeddings of $n$-systems $\beta_1\colon P\to S$ and $\beta_2\colon Q\to S$ such that $\beta_1\alpha_1\iota = \beta_2\alpha_2\iota$.
\end{definition}

\begin{theorem}[Kechris--Rosendal~\cite{Kechris2007}]\label{thm:kr}
Let $\str M$ be a countable locally finite homogeneous structure. For every $n\geq 1$ it holds that $\str M$ has $n$-generic automorphisms if and only if $\age(\str M)^n$ has the joint embedding property and the weak amalgamation property.
\end{theorem}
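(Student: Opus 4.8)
This is the Kechris--Rosendal characterisation~\cite{Kechris2007}, so I will only outline how I would prove it. The plan is to analyse, by Baire category, the diagonal conjugation action of $G=\Aut(\str M)$ on $G^n$. Since $\str M$ is countable, $G$ is a closed subgroup of the symmetric group on $M$, hence Polish, and $G^n$ with the conjugation action is a Polish $G$-space. Using local finiteness I would write $\str M$ as the union of an increasing chain $\str A_0\subseteq\str A_1\subseteq\cdots$ of finite substructures, each in $\mathcal C=\age(\str M)$, such that every finite substructure of $\str M$ lies in some $\str A_k$. For $\bar g=(g_1,\ldots,g_n)\in G^n$ and $k\in\mathbb N$ I would set $\pi_k(\bar g)=(\str A_k,p_1,\ldots,p_n)\in\mathcal C^n$ with $p_i=\{(a,b):a,b\in A_k,\ g_i(a)=b\}$, so that $(\pi_k(\bar g))_k$ is a sequence of finite approximations to $\bar g$.

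First I would set up a dictionary, using homogeneity of $\str M$, between open subsets of $G^n$ (up to conjugacy) and objects of $\mathcal C^n$ (up to embedding of $n$-systems). For $P\in\mathcal C^n$ put $U_P=\{\bar g: P\text{ embeds into }\pi_k(\bar g)\text{ for some }k\}$. One checks that $U_P$ is open (for each $k$ there are only finitely many maps of the underlying structure of $P$ into $\str A_k$, and for each the property of being an embedding of $n$-systems into $\pi_k(\bar g)$ fixes only finitely many values of the $g_i$, hence is clopen in $\bar g$), that $U_P$ is conjugation-invariant (if $P$ embeds into $\pi_k(\bar g)$ then it embeds into $\pi_{k'}(h\bar gh^{-1})$ once $h(\str A_k)\subseteq\str A_{k'}$), and that by homogeneity every nonempty basic open subset of $G^n$ contains a set of the form $\{\bar g:\pi_k(\bar g)\text{ extends }Q\}$ for some $Q\in\mathcal C^n$ and meets $U_Q$. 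From this: $\bar g$ has dense conjugacy class iff $\bar g\in\bigcap_{P}U_P$; $U_P$ is dense iff $P$ and every $Q\in\mathcal C^n$ jointly embed into some $S\in\mathcal C^n$; and hence $\mathcal C^n$ has the joint embedding property iff every $U_P$ is dense iff $\bigcap_P U_P$ is comeagre. Moreover a comeagre orbit is automatically dense and, being invariant and meeting every nonempty invariant open set, is contained in every $U_P$, so a comeagre orbit forces the joint embedding property.

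Granting the joint embedding property, let $D=\bigcap_P U_P$, the dense $G_\delta$ invariant set of tuples with dense conjugacy class; the task becomes to decide when $D$ contains a single comeagre orbit. For this I would invoke the Banach--Mazur game criterion for comeagre orbits of continuous Polish group actions (as in~\cite{Kechris2007}; see also~\cite{Truss1992} for $n=1$): $D$ contains a comeagre orbit iff for every $P\in\mathcal C^n$ there is an embedding $\iota\colon P\to\hat P$ such that any two extensions $\alpha_1\colon\hat P\to P_1$ and $\alpha_2\colon\hat P\to P_2$ admit a common extension $\beta_1\colon P_1\to S$, $\beta_2\colon P_2\to S$ with $\beta_1\alpha_1\iota=\beta_2\alpha_2\iota$; that is, iff $\mathcal C^n$ has the weak amalgamation property. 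From weak amalgamation to a comeagre orbit I would build, by a back-and-forth that uses joint embedding to stay inside $D$ and uses the bases $\hat P$ to record reusable amalgamation commitments, a tuple $\bar g$ together with a winning strategy for the amalgamation player, ensuring that every $\bar h\in D$ can be conjugated so as to match the approximations of $\bar g$ cofinally often, whence the orbit of $\bar g$ is comeagre. Conversely, if weak amalgamation fails at some $T\in\mathcal C^n$, then for every choice of extension $\hat T$ there are two further extensions whose $G$-saturations can be kept disjoint cofinally often, yielding two disjoint nonmeagre invariant subsets of $D$ and hence no comeagre orbit.

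I expect the main obstacle to be the implication from the weak amalgamation property to the existence of a comeagre orbit: one must run the back-and-forth so that commitments made when passing to a weak amalgamation base are never destroyed by later moves, and verify that the resulting Banach--Mazur strategy is winning. This is precisely the point where ``dense orbit'' is upgraded to ``comeagre orbit'', and it explains why the amalgamation in the definition need only agree on the image of $T$ rather than on all of $\hat T$. A secondary, more routine, obstacle is making the dictionary of the second paragraph fully precise, i.e.\ checking that homogeneity and local finiteness genuinely allow one to pass between nonempty basic open subsets of $G^n$ modulo conjugacy and objects of $\mathcal C^n$ modulo embedding of $n$-systems.
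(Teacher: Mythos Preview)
The paper does not prove this statement at all: Theorem~\ref{thm:kr} is quoted from Kechris--Rosendal~\cite{Kechris2007} as a black box and is immediately followed by the next definition, with no argument given. So there is no ``paper's own proof'' to compare your proposal against.

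Your sketch is a reasonable outline of the standard Kechris--Rosendal argument (the dictionary between basic open sets in $G^n$ and $n$-systems, the identification of dense conjugacy classes with the joint embedding property, and the Banach--Mazur/back-and-forth analysis for upgrading to a comeagre orbit via weak amalgamation). As a sketch it is broadly correct, and you have correctly identified the delicate point as the back-and-forth that preserves amalgamation commitments. But since the paper treats this theorem as a citation rather than something to be proved, no proof was expected here; a one-line reference to~\cite{Kechris2007} (and perhaps to Ivanov's earlier work and Truss for $n=1$) would suffice.
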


In order to explain the connection between EPPA and ample generics, we need one more standard definition (which we give in a slightly more general way as it is going to be convenient later when we apply it):

\begin{definition}
Let $L$ be a language, let $\mathcal C$ be a class of finite $L$-structures, and let $\mathcal C' \subseteq \mathcal C$. We say that $\mathcal C$ has the \emph{amalgamation property with automorphisms} (abbreviated as \emph{APA}) \emph{over $\mathcal C'$} if for every $\str{A}\in \mathcal C'$, every $\str{B}_1,\str{B}_2\in \mathcal C$, and embeddings $\alpha_1\colon\str{A}\to\str{B}_1$, $\alpha_2\colon\str{A}\to\str{B}_2$ there exists $\str{C}\in \mathcal C$ with embeddings $\beta_1\colon\str{B}_1 \to \str{C}$ and $\beta_2\colon\str{B}_2\to\str{C}$ such that $\beta_1\circ\alpha_1 = \beta_2\circ\alpha_2$ (i.e. $\str C$ is an amalgamation of $\str B_1$ and $\str B_2$ over $\str A$ with respect to $\alpha_1$ and $\alpha_2$), and moreover for every $f\in \Aut(\str B_1)$ and $g\in \Aut(\str B_2)$ there is $h\in \Aut(\str C)$ which extends $\beta_1 f \beta_1^{-1} \cup\beta_2 g \beta_2^{-1}$ provided that $f(\alpha_1(A)) = \alpha_1(A)$, $g(\alpha_2(A)) = \alpha_2(A)$, and $\alpha_1^{-1}f\alpha_1 = \alpha_2^{-1}g\alpha_2$ (that is, $f$ and $g$ agree on the copy of $\str A$ we are amalgamating over). We call such $\str C$ with embeddings $\beta_1$ and $\beta_2$ an \emph{APA-witness} for $\str B_1$ and $\str B_2$ over $\str A$ with respect to $\alpha_1$ and $\alpha_2$. If $\mathcal C' = \mathcal C$ we drop the ``over $\mathcal C'$'' part.
\end{definition}

\begin{prop}[Kechris--Rosendal~\cite{Kechris2007}]\label{prop:ample_apa}
Let $L$ be a language, and let $\mathcal C'\subseteq \mathcal C$ be classes of finite $L$-structures. Suppose that $\mathcal C$ has APA over $\mathcal C'$ and that for every $\str A\in \mathcal C$ there is $\str B \in \mathcal C'$ which is an EPPA-witness for $\str A$. Then $\mathcal C^n$ has the weak amalgamation property for every $n\geq 1$. If $\mathcal C'$ contains the empty structure then $\mathcal C^n$ also has the joint embedding property for every $n\geq 1$.
\end{prop}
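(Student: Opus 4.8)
The plan is to prove the weak amalgamation property for $\mathcal C^n$ by using an EPPA-witness as the ``magic'' structure $\hat T$. Given an $n$-system $T = (\str A, p_1,\ldots,p_n) \in \mathcal C^n$, let $\str B \in \mathcal C'$ be an EPPA-witness for $\str A$, with $\str A \subseteq \str B$ (taking an isomorphic copy if necessary). Each partial automorphism $p_i$ of $\str A$ is then a partial automorphism of $\str B$, so by the EPPA property it extends to some $\hat p_i \in \Aut(\str B)$. Set $\hat T = (\str B, \hat p_1,\ldots,\hat p_n)$ and let $\iota\colon T \to \hat T$ be the inclusion; this is an embedding of $n$-systems precisely because $\hat p_i$ extends $p_i$, so $\iota \circ p_i \subseteq \hat p_i \circ \iota$. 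The point of this choice is that in $\hat T$ the partial automorphisms have become total, which is exactly what lets us invoke APA.

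Now suppose we are given embeddings of $n$-systems $\alpha_1\colon \hat T \to P$ and $\alpha_2\colon \hat T \to Q$, where $P = (\str C_1, r_1,\ldots,r_n)$ and $Q = (\str C_2, s_1,\ldots,s_n)$. On the level of structures we have embeddings $\alpha_1\colon \str B \to \str C_1$ and $\alpha_2\colon \str B \to \str C_2$. We apply APA for $\mathcal C$ over $\mathcal C'$ (legitimate since $\str B \in \mathcal C'$): there is $\str D \in \mathcal C$ with embeddings $\beta_1\colon \str C_1 \to \str D$ and $\beta_2\colon \str C_2 \to \str D$ with $\beta_1\alpha_1 = \beta_2\alpha_2$. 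To get the extra conclusion of APA applied to automorphisms, I would like to feed in automorphisms of $\str C_1$ and $\str C_2$ that restrict along $\alpha_1,\alpha_2$ to the same automorphism of $\str B$. The natural candidates are $r_i' := $ (some automorphism of $\str C_1$ extending $r_i \restriction \alpha_1(B)$) and similarly $s_i'$ on $\str C_2$; but $r_i$ need not be total on $\str C_1$, so this requires care. The clean fix is to observe that since $\alpha_1$ is an embedding of $n$-systems $\hat T \to P$ and $\hat p_i$ is total on $\str B$, we have $\alpha_1 \circ \hat p_i \subseteq r_i \circ \alpha_1$, so $r_i$ is defined on all of $\alpha_1(B)$ and $\alpha_1^{-1} r_i \alpha_1 \restriction B = \hat p_i$; the same holds for $s_i$ on $\alpha_2(B)$. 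If $\mathcal C$ has EPPA one can even take $\str C_j$ to be closed under extending these, but in general I would instead enlarge $P$ and $Q$ first: replace $\str C_1$ by an EPPA-witness $\str C_1^\ast$ for it and extend each $r_i$ to $\bar r_i \in \Aut(\str C_1^\ast)$, and similarly for $Q$; since these enlargements embed $n$-systems, we may assume from the start that each $r_i$ and $s_i$ is a total automorphism of $\str C_1$, resp.\ $\str C_2$, agreeing with $\hat p_i$ on the copy of $\str B$. Then APA (applied with $f = r_i$, $g = s_i$, noting $r_i(\alpha_1(B)) = \alpha_1(B)$ etc.) yields $h_i \in \Aut(\str D)$ extending $\beta_1 r_i \beta_1^{-1} \cup \beta_2 s_i \beta_2^{-1}$. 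Setting $S = (\str D, h_1,\ldots,h_n)$, the maps $\beta_1\colon P \to S$ and $\beta_2\colon Q \to S$ are embeddings of $n$-systems because $h_i$ extends $\beta_1 r_i \beta_1^{-1}$, and $\beta_1\alpha_1\iota = \beta_2\alpha_2\iota$ holds on structures; this is the required weak amalgam.

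For the joint embedding property, given $P, Q \in \mathcal C^n$ we take $T$ to be the empty $n$-system (which lies in $\mathcal C^n$ since the empty structure is in $\mathcal C'$ by hypothesis), whose witness $\hat T$ is then also (built over) the empty structure, and apply the weak amalgamation construction: the unique embeddings of the empty system into $P$ and into $Q$ play the roles of $\alpha_1,\alpha_2$, and the output $S$ embeds both $P$ and $Q$. In fact a direct argument also works: enlarge $P$ and $Q$ to have total automorphisms as above, amalgamate the underlying structures freely (or via any joint embedding in $\mathcal C$) and extend the two families of automorphisms to $\str D$ using APA over the empty structure.

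The main obstacle is the partiality of the $p_i$, $r_i$, $s_i$: APA is a statement about \emph{full} automorphisms of $\str B_1,\str B_2$, whereas the $n$-systems only carry partial automorphisms. The device that resolves this is exactly the one the proposition is engineered around — using an EPPA-witness at each stage ($\str B$ for $\str A$, and $\str C_j^\ast$ for $\str C_j$) to promote the relevant partial automorphisms to total ones before invoking APA, which is why the hypothesis requires EPPA-witnesses to live in $\mathcal C'$, the class over which APA is assumed. Once this promotion is in place the rest is bookkeeping: checking that all the relevant maps are genuinely embeddings of $n$-systems (i.e.\ that the inclusions $\iota\circ p_i \subseteq \hat p_i\circ\iota$ and $h_i \supseteq \beta_j(\cdot)\beta_j^{-1}$ hold) and that the base equality $\beta_1\alpha_1\iota = \beta_2\alpha_2\iota$ is inherited from the structural amalgam.
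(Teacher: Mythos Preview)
Your proof is correct and follows essentially the same approach as the paper's: promote $T$ to $\hat T$ via an EPPA-witness in $\mathcal C'$ so that the partial automorphisms become total, then (after similarly enlarging $P$ and $Q$ to have total automorphisms) apply APA over $\hat{\str T}$ to produce the amalgam $S$. The paper's version is more terse---it introduces the ``hat'' operation $S\mapsto\hat S$ uniformly and simply says to take the APA-witness for $\hat{\str P}$ and $\hat{\str Q}$ over $\hat{\str T}$---but your explicit discussion of why the enlargement of $P,Q$ is needed (so that the $r_i,s_i$ become genuine automorphisms eligible for the APA clause) fills in exactly the step the paper leaves implicit.
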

\begin{proof}
Fix $n\geq 1$. If $S = (\str S,s_1,\ldots,s_n) \in \mathcal C^n$ is an $n$-system, we denote by $\hat{S} = (\hat{\str S}, \hat{s}_1,\ldots,\hat{s}_n)\in \mathcal C^n$ the $n$-system where $\hat{\str S} \in \mathcal C'$ is an EPPA-witness for $\str S$ and for every $1\leq i\leq n$ it holds that $\hat{s}_i$ is an automorphism of $\hat{\str S}$ extending $s_i$.

We now prove that $\mathcal C^n$ has the weak amalgamation property. Towards that, fix some $T = (\str T, t_1,\ldots,t_n)\in \mathcal C^n$. Let $P=(\str P,p_1,\ldots,p_n),Q=(\str Q,q_1,\ldots,q_n)\in \mathcal C^n$ be arbitrary $n$-systems with embeddings $\alpha_1\colon \hat{T}\to P$ and $\alpha_2\colon \hat{T}\to Q$.

Use APA for $\mathcal C$ over $\mathcal C'$ to get $\str S\in \mathcal C$ and embeddings $\beta_1\colon\hat{\str P} \to \str S$ and $\beta_2\colon\hat{\str Q}\to\str S$ such that $\str S$ with $\beta_1$ and $\beta_2$ form an APA-witness for $\hat{\str P}$ and $\hat{\str Q}$ over $\hat{\str{T}}$ with respect to $\alpha_1$ and $\alpha_2$. Clearly, $S$ is the desired $n$-system witnessing the weak amalgamation property for $P$, $Q$ and $T$.

The joint embedding property is simply the amalgamation property over the empty $n$-system.
\end{proof}

\subsection{Proof of Theorem~\ref{thm:npart_ample}}

\begin{lemma}\label{lem:omegapart_apa}
$\omega$-partite tournaments have APA.
\end{lemma}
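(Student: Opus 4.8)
The plan is to verify the amalgamation property with automorphisms for the class of all finite $\omega$-partite tournaments by giving an explicit construction of the amalgam. So fix $\str A$, $\str B_1$, $\str B_2$ and embeddings $\alpha_i\colon \str A\to \str B_i$; without loss of generality we may assume $\alpha_1,\alpha_2$ are inclusions and $\str B_1\cap \str B_2 = \str A$ as sets. The natural candidate for the amalgam is the \emph{free amalgam}: take $C = B_1\cup B_2$, keep all edges of $\str B_1$ and of $\str B_2$, and for $x\in B_1\setminus A$ and $y\in B_2\setminus A$ we must decide whether they lie in a common part and, if not, how to orient the edge between them. The key point is that the class of $\omega$-partite tournaments is exactly the class of directed graphs whose ``non-edge'' relation is an equivalence relation (of arbitrary index, since $n=\omega$), so I would put $x$ and $y$ into \emph{different} parts whenever $x\in B_1\setminus A$, $y\in B_2\setminus A$, and orient the new edge arbitrarily but canonically — say, always from the $\str B_1$-side to the $\str B_2$-side. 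One checks immediately that the resulting $\str C$ is again an $\omega$-partite tournament: within $\str B_1$ and within $\str B_2$ the part structure is inherited, and no new part-merging is introduced across the two sides, so the non-edge relation on $C$ is still an equivalence relation; and $\beta_1,\beta_2$ (the inclusions) are embeddings with $\beta_1\alpha_1 = \beta_2\alpha_2$.

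The substantive part is the automorphism extension. Given $f\in\Aut(\str B_1)$ and $g\in\Aut(\str B_2)$ with $f(A)=A$, $g(A)=A$ and $f\restriction A = g\restriction A$, I define $h\colon C\to C$ by $h\restriction B_1 = f$ and $h\restriction B_2 = g$; this is well-defined since $f$ and $g$ agree on $B_1\cap B_2 = A$, and it is a bijection of $C$. It remains to check that $h$ preserves the directed-graph structure. For a pair of vertices both in $B_1$, or both in $B_2$, this is immediate from $f$, resp. $g$, being automorphisms. The only pairs left are $x\in B_1\setminus A$, $y\in B_2\setminus A$: here $h(x) = f(x)$. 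Now either $f(x)\in A$ or $f(x)\in B_1\setminus A$, and either $g(y)\in A$ or $g(y)\in B_2\setminus A$; since $f(A)=A$ and $x\notin A$ we get $f(x)\notin A$, and similarly $g(y)\notin A$, so in fact $h(x)\in B_1\setminus A$ and $h(y)\in B_2\setminus A$. By construction of $\str C$, the edge between any vertex of $B_1\setminus A$ and any vertex of $B_2\setminus A$ is present and oriented from the $\str B_1$-side to the $\str B_2$-side; hence the edge $xy$ and the edge $h(x)h(y)$ carry the same relation. Therefore $h\in\Aut(\str C)$, and it extends $\beta_1 f\beta_1^{-1}\cup\beta_2 g\beta_2^{-1}$ as required.

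The one point demanding a little care — and the main (minor) obstacle — is the interaction between the ``new'' cross edges and the part structure: one must make sure that declaring every $x\in B_1\setminus A$ and $y\in B_2\setminus A$ to be in different parts does not contradict any non-edge already forced inside $\str B_1$ or $\str B_2$ (it does not, since those non-edges stay within one side), and, crucially, that $f$ cannot map a vertex of $B_1\setminus A$ to a vertex of $A$ — which is exactly why the hypothesis $f(A)=A$ (equivalently $f(\alpha_1(A))=\alpha_1(A)$) is indispensable, and likewise $g(A)=A$. With this observation the verification is routine. Finally, since the empty $\omega$-partite tournament belongs to the class, the same free-amalgamation construction over the empty structure also gives the joint embedding property, though for the present lemma only APA is asserted.
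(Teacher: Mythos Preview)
Your construction is not quite right: declaring every $x\in B_1\setminus A$ and $y\in B_2\setminus A$ to lie in different parts can break the part structure. Concretely, suppose $a\in A$, $x\in B_1\setminus A$ is in the same part of $\str B_1$ as $a$, and $y\in B_2\setminus A$ is in the same part of $\str B_2$ as $a$. Then in your $\str C$ there is no edge $ax$ and no edge $ay$, but you have inserted an edge $xy$; hence the non-edge relation on $\str C$ is not transitive and $\str C$ is not an $\omega$-partite tournament. Your parenthetical reassurance (``it does not, since those non-edges stay within one side'') overlooks exactly this transitivity-through-$A$ phenomenon.

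The paper's proof avoids this by first defining the partition on $C=B_1\cup B_2$ to be the common refinement that \emph{merges} a part of $\str B_1$ with a part of $\str B_2$ whenever they share a vertex of $A$; only pairs $x\in B_1\setminus A$, $y\in B_2\setminus A$ that end up in different parts after this merging receive the ``from $\str B_1$-side to $\str B_2$-side'' edge. With that correction your automorphism argument goes through almost verbatim, but you must add one case: when $x\in B_1\setminus A$ and $y\in B_2\setminus A$ lie in the same merged part, check that $h(x)$ and $h(y)$ also do (this uses that $f_1$ and $f_2$ agree on $A$ and send parts to parts). Apart from this omitted case and the faulty partition, your overall strategy is the same as the paper's.
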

\begin{proof}
Let $\str A$ be an $\omega$-partite tournament, and let $\beta_1\colon \str A\to \str B_1$ and $\beta_2\colon \str A\to \str B_2$ be embeddings. Without loss of generality we can assume that both $\beta_1$ and $\beta_2$ are inclusions and that $B_1\cap B_2 = A$.

Put $C = B_1\cup B_2$. We first define a partition of $C$ such that $x,y\in C$ are in the same part of $C$ if either $x,y$ are in the same part of $\str B_1$, or they are in the same part of $\str B_2$, or there is $z\in A$ such that $x,z$ are in the same part of $\str B_1$ and $z,y$ are in the same part of $\str B_2$.

We will define an $\omega$-partite tournaments $\str C$ with vertex set $C$ which is an amalgamation of $\str B_1$ and $\str B_2$ over $\str A$. Given $u,v$ from different parts of $C$, if $u,v\in B_1$ or $u,v\in B_2$, we orient the edge $uv$ according to $\str B_1$ resp. $\str B_2$. Otherwise we have without loss of generality $u\in B_1\setminus B_2$ and $v\in B_2\setminus B_1$ and we orient the edge from $u$ to $v$.

Clearly, $\str C$ is an $\omega$-partite tournament, and an amalgamation of $\str B_1$ and $\str B_2$ over $\str A$. Now let $f_1\colon \str B_1\to \str B_1$ and $f_2\colon \str B_2\to \str B_2$ be automorphisms such that $f_1\restriction_A = f_2\restriction_A$. Put $f = f_1\cup f_2$, and observe that $f$ is a bijection $C\to C$ which preserves the partition of $\str C$: The fact that it is a bijection follows from the definition of $C$ and from $f_1\restriction_A = f_2\restriction_A$. It preserves the parts of $\str C$ because every part of $\str C$ is either a part of $\str B_1$ or $\str B_2$ (in which case it is preserved by $f_1$ or $f_2$ respectively), or the union of one part of $\str B_1$ and one part of $\str B_2$ which have non-empty intersection in $\str A$ (and in this case $f_1$ and $f_2$ agree on this intersection).

If $u,v\in B_1$ or $u,v\in B_2$ then $f$ preserves the direction of the edge between them (or the non-existence thereof). If $u\in B_1\setminus B_2$ and $v\in B_2\setminus B_1$ then either they are in the same part of $C$ (and so are $f(u)$ and $f(v)$), or they are in different parts, but then the edges are oriented from $u$ to $v$ and from $f(u)$ to $f(v)$. Hence $f$ is an automorphism of $\str C$.
\end{proof}

Now we can prove Theorem~\ref{thm:npart_ample}.
\begin{proof}[Proof of Theorem~\ref{thm:npart_ample}]
First consider the case $n=\omega$. Then Theorem~\ref{thm:kr} and Proposition~\ref{prop:ample_apa} together with Theorem~\ref{thm:partite} and Lemma~\ref{lem:omegapart_apa} give the desired result.

If $n$ is finite, let $\str B_0$ be the oriented 4-cycle, which is a 2-partite tournament. Let $\str B$ be the $n$-partite tournament obtained by adding $n-2$ one-vertex parts to $\str B_0$ such that all edges between $u\in B_0$ and $v\in B\setminus B_0$ are oriented from $u$ to $v$, and the edges between vertices of $B\setminus B_0$ are oriented arbitrarily. Let $f$ be the automorphism of $\str B$ fixing $B\setminus B_0$ pointwise and rotating $\str B_0$ by one vertex (so, in particular, it exchanges the two parts of $\str B_0$). Then, clearly, the 1-systems $(\str B, \mathrm{id})$ and $(\str B, f)$ have no joint embedding: Any 1-system to which $(\str B, \mathrm{id})$ embeds fixes all parts, while $(\str B, f)$ does not fix all parts. Consequently, $n$-partite tournaments do not have 1-generic automorphisms by Theorem~\ref{thm:kr}.

By Theorem~1.1 of~\cite{Kechris2007}, the joint embedding property for 1-systems over $\mathcal C$ is equivalent to the automorphism group of the \Fraisse{} limit of $\mathcal C$ having a dense conjugacy class from which the last part of the theorem follows.
\end{proof}

\begin{remark}
Let us remark that for every $n\geq 2$, $n$-partite tournaments have APA over structures which have all $n$ parts non-empty. Indeed, let $\str A$, $\str B_1$ and $\str B_2$ be $n$-partite tournaments such that $\str A$ is a substructure of both $\str B_1$ and $\str B_2$, $B_1\cap B_2 = A$, and $\str A$ has all $n$ parts non-empty. Observe that the structure $\str C$ produced by Lemma~\ref{lem:omegapart_apa} is in fact an $n$-partite tournament (because it merges each part of $\str B_1$ with exactly one part of $\str B_2)$ which proves the claim.

As we have seen in the proof of Theorem~\ref{thm:npart_ample}, they do not have APA over the empty structure. The same situation happens for equivalences with $n$ equivalence classes and it is a consequence of these classes having non-trivial algebraic closure of the empty set after eliminating imaginaries (namely, the representatives of the equivalence classes are in the algebraic closure). If one only considers automorphisms which agree on $\mathrm{acl}(\emptyset)$ then it is possible to do the whole construction and obtain generic automorphisms with a fixed action on $\mathrm{acl}(\emptyset)$.
\end{remark}

\section{EPPA for semigeneric tournaments}\label{sec:semig}
In this section we will prove Theorem~\ref{thm:semigeneric}. The construction is slightly more involved than the one for $n$-partite tournaments, but there is a natural example showing why this is necessary. In order to present it, we need to start with the following observation:

\begin{observation}\label{obs:semig_equiv_weak}
Let $\str A$ be a semigeneric tournament and let $X$ and $Y$ be a pair of parts of $\str A$. For every $a,b\in X$, if there is $c\in Y$ such that the edges $ac$ and $bc$ have the same orientation then for every $d\in Y$ the edges $ad$ and $bd$ have the same orientation.
\end{observation}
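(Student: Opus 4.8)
The plan is to reduce to the nontrivial case and then read off the conclusion directly from the parity condition in the definition of semigenericity. First I would dispose of the degenerate cases: if $a=b$ there is nothing to prove, and if $d=c$ the conclusion is the hypothesis; so I may assume $a\neq b\in X$ and $c\neq d\in Y$, which is exactly the configuration to which the semigenericity axiom applies.

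The key bookkeeping step is to encode orientations by a $\{0,1\}$-valued function: for $x\in X$ and $y\in Y$ set $\varepsilon(x,y)=1$ if the edge between $x$ and $y$ is directed from $x$ to $y$, and $\varepsilon(x,y)=0$ otherwise. Then two edges $xy$ and $x'y$ with a common endpoint $y\in Y$ have the same orientation precisely when $\varepsilon(x,y)=\varepsilon(x',y)$, i.e. precisely when $\varepsilon(x,y)+\varepsilon(x',y)\equiv 0\pmod 2$. Moreover, the number of edges directed from $\{a,b\}$ to $\{c,d\}$ is exactly
$$\varepsilon(a,c)+\varepsilon(a,d)+\varepsilon(b,c)+\varepsilon(b,d),$$
so semigenericity says this sum is $\equiv 0 \pmod 2$.

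Now the conclusion is immediate: by hypothesis $ac$ and $bc$ have the same orientation, so $\varepsilon(a,c)+\varepsilon(b,c)\equiv 0\pmod 2$; subtracting this from the displayed even sum gives $\varepsilon(a,d)+\varepsilon(b,d)\equiv 0\pmod 2$, which means $ad$ and $bd$ have the same orientation, as desired.

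I do not expect any genuine obstacle here — the whole content is the parity identity above, and the only thing to be careful about is to handle the cases $a=b$ and $c=d$ separately before invoking the semigenericity axiom (which is only stated for distinct pairs).
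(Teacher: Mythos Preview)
Your proof is correct and follows essentially the same approach as the paper: both arguments apply the semigenericity parity condition to the quadruple $\{a,b,c,d\}$ and observe that if the two edges through $c$ contribute even parity, then so must the two edges through $d$. Your version simply makes the bookkeeping explicit via the $\varepsilon$ function and separately handles the degenerate cases $a=b$ and $c=d$, which the paper omits as trivial.
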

\begin{proof}
Since $\str A$ is semigeneric, we have that the number of edges going from $\{a,b\}$ to $\{c,d\}$ is even. We know that the edges $ac$ and $bc$ go in the same direction, and hence $ad$ and $bd$ need to as well.
\end{proof}

\begin{example}\label{ex:semig_bad}
Let $I$ and $O$ be disjoint sets not containing $v$ such that $\lvert I\rvert = \lvert O\rvert = n$ for some $n$. Let $\str A$ be the 2-partite semigeneric tournament with parts $A_1$ and $A_2$ such that $A_1 = I\cup O$, $A_2 = \{v\}$, and the edges are oriented from $v$ to members $I$ and from members of $O$ to $v$. Assume that $\str B$ is a semigeneric tournament which is an EPPA-witness for $\str A$. We will show that for every $S\subseteq I$ there is a vertex $v_S\in B$ with edges oriented from $v_S$ to $S$ and from $I\setminus S$ to $v_S$. Note that only $v_S$ and $v_{I\setminus S}$ can be in the same part of $\str B$ -- this follows from Observation~\ref{obs:semig_equiv_weak}. Consequently, this implies that $\str B$ needs to have at least $2^{n-1}$ parts.

To see this, fix $S\subseteq I$, and let $\varphi\colon A_1\to A_1$ be some permutation satisfying that $\varphi(i) = i$ for every $i\in S$ and $\varphi(i)\in O$ for every $i\in I\setminus S$ (it certainly exists). Notice that $\varphi$ is a partial automorphism of $\str A$, and as such, extends to an automorphism $\theta\colon \str B\to \str B$. We can then put $v_S = \theta(v)$.
\end{example}

\medskip

We now continue with the construction. For the rest of the section, fix a finite semigeneric tournament $\str A$ with vertex set $A$ and with $k$ non-empty parts $A_1, \ldots, A_k$. Put $P=\{1,\ldots,k\}$ and put $n = \lvert A\rvert$. We will give an explicit construction of a semigeneric tournament $\str B$ which is an EPPA-witness for $\str A$. Without loss of generality we can assume that $ \lvert A_1\rvert =  \lvert A_2\rvert = \cdots = \lvert A_k\rvert$, and that $A = \{1,\ldots,n\}$. (We will only use that $A$ is linearly ordered by $\leq$.)

\subsection{Witness construction}\label{sec:semig:const}
Let $Q$ be the set of all pairs $(A_i, h)$, where $i\in P$ and $h\colon A\to \mathbb Z_2$. The vertex set $B$ of $\str B$ will consist of all triples $((A_i, h), v, \chi)$, where $(A_i, h)\in Q$, $v\in A_i$ and $\chi\colon Q\to \mathbb Z_2$.

The parts of $\str B$ will be indexed by elements of $Q$ and for this reason we call functions $h\colon A\to \mathbb Z_2$ \emph{part valuation functions}. Their role is to``multiply'' the parts of $\str A$ and eventually allow us to extend partial automorphisms such as the one from Example~\ref{ex:semig_bad}. On the other hand, functions $\chi\colon Q\to \mathbb Z_2$ will be called \emph{vertex valuation functions} and they will play a very similar role as valuation functions in Section~\ref{sec:npart}. Observation~\ref{obs:semig_equiv_weak} suggests that very little information is needed to determine the directions of all edges between a pair of parts, and for this reasons the domain of vertex valuation functions is just $Q$ instead of, say, $A\setminus A_i$ like in Section~\ref{sec:npart}. 

We will denote part valuation functions by letters $g$ an $h$ and their decorations such as $g'$ or $h_0$, and vertex valuation functions by Greek letters $\chi$, $\xi$, and $\zeta$, and their decorations such as $\chi_x$ or $\zeta'$. For the rest of this section let $h_0\colon A\to \mathbb Z_2$ be the constant 0 part valuation function.

Let $\triangleleft$ be the order on part valuation functions defined by $g\triangleleft h$ if and only if $g\neq h$ and $g(v) = 0$ for $v\in A$ being the $\leq$-least vertex for which $g(v)\neq h(v)$. Note that $h_0$ is the minimum of $\triangleleft$. Let $\prec$ be the linear order on $Q$ defined by $(A_i, h) \prec (A_j, h')$ if and only if either $i < j$ or $i=j$ and $h \triangleleft h'$.

Given $\str x = ((A_i, h), x, \chi)\in B$ and $\str y = ((A_p, g), y, \xi)\in B$ such that $(A_i,h)\neq (A_p,g)$, we define (recall that all operations are performed in $\mathbb Z_2$)
$$V(\str x,\str y) = \chi((A_p, g)) + h(y) + \xi((A_i, h)) + g(x).$$

We can now define the edges of $\str B$: We connect $\str v=((A_i, h), v, \chi)$ with $\str v'=((A_{i'}, h'), v', \chi')$ in $\str B$ if and only if $(A_i, h)\neq(A_{i'}, h')$. The edge is oriented from $\str v$ to $\str v'$ if and only if one of the following is satisfied (otherwise it is oriented from $\str v'$ to $\str v$).
\begin{enumerate}
\item $(A_i, h)\prec(A_{i'}, h')$ and $V(\str v,\str v') = 0$, or
\item $(A_i, h)\succ(A_{i'}, h')$ and $V(\str v,\str v') = 1$.
\end{enumerate}

\begin{lemma}
$\str B$ is a semigeneric tournament.
\end{lemma}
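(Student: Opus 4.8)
The statement to prove is that the structure $\str B$ constructed in Section~\ref{sec:semig:const} is a semigeneric tournament. There are three things to check: that $\str B$ is a directed graph (irreflexive and antisymmetric), that it is an $\omega$-partite tournament (the relation between parts is exactly as required), and that it satisfies the parity condition defining semigenericity. The first two are essentially immediate from the construction: the parts are indexed by $Q$, with the part of $\str x = ((A_i,h),x,\chi)$ being $(A_i,h)$, and by definition two vertices are adjacent if and only if their $Q$-labels differ, so there are no edges inside a part and exactly one directed edge between every pair of vertices in distinct parts (the two cases based on $\prec$ and on the value of $V$ are mutually exclusive and jointly exhaustive, giving antisymmetry and totality). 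So the real content is the parity condition.

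\textbf{The parity computation.} Fix two distinct parts $(A_i,h)$ and $(A_p,g)$ of $\str B$; without loss of generality assume $(A_i,h)\prec (A_p,g)$. Take $\str a,\str b$ in the part $(A_i,h)$ and $\str c,\str d$ in the part $(A_p,g)$, say $\str a = ((A_i,h),a,\chi_{\str a})$, $\str b = ((A_i,h),b,\chi_{\str b})$, $\str c = ((A_p,g),c,\xi_{\str c})$, $\str d = ((A_p,g),d,\xi_{\str d})$. Since $(A_i,h)\prec(A_p,g)$, the edge from $\str x \in \{\str a,\str b\}$ to $\str y \in \{\str c,\str d\}$ is oriented \emph{from} $\str x$ \emph{to} $\str y$ precisely when $V(\str x,\str y)=0$. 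Hence the number of edges directed from $\{\str a,\str b\}$ to $\{\str c,\str d\}$ has the same parity (mod $2$) as
$$1 + V(\str a,\str c) + V(\str a,\str d) + V(\str b,\str c) + V(\str b,\str d)$$
(each summand equal to $0$ contributes one edge; working mod $2$, a term $V=0$ contributes $1$ and $V=1$ contributes $0$, so the count is $4 - \sum V \equiv \sum V \pmod 2$ — I will phrase this carefully, the point being only that parity of the edge count is determined by $\sum V$ mod $2$, possibly up to an irrelevant constant). Now expand each $V(\str x,\str y) = \chi_{\str x}((A_p,g)) + h(y) + \xi_{\str y}((A_i,h)) + g(x)$ using the definition. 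In the sum over the four pairs, the term $\chi_{\str a}((A_p,g))$ appears for $\str y\in\{\str c,\str d\}$, i.e. twice, and likewise $\chi_{\str b}((A_p,g))$ twice, $\xi_{\str c}((A_i,h))$ twice, $\xi_{\str d}((A_i,h))$ twice; and $h(c)+h(d)$ appears twice (once from $\str a$, once from $\str b$) and $g(a)+g(b)$ appears twice. So every single term cancels mod $2$, and the total sum is $0$. Therefore the number of edges from $\{\str a,\str b\}$ to $\{\str c,\str d\}$ is even, which is exactly the semigenericity condition.

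\textbf{Remarks on the argument.} The key structural reason this works is that $V(\str x,\str y)$ is a sum of four terms each of which depends on only one of the two vertices' "free" data (the vertex valuation $\chi$, or the $A$-coordinate fed into the other part's part valuation function), so in the fourfold sum $\sum_{\str x\in\{\str a,\str b\}, \str y\in\{\str c,\str d\}} V(\str x,\str y)$ every term is counted an even number of times. This is precisely the design principle behind restricting vertex valuation functions to have domain $Q$ rather than $A$ — it makes $V$ "bilinear" in a way that forces the parity condition for free. I expect no genuine obstacle here; the only care needed is the bookkeeping translating "number of edges oriented from $\{\str a,\str b\}$ to $\{\str c,\str d\}$" into a $\mathbb{Z}_2$-sum of the four $V$-values (handling the $(A_i,h)\prec(A_p,g)$ versus $\succ$ cases, which only flips whether $V=0$ or $V=1$ means "outgoing" and hence only shifts the count by the constant $4$, i.e. does not change its parity), plus noting at the outset that the adjacency and antisymmetry of $\str B$ are built into the definition. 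I would write the edge-count translation as a short displayed computation and then the term-cancellation as a single displayed line, keeping the whole proof to a paragraph or two.
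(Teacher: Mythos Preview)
Your proof is correct. The approach, however, differs from the paper's. You compute the $\mathbb Z_2$-sum $\sum_{\str x\in\{\str a,\str b\},\,\str y\in\{\str c,\str d\}} V(\str x,\str y)$ directly and observe that each of the eight constituent terms occurs exactly twice, forcing the sum to vanish; this exploits the ``bilinear'' shape of $V$ head-on. The paper instead argues by reduction: it notes that flipping a single relevant bit (e.g.\ $h'(v_1)$, or $\chi_1((A_{i'},h'))$) changes the directions of either two or four of the four edges and hence preserves the parity of the count, so one may assume without loss of generality that all those bits are $0$, in which case all four edges point the same way. Your cancellation argument is shorter and makes the structural reason transparent; the paper's symmetry-reduction is slightly more hands-on but has the minor advantage of not needing to spell out the translation between ``number of edges oriented from $\{\str a,\str b\}$'' and the $\mathbb Z_2$-sum of the $V$-values (a step you flagged as needing care, though it is routine). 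Either route is fine.
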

\begin{proof}
It is easy to see that $\str B$ is a $\lvert Q\rvert$-partite tournament with parts given by the first coordinate. To see that $\str B$ is a semigeneric tournament, fix two parts $(A_i, h)\prec (A_{i'}, h')$ and inside each one fix two vertices $((A_i, h), v_1, \chi_1),\allowbreak ((A_i, h), v_2, \chi_2)$ resp. $((A_{i'}, h'), v_1', \chi_1'), ((A_{i'}, h'), v_2', \chi_2')$.

Note that changing $h'(v_1)$ from 0 to 1 or vice-versa changes the direction of two or four edges on these four vertices (based on whether $v_1=v_2$), so in particular it preserves parities and thus we can assume that $h'(v_1)=0$. The same argument gives us that, without loss of generality, $h'(v_1) = h'(v_2)=h(v_1')=h(v_2')=0$.

Note also that changing $\chi_1((A_{i'}, h'))$ changes the directions of precisely two edges and therefore we can assume that
$$\chi_1((A_{i'}, h'))=\chi_2((A_{i'}, h'))=\chi_1'((A_{i}, h))=\chi_2'((A_{i}, h))=0.$$
But then all four edges go in one direction and thus the parity condition is satisfied and $\str B$ is indeed a semigeneric tournament.
\end{proof}

Next we construct an embedding $\psi\colon \str A\to \str B$. For every $i\in P$, fix a representative $y^i\in A_i$. For every $p\in P$ and every $x\in A_p$, put $\psi(x) = ((A_p, h_0), x, \chi_x)$, where $\chi_x\colon Q\to \mathbb Z_2$ satisfies
$$
\chi_x((A_i, h))=
\begin{cases}
  0 & \text{if }i = p,\\
  0 & \text{if }i > p \text{ and there is an edge from $x$ to $y^i$},\\
  1 & \text{if }i > p \text{ and there is an edge from $y^i$ to $x$},\\
  0 & \text{if }i < p \text{ and the orientations of edges $y^ix$ and  $y^iy^{p}$ agree},\\
  1 & \text{if }i < p \text{ and the orientations of edges $y^ix$ and  $y^iy^{p}$ disagree}.
\end{cases}
$$
\begin{lemma}
$\psi$ is an embedding $\str A\to \str B$.
\end{lemma}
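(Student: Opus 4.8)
The plan is to verify that $\psi$ preserves and reflects both the partition into parts and the orientation of edges. First I would check the easy direction: if $x,y\in A_p$ lie in the same part of $\str A$, then $\psi(x)$ and $\psi(y)$ both have first coordinate $(A_p,h_0)$, so they lie in the same part of $\str B$ and are non-adjacent; conversely, if $x\in A_p$ and $y\in A_q$ with $p\neq q$, then $\psi(x),\psi(y)$ have distinct first coordinates $(A_p,h_0)\neq(A_q,h_0)$, hence are adjacent. So it remains only to check that for $x\in A_p$, $y\in A_q$ with $p\neq q$, the edge $\psi(x)\psi(y)$ in $\str B$ is oriented the same way as $xy$ in $\str A$.

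For the orientation, write $\str x=\psi(x)=((A_p,h_0),x,\chi_x)$ and $\str y=\psi(y)=((A_q,h_0),y,\chi_y)$. Since both part valuation functions are $h_0$, the quantity $V(\str x,\str y)$ collapses to $\chi_x((A_q,h_0))+\chi_y((A_p,h_0))$. By the definition of the ordering $\prec$, we have $(A_p,h_0)\prec(A_q,h_0)$ precisely when $p<q$, so I would split into the case $p<q$ and the case $p>q$ and, in each, use the definition of $\chi_x$ and $\chi_y$ together with the case $q>p$ resp. $q<p$ of that definition. The main point is a small case analysis: when $p<q$, $\chi_x((A_q,h_0))$ records (via cases $i>p$) whether the edge $xy^{q}$ points toward or away from $x$, while $\chi_y((A_p,h_0))$ records (via cases $i<q$) whether $y^{p}y$ and $y^{p}y^{q}$ agree; one then invokes Observation~\ref{obs:semig_equiv_weak} twice to translate statements about $y^{p},y^{q}$ into statements about $x,y$ and reads off that $V(\str x,\str y)=0$ iff the edge $xy$ goes from $x$ to $y$. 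The case $p>q$ is symmetric with the roles swapped and the extra parity flip built into clause~(2) of the edge definition.

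Finally, once the edge relation is matched on the nose, $\psi$ is clearly injective (the second coordinate already distinguishes vertices within a part, and distinct parts give distinct first coordinates), so $\psi$ is an embedding.

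The step I expect to be the main obstacle is keeping the bookkeeping in the orientation case analysis straight: there are genuinely several sub-cases coming from the asymmetry between ``$i>p$'' and ``$i<p$'' in the definition of $\chi_x$, from the asymmetry between $\prec$-smaller and $\prec$-larger in the edge orientation rule of $\str B$, and from the fact that $\psi(x)$ and $\psi(y)$ see each other's parts through opposite halves of their valuation functions. The semigenericity of $\str A$, in the form of Observation~\ref{obs:semig_equiv_weak}, is exactly what makes the reference vertices $y^{p},y^{q}$ faithful proxies for $x,y$, so the proof is really a careful accounting exercise rather than anything conceptually deep; I would organize it as a short table of the four sign contributions in each of the two main cases.
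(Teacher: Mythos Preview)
Your proposal is correct and follows essentially the same approach as the paper: reduce $V(\psi(x),\psi(y))$ to $\chi_x((A_q,h_0))+\chi_y((A_p,h_0))$ using $h_0\equiv 0$, then match this against the edge rule via a case split on $p<q$ versus $p>q$, with the semigeneric property of $\str A$ doing the real work. The only difference is organizational: the paper first observes that in a semigeneric tournament three of the four edges among $\{u,y^i\}\times\{v,y^p\}$ determine the fourth, and uses this to reduce without loss of generality to $v=y^p$, which makes one of the two $\chi$-values trivially zero and shortens the remaining case analysis; you instead keep $x,y$ general and invoke Observation~\ref{obs:semig_equiv_weak} directly to pass from the proxies $y^p,y^q$ back to $x,y$. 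Both routes are equally valid and rest on the same idea.
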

\begin{proof}
Fix arbitrary $u,v\in A$. If they are in the same part then clearly there is no edge between $u$ and $v$ in $\str A$ and there is no edge between $\psi(u)$ and $\psi(v)$ in $\str B$. So we can assume that $u\in A_i$, $v\in A_p$,

Note that, in any semigeneric tournament, if we have two vertices from one part and two vertices from some other part, the directions of any three edges between them determine the direction of the fourth edge. Consequently, if $\psi$ preserves the directions of edges $y^iy^p$, $y^iv$ and $uy^p$ then it also preserves the direction of edge $uv$. Thus, we can without loss of generality assume that $v = y^p$.

First assume that $i < p$. From the construction we have that $\chi_v((A_i,h_0)) = 0$. We also know that $\chi_u((A_p,h_0)) = 0$ if and only if there is an edge from $u$ to $v=y^p$. Consequently, $V(\psi(u),\psi(v)) = \chi_u((A_p,h_0))$ and the definition of the edges of $\str B$ implies that $\psi$ preserves the direction of the edge $uv$.

So $i > p$. In this case $V(\psi(u),\psi(v)) = 0$ if and only if exactly one of the following holds: Either there is an edge from $y^i$ to $y^p$ and the orientations of edges $y^iy^p$ and $uy^p$ disagree, or there is an edge from $y^p$ to $y^i$ and the orientations of edges $y^iy^p$ and $uy^p$ agree. Both of these are equivalent to the existence of an edge from $y^p$ to $u$, hence $\psi$ again preserves the direction of the edge $uv$, and so $\psi$ is indeed an embedding $\str A\to \str B$.
\end{proof}
Put $\str A' = \psi(\str A)$. This will be the copy of $\str A$ in $\str B$ whose automorphisms we will extend.

\subsection{Automorphisms of $\str B$}
We will now define three families of automorphisms of $\str B$, denoted by $\theta_\pi$, $\theta_{a,b}$, and $\theta_{a,v}$ respectively, which we will later use to extend partial automorphisms of $\str A'$. However, before we do it, we state an easy observation which will be helpful in proving that what we defined are indeed automorphisms.

Given $\omega$-partite tournaments $\str C$ and $\str D$ and an injection $f\colon C\to D$, we say that $f$ is \emph{part-preserving} if for every $x,y\in C$, $x$ and $y$ are connected by an edge of $\str C$ if and only if $f(x)$ and $f(y)$ are connected by an edge of $\str D$. Given a part-preserving bijection $f\colon \str B\to \str B$, we denote by $f_Q$ the action of $f$ on $Q$ given by projecting to the first coordinate (note that it is well-defined since $f$ is part-preserving), and we say, for $\str x = ((A_i, h), x, \chi), \str y = ((A_j, g), y, \xi)\in B$, that $f$ \emph{flips the parts of $\str x$ and $\str y$} if $(A_i,h) \neq (A_j,g)$ and $f_Q$ is decreasing (in the $\preceq$ order) when restricted to $\{(A_i,h), (A_j,g)\}$.

\begin{observation}\label{obs:aut_V}
Let $C\subseteq B$ and let $f\colon C\to B$ be a part-preserving injection. Then $f$ is a partial automorphism of $\str B$ if and only if for every $\str x, \str y\in C$ from different parts we have that $f$ flips the parts of $\str x$ and $\str y$ if and only if $V(\str x,\str y) + V(f(\str x),f(\str y)) = 1$. \qed
\end{observation}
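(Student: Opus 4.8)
The plan is to reduce everything to a short case analysis on the orientation rule for the edges of $\str B$. Since $f$ is part-preserving, it automatically sends edges to edges and non-edges to non-edges (two vertices of $\str B$ fail to be adjacent exactly when their first coordinates coincide), so $f$ is a partial automorphism of $\str B$ if and only if it preserves the orientation of every edge spanned by two vertices of $C$ lying in different parts. Hence it is enough to fix such a pair $\str x,\str y\in C$ and show that $f$ preserves the orientation of the edge $\str x\str y$ if and only if the displayed biconditional (``$f$ flips the parts of $\str x$ and $\str y$'' $\iff$ ``$V(\str x,\str y)+V(f(\str x),f(\str y))=1$'') holds; quantifying over all such pairs then yields the statement. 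Before doing this I would record that both the predicate ``$f$ flips the parts of $\str x$ and $\str y$'' and the quantity $V(\str x,\str y)$ are symmetric under swapping $\str x$ and $\str y$, so without loss of generality I may assume that the part of $\str x$ precedes the part of $\str y$ in the order $\prec$.

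With that normalisation, the definition of the edges of $\str B$ says precisely that the edge $\str x\str y$ is oriented from $\str x$ to $\str y$ if and only if $V(\str x,\str y)=0$. The argument then splits into two cases. If $f$ does not flip the parts of $\str x$ and $\str y$, then the part of $f(\str x)$ still precedes that of $f(\str y)$, so the edge $f(\str x)f(\str y)$ is oriented from $f(\str x)$ to $f(\str y)$ if and only if $V(f(\str x),f(\str y))=0$; therefore $f$ preserves this edge's orientation if and only if $V(\str x,\str y)=V(f(\str x),f(\str y))$, i.e.\ if and only if $V(\str x,\str y)+V(f(\str x),f(\str y))=0$, which is exactly the right-hand side of the biconditional in the situation where its left-hand side is false. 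If instead $f$ does flip the parts, then the part of $f(\str x)$ succeeds that of $f(\str y)$, so by the second clause of the orientation rule the edge is oriented from $f(\str x)$ to $f(\str y)$ if and only if $V(f(\str x),f(\str y))=1$; hence $f$ preserves the orientation if and only if $V(\str x,\str y)\neq V(f(\str x),f(\str y))$, i.e.\ if and only if $V(\str x,\str y)+V(f(\str x),f(\str y))=1$, which is exactly the right-hand side of the biconditional when its left-hand side is true. In both cases ``$f$ preserves the orientation of $\str x\str y$'' is equivalent to the displayed biconditional, which is what we wanted.

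There is essentially no obstacle here: this is the routine ``truth-table'' verification that the orientation rule behaves well under part-preserving maps. The only points requiring a little care are the symmetry reduction (so that one genuinely only has to treat the case where the part of $\str x$ precedes the part of $\str y$) and keeping track of the fact that the two clauses defining the orientation use opposite conventions ($V=0$ versus $V=1$), which is exactly what produces the parity $V(\str x,\str y)+V(f(\str x),f(\str y))$ appearing in the statement.
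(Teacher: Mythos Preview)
Your proof is correct and is precisely the routine verification the paper leaves implicit (the observation is stated with an immediate \qed{} and no argument). You have simply filled in the straightforward case analysis on whether $f_Q$ is increasing or decreasing on the two parts, which is exactly what the authors intend the reader to supply.
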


Let $\pi\colon A\to A$ be a part-preserving bijection. Let $\iota\colon P\to P$ be the permutation of $P$ induced by $\pi$. Given $h\colon A\to \mathbb Z_2$, denote by $h^\pi\colon A\to \mathbb Z_2$ the part valuation function satisfying $h^\pi(y) = h(\pi^{-1}(y))$.

Define $\theta_\pi\colon B\to B$ such that $\theta_\pi(((A_i, h), x, \chi)) = ((A_{\iota(i)}, h^\pi), \pi(x), \chi')$ where $\chi'\colon Q\to\mathbb Z_2$ is the vertex valuation function satisfying
$$\chi'((A_{\iota(p)}, g^\pi)) = \begin{cases}
1+\chi((A_p, g)) & \text{ if } (A_i, h) \prec (A_p, g) \text{ and } (A_{\iota(i)}, h^\pi) \succ  (A_{\iota(p)}, g^\pi)\\
\chi((A_p, g)) & \text{ otherwise}.
\end{cases}$$

\begin{lemma}
For every part-preserving bijection $\pi\colon A\to A$ it holds that $\theta_\pi$ is an automorphism of $\str B$.
\end{lemma}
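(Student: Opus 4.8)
The strategy is to apply Observation~\ref{obs:aut_V}: I will first check that $\theta_\pi$ is a well-defined bijection which is part-preserving, and then verify the numerical flip/$V$-parity condition from the observation. For bijectivity, the map on the first two coordinates, $(A_i,h,x)\mapsto(A_{\iota(i)},h^\pi,\pi(x))$, is clearly a bijection of $Q\times A$ restricted appropriately (its inverse uses $\pi^{-1}$ and $\iota^{-1}$), and it is part-preserving since two vertices of $\str B$ lie in the same part iff they share the first coordinate $(A_i,h)$, and $(A_i,h)\mapsto(A_{\iota(i)},h^\pi)$ is a bijection of $Q$. For the third coordinate, given the image's first two coordinates, the correction rule $\chi\mapsto\chi'$ is an involution-like invertible transformation of the group $\mathbb Z_2^Q$ (adding a fixed function depending only on the ``source'' part $(A_i,h)$), so $\theta_\pi$ is injective, hence bijective as $B$ is finite. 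This also identifies, for the present $\pi$, that $(\theta_\pi)_Q$ is the map $(A_i,h)\mapsto(A_{\iota(i)},h^\pi)$.

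\textbf{The main computation.} The heart is verifying the condition of Observation~\ref{obs:aut_V}: for $\str x=((A_i,h),x,\chi)$ and $\str y=((A_p,g),y,\xi)$ in different parts, $\theta_\pi$ flips the parts of $\str x$ and $\str y$ iff $V(\str x,\str y)+V(\theta_\pi(\str x),\theta_\pi(\str y))=1$. Writing out $V(\theta_\pi(\str x),\theta_\pi(\str y)) = \chi'((A_{\iota(p)},g^\pi)) + h^\pi(\pi(y)) + \xi'((A_{\iota(i)},h^\pi)) + g^\pi(\pi(x))$, I note that $h^\pi(\pi(y)) = h(y)$ and $g^\pi(\pi(x)) = g(x)$ by definition of $h^\pi,g^\pi$, so those two terms cancel against the corresponding terms of $V(\str x,\str y)$. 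It remains to analyze $\chi'((A_{\iota(p)},g^\pi)) + \chi((A_p,g))$ together with the symmetric pair $\xi'((A_{\iota(i)},h^\pi)) + \xi((A_i,h))$. By the defining case split for $\chi'$: the first sum is $1$ exactly when $(A_i,h)\prec(A_p,g)$ while $(A_{\iota(i)},h^\pi)\succ(A_{\iota(p)},g^\pi)$, and is $0$ otherwise; the second sum is $1$ exactly when $(A_p,g)\prec(A_i,h)$ while $(A_{\iota(p)},g^\pi)\succ(A_{\iota(i)},h^\pi)$, and is $0$ otherwise. Since $\str x,\str y$ are in different parts, exactly one of $(A_i,h)\prec(A_p,g)$ and $(A_p,g)\prec(A_i,h)$ holds; in either case, exactly one of the two sums can possibly be $1$, and it equals $1$ precisely when $(\theta_\pi)_Q$ reverses the $\prec$-order of the two parts, i.e.\ precisely when $\theta_\pi$ flips the parts of $\str x$ and $\str y$. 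Hence $V(\str x,\str y)+V(\theta_\pi(\str x),\theta_\pi(\str y)) = \chi'((A_{\iota(p)},g^\pi)) + \chi((A_p,g)) + \xi'((A_{\iota(i)},h^\pi)) + \xi((A_i,h))$ is $1$ iff $\theta_\pi$ flips the parts, which is exactly what Observation~\ref{obs:aut_V} requires.

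\textbf{Expected obstacle.} The only delicate point is bookkeeping the $\prec$-order under the map $(A_i,h)\mapsto(A_{\iota(i)},h^\pi)$: this map need not preserve $\prec$ (that is the whole point of the correction term), so one must be careful that the two case-splits defining $\chi'$ and $\xi'$ are genuinely symmetric and that in the ``different parts'' situation at most one of them fires. Once it is observed that ``$(A_i,h)\prec(A_p,g)$ and $(A_{\iota(i)},h^\pi)\succ(A_{\iota(p)},g^\pi)$'' is exactly the statement ``$\preceq$ is reversed on this ordered pair'', the verification is a routine $\mathbb Z_2$-arithmetic check. I do not anticipate any genuine difficulty beyond this.
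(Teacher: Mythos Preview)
Your proposal is correct and follows essentially the same approach as the paper: both establish that $\theta_\pi$ is a part-preserving bijection and then invoke Observation~\ref{obs:aut_V}, reducing the claim to showing that $V(\str x,\str y)+V(\theta_\pi(\str x),\theta_\pi(\str y))=1$ iff $\theta_\pi$ flips the parts of $\str x$ and $\str y$. The paper compresses the computation into a single sentence (``from the construction of $\chi'$ it follows that \ldots''), whereas you spell out explicitly that the $h^\pi(\pi(y))$ and $g^\pi(\pi(x))$ terms cancel and that exactly one of the two $\chi'/\xi'$ correction terms fires, precisely when the $\prec$-order is reversed; this is the intended argument.
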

\begin{proof}
Using a similar argument as in Section~\ref{sec:npart} and noting that the map $h\mapsto h^\pi$ is a bijection, it is easy to see that $\theta_\pi$ is a part-preserving bijection. Observe that from the construction of $\chi'$ it follows that $V(\str x,\str y)=V(\theta_\pi(\str x),\theta_\pi(\str y))$ if and only if $\theta_\pi$ does not flip the parts of $\str x$ and $\str y$. Observation~\ref{obs:aut_V} then implies that $\theta_\pi$ is an automorphism.
\end{proof}

Given $a < b\in P$ we define $\theta_{a,b}\colon B\to B$ such that $\theta_{a,b}(((A_i, h), x, \chi)) = ((A_i, h), x, \chi')$ where $\chi'\colon Q\to \mathbb Z_2$ is the vertex valuation function satisfying
$$\chi'((A_{p}, g)) = \begin{cases}
1+\chi((A_p, g)) & \text{ if } \{a,b\} = \{i,p\}\\
\chi((A_p, g)) & \text{ otherwise}.
\end{cases}$$

\begin{lemma}
For every pair $a<b \in P$, $\theta_{a,b}$ is an automorphism of $\str B$.
\end{lemma}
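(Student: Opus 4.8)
The statement to be proved is that $\theta_{a,b}$ is an automorphism of $\str B$ for every pair $a < b \in P$. Following the pattern established for $\theta_\pi$ and for the analogous maps in Section~\ref{sec:npart}, the plan is to verify the hypotheses of Observation~\ref{obs:aut_V}: first that $\theta_{a,b}$ is a part-preserving bijection, and second the edge-direction condition, which here simplifies because $\theta_{a,b}$ fixes every first coordinate and hence never flips parts.

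First I would check that $\theta_{a,b}$ is a bijection. Since the map sends $((A_i,h),x,\chi)$ to $((A_i,h),x,\chi')$ where $\chi'$ differs from $\chi$ only in the single coordinate indexed by the part $(A_p,g)$ for which $\{a,b\} = \{i,p\}$ (if such a $p$ exists), the map $\chi \mapsto \chi'$ is its own inverse; indeed $\theta_{a,b}\theta_{a,b} = \id$, so $\theta_{a,b}$ is an involution and in particular a bijection. It is part-preserving because it does not change the first coordinate at all, so $x$ and $y$ lie in the same part of $\str B$ if and only if $\theta_{a,b}(x)$ and $\theta_{a,b}(y)$ do, and moreover $(\theta_{a,b})_Q = \id_Q$.

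Next I would verify the edge condition via Observation~\ref{obs:aut_V}. Since $(\theta_{a,b})_Q = \id_Q$, the map $\theta_{a,b}$ never flips the parts of any two vertices, so by Observation~\ref{obs:aut_V} it suffices to show $V(\str x, \str y) = V(\theta_{a,b}(\str x), \theta_{a,b}(\str y))$ for all $\str x = ((A_i,h),x,\chi)$, $\str y = ((A_p,g),y,\xi)$ from different parts. Writing out $V(\theta_{a,b}(\str x),\theta_{a,b}(\str y)) = \chi'((A_p,g)) + h(y) + \xi'((A_i,h)) + g(x)$, the terms $h(y)$ and $g(x)$ are unchanged. If $\{a,b\} \neq \{i,p\}$ then $\chi'((A_p,g)) = \chi((A_p,g))$ and $\xi'((A_i,h)) = \xi((A_i,h))$, so $V$ is unchanged. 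If $\{a,b\} = \{i,p\}$ then $\chi'((A_p,g)) = 1 + \chi((A_p,g))$ and $\xi'((A_i,h)) = 1 + \xi((A_i,h))$, and the two added $1$'s cancel in $\mathbb Z_2$, so again $V(\str x,\str y) = V(\theta_{a,b}(\str x),\theta_{a,b}(\str y))$. Observation~\ref{obs:aut_V} then gives that $\theta_{a,b}$ is an automorphism.

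I do not expect a genuine obstacle here: this is the direct semigeneric analogue of the graph-case map $\theta_{u,v}$ from Section~\ref{sec:graphs} and the $n$-partite map $\theta_{u,v}$ from Section~\ref{sec:npart}, and the only mild subtlety is bookkeeping the two $\mathbb Z_2$-summands in $V$ and confirming they cancel; the fact that $\theta_{a,b}$ acts trivially on $Q$ removes any need to reason about part-flipping. The main conceptual point — that the domain of vertex valuation functions is $Q$ rather than $A$, so a single $\theta_{a,b}$ simultaneously toggles the (unique) relevant coordinate in both $\chi$ and $\xi$ — is exactly what makes the parity cancellation work.
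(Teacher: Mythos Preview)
Your proposal is correct and follows essentially the same approach as the paper: show $\theta_{a,b}$ is an involution (hence a bijection), note it fixes the first coordinate so is part-preserving and never flips parts, then invoke Observation~\ref{obs:aut_V} and verify $V$ is preserved by the two-case analysis on whether $\{a,b\}=\{i,p\}$. One cosmetic slip: when $i\in\{a,b\}$ the map $\chi\mapsto\chi'$ changes not a single coordinate but all coordinates $(A_p,g)$ with $p$ the other element of $\{a,b\}$ and $g$ arbitrary; this does not affect your argument, since the involution property and the $V$-computation are unaffected.
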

\begin{proof}
First note that $\theta_{a,b} = \theta_{a,b}^{-1}$, and so it is a bijection. Note that $\theta_{a,b}$ sends every part to itself, and so it clearly is part-preserving and does not flip the parts of any vertices of $\str B$. Consequently, by Observation~\ref{obs:aut_V} we only need to prove that for every $\str x,\str y\in B$ we have that $V(\str x,\str y)=V(\theta_{a,b}(\str x),\theta_{a,b}(\str y))$. This follows easily from the construction of $\chi'$ by distinguishing the cases $\{a,b\} = \{i,p\}$ and $\{a,b\} \neq \{i,p\}$.
\end{proof}

So far we have been essentially copying the construction for $n$-partite tournaments. However, in order to address what we have seen in Example~\ref{ex:semig_bad} we need to introduce one extra family of automorphisms which will (unlike $\theta_\pi$'s and $\theta_{a,b}$'s) take advantage of having part valuation functions.

Given $v\in A$ and a part valuation function $h\colon A\to \mathbb Z_2$, let $h^v\colon A\to \mathbb Z_2$ be the part valuation function satisfying
$$h^v(x) = \begin{cases}
1+h(x) & \text{ if } x=v\\
h(x) & \text{ otherwise}.
\end{cases}$$

For every $a\in P$, and $v\in A\setminus A_a$ define $\theta_{a,v}\colon B\to B$ such that $\theta_{a,v}(((A_i, h), x, \chi)) = ((A_i, h'), x, \chi')$ where $h' = h^v$ if $i=a$ and $h'=h$ otherwise, and $\chi'\colon Q\to \mathbb Z_2$ is the vertex valuation function satisfying
$$\chi'((A_{p}, g)) = \begin{cases}
1+\chi((A_p, g^v)) & \text{ if }(a=p=i\land h\triangleleft g^v\land h^v\triangleright g)\\
1+\chi((A_p, g^v)) & \text{ if } \{a,v\} = \{p,x\}\text{ and }a\neq i\\
\chi((A_p, g^v)) & \text{ if } a=p \text{ and }a\neq i\text{ and }x\neq v\\
\chi((A_p, g)) & \text{ otherwise}.
\end{cases}$$
While the conditions may look complicated, they are actually straightforward: In particular, note that we use $\chi((A_p, g^v))$ in the definition of $\chi'$ if and only if $p=a$, and since $g = (g^v)^v$, this is in fact the same situation as when, in the construction of $\theta_\pi$ we define $\chi'((A_{\iota(p)}, g^\pi))$ instead of $\chi'((A_p, g))$ --- as $\theta_{a,v}$ moves some parts, it also needs to act on the vertex valuation functions accordingly.

When moreover $x=v$ then we need to account for us having $g^v(x) = 1+g(x)$ --- while $\theta_{a,b}$ flipped the mutual vertex valuation function values of vertices in both parts, $\theta_{a,v}$ flips the part valuation function value of vertices from part $a$ and the corresponding vertex valuation function value of vertices with $v$ in the second coordinate. Finally, in the case when $a=p=i$ we need to account for $\theta_{a,v}$ flipping the parts of some vertices (similarly as we do for $\theta_\pi$).

\begin{lemma}
For every $a\in P$ and $v\in A\setminus A_a$, the function $\theta_{a,v}$ is an automorphism of $\str B$.
\end{lemma}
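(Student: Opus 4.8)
The plan is to verify that $\theta_{a,v}$ is an automorphism of $\str B$ using Observation~\ref{obs:aut_V}, exactly as for the previous two families. First I would check that $\theta_{a,v}$ is a part-preserving bijection: the map $h\mapsto h^v$ on part valuation functions is an involution (hence a bijection), so the first-coordinate action of $\theta_{a,v}$ on $Q$ is a bijection which moves part $(A_a,h)$ to $(A_a,h^v)$ and fixes all parts $(A_i,h)$ with $i\neq a$. Since the second coordinate is untouched and, for each target part, the action on $\chi$ is easily inverted (it is again essentially an involution, composing the swap $g\leftrightarrow g^v$ on the domain with a sign flip on the relevant entries), $\theta_{a,v}$ is a bijection $B\to B$, and it preserves parts, hence non-edges.

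Next I would identify precisely which pairs of parts $\theta_{a,v}$ flips in the $\preceq$ order. The only parts that move are those of the form $(A_a,g)$, and $(A_a,g)\mapsto(A_a,g^v)$; since $g$ and $g^v$ differ only at $v$, and $\triangleleft$ compares by the $\leq$-least vertex of disagreement, the order between $(A_a,g)$ and a part $(A_p,h)$ with $p\neq a$ is never affected (the index $p\neq a$ decides), and the order between two moved parts $(A_a,g)$ and $(A_a,g')$ is also unchanged unless $v$ is the least point at which $g,g'$ disagree. Concretely, the only reorderings happen between $(A_a,g)$ with $g(v)=0$ (so $g\triangleleft g^v$) and itself-image, i.e. a part gets flipped past exactly those partners with which $v$ is the deciding coordinate; this is exactly the situation encoded by the first line $(a=p=i\land h\triangleleft g^v\land h^v\triangleright g)$ in the definition of $\chi'$. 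I would make this bookkeeping explicit and conclude: $\theta_{a,v}$ flips the parts of $\str x=((A_i,h),x,\chi)$ and $\str y=((A_p,g),y,\xi)$ precisely when one of them lies in part $a$, the other does not, and the $v$-coordinate is the deciding one in the $\triangleleft$-comparison after applying $h\mapsto h^v$.

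Then, by Observation~\ref{obs:aut_V}, it remains to show that for all $\str x,\str y\in B$ from different parts, $V(\str x,\str y)+V(\theta_{a,v}(\str x),\theta_{a,v}(\str y))=1$ if and only if $\theta_{a,v}$ flips their parts. Writing $\str x=((A_i,h),x,\chi)$, $\str y=((A_p,g),y,\xi)$ and recalling
$$V(\str x,\str y)=\chi((A_p,g))+h(y)+\xi((A_i,h))+g(x),$$
I would compute $V(\theta_{a,v}(\str x),\theta_{a,v}(\str y))$ term by term according to which of $i,p$ equals $a$, and whether $x=v$ or $y=v$. The design of $\theta_{a,v}$ makes all the spurious changes cancel: when $p=a$ the replacement $\chi((A_p,g))\rightsquigarrow\chi((A_p,g^v))$ in the new $\str y$-part is compensated by the change in the $h(y)$ term being read against $h^v$ precisely when $y=v$; the case $\{a,v\}=\{p,x\}$ with $a\neq i$ introduces a $+1$ that cancels the change $g(x)\rightsquigarrow g^v(x)=1+g(x)$; and the sign flips from the middle two lines pair up the same way with the $\xi$ and part-valuation terms on the other side. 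The only surviving net contribution is the $+1$ coming from the first line of the definition of $\chi'$, which is present exactly in the part-flipping situation identified above. A short case analysis (cases $a\notin\{i,p\}$; $a=i\neq p$ split by $y=v$ or not; $a=p\neq i$ split by $x=v$ or not; and $a=i=p$, which is impossible since $\str x,\str y$ are in different parts but one could still have $i=p$ with different part valuations — here the first line does the work) then yields the claimed equivalence.

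The main obstacle is the last step: keeping the $\mathbb Z_2$-bookkeeping straight across the roughly four interacting modifications ($h\mapsto h^v$ on part valuations, the swap $g\leftrightarrow g^v$ on vertex-valuation arguments, the sign flips on the three special lines, and the $\triangleleft$-reordering of parts) and checking that in every case exactly the flips contribute an odd change to $V$. This is entirely routine but error-prone; the guiding principle, which I would state at the outset to organize the computation, is that $\theta_{a,v}$ is built to mimic a $\theta_\pi$-type move on the parts of index $a$ (acting on $Q$ via $g\mapsto g^v$) composed with a $\theta_{a,b}$-type flip that synchronizes the part valuation value at $v$ with the corresponding vertex valuation entries, so each block of the definition of $\chi'$ cancels against exactly one of the four summands of $V$.
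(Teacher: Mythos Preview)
Your approach is essentially the paper's: check that $\theta_{a,v}$ is a part-preserving bijection, then invoke Observation~\ref{obs:aut_V} and compute $V(\str x,\str y)+V(\theta_{a,v}(\str x),\theta_{a,v}(\str y))$ term by term, splitting into cases according to which of $i,p$ equals $a$ and whether $x=v$ or $y=v$. The paper's case split is exactly your final list: $a\notin\{i,p\}$; $a=i\neq p$; $a=p\neq i$; and $i=p=a$ (with $h\neq g$).

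There is, however, one concrete slip you should fix. After correctly observing that the order between $(A_a,g)$ and any $(A_p,h)$ with $p\neq a$ is unaffected (the index decides), you then conclude that ``$\theta_{a,v}$ flips the parts of $\str x$ and $\str y$ precisely when one of them lies in part $a$, the other does not''. This is the opposite of what you just argued, and it is false: flips occur \emph{only} in the case $i=p=a$ (with $h\neq g$), namely when the involution $f\mapsto f^v$ reverses the $\triangleleft$-order of $h$ and $g$. In that case exactly one of the two ``first line'' conditions (for $\chi'$ at $(A_a,g^v)$, or for $\xi'$ at $(A_a,h^v)$) fires, producing the required $+1$; the $h(y)$ and $g(x)$ terms are untouched since $x,y\in A_a$ and $v\notin A_a$. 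In all other cases (at least one of $i,p$ differs from $a$) there is no flip, and your pairing argument that the $h\mapsto h^v$ change in a part-valuation term is exactly cancelled by the corresponding $+1$ in the second line of the $\chi'$/$\xi'$ definition goes through. Your final case list already treats $i=p=a$ correctly, so the error is a local misstatement rather than a structural gap; just replace the offending sentence with the correct characterization.
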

\begin{proof}
Once again, one can easily verify that $\theta_{a,v}$ is a bijection which preserves parts, and hence non-edges, of $\str B$. We will now use Observation~\ref{obs:aut_V} to show that $\theta_{a,v}$ is an automorphism. Towards that, let $\str x = ((A_i, h), x, \chi)$ and $\str y = ((A_p, g), y, \xi)$ be vertices from different parts of $\str B$. Put $\theta_{a,v}(\str x) = ((A_i, h'), x, \chi')$ and $\theta_{a,v}(\str y) = ((A_p, g'), y, \xi')$, and put $V = V(\str x,\str y) + V(\theta_{a,v}(\str x),\theta_{a,v}(\str y))$. Then
\begin{align*}
V &= \chi((A_p, g)) + h(y) + \xi((A_i, h)) + g(x)\\
  &+ \chi'((A_p, g')) + h'(y) + \xi'((A_i, h')) + g'(x).
\end{align*}

We first deal with the case $i=p=a$. In this case $h'=h^v$, $g'=g^v$, but $h(y) = h^v(y)$ and $g(x) = g^v(x)$ (as $x,y\in A_a$ and $v\notin A_a$), and thus 
$$V = \chi((A_i, g)) + \xi((A_i, h))  + \chi'((A_i, g^v))  + \xi'((A_i, h^v)).$$
Notice that $\theta_{a,v}$ flips the parts of $\str x$ and $\str y$ if and only if the map $f\mapsto f^v$ is decreasing (with respect to $\triangleleft$) on $\{h,g\}$. If $f\mapsto f^v$ is increasing on $\{h,g\}$ then $\chi((A_i, g))=\chi'((A_i, g^v))$ and $\xi((A_i, h))=\xi'((A_i, h^v))$, so $V=0$. If $f\mapsto f^v$ is decreasing on $\{h,g\}$ then exactly one of the equalities fails (based on the $\triangleleft$-order of $g$ and $h$), and so $V = 1$.

So either $i\neq a$ or $p\neq a$. In this case $\theta_{a,v}$ does not flip the parts of $\str x$ and $\str y$, and so we need to prove that $V=0$. We now have four cases depending on which subset of the equations $\{a,v\} = \{i,y\}$ and $\{a,v\} = \{p,x\}$ holds. If none of them hold then $h'(y) = h(y)$, $g'(x)=g(x)$, $\chi'((A_p, g'))=\chi((A_p, g))$, and $\xi'((A_i, h'))=\xi((A_i, h))$, hence $V=0$. If one of them holds then exactly two of the four equalities fail, hence still $V=0$. Both of them actually cannot hold at the same time, but if they did then all four equalities would fail and we would have $V=0$ nonetheless.
\end{proof}

\begin{observation}\label{obs:semig:com1}
Given $u\in A$, $a,b,c,d\in P$, and a part-preserving bijection $\pi\colon A\to A$ then the following equalities hold whenever their constituents are all defined:
\begin{enumerate}
\item $\theta_{a,b}\theta_{c,d} = \theta_{c,d}\theta_{a,b}$
\item $\theta_{a,b}\theta_{c,u} = \theta_{c,u}\theta_{a,b}$\qed
\end{enumerate}
\end{observation}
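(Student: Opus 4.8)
Both identities are established by comparing the two composition orders coordinate by coordinate. Observe first that $\theta_{a,b}$ and $\theta_{c,d}$ leave the first two coordinates $(A_i,h)$ and $x$ of any vertex untouched, acting only on the vertex valuation function; and that $\theta_{c,u}$ alters the part valuation function (to $h^u$ when $i=c$, otherwise leaving it fixed) and the vertex valuation function, but its action on the part valuation coordinate is independent of the vertex valuation coordinate. Consequently, in either composition the resulting first two coordinates of $\theta_{a,b}\theta_{c,u}(\str x)$ and $\theta_{c,u}\theta_{a,b}(\str x)$ agree automatically, and likewise for $\theta_{a,b}\theta_{c,d}$. It therefore suffices to check that the two orders produce the same vertex valuation function.

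For identity (1), note that each of $\theta_{a,b}$ and $\theta_{c,d}$ acts on a vertex valuation function $\chi$ attached to a vertex in part $A_i$ by a ``diagonal'' involution: it adds $1$ (in $\mathbb Z_2$) to the value $\chi((A_p,g))$ precisely when $\{i,p\}=\{a,b\}$ (respectively $\{i,p\}=\{c,d\}$), a condition on $p$ alone. Hence, starting from $\str x=((A_i,h),x,\chi)$, both $\theta_{a,b}\theta_{c,d}(\str x)$ and $\theta_{c,d}\theta_{a,b}(\str x)$ have third coordinate equal to $\chi$ with the quantity ``$[\{i,p\}=\{a,b\}]+[\{i,p\}=\{c,d\}]$'' added to the value at each $(A_p,g)$; since addition in $\mathbb Z_2$ is commutative, the order is irrelevant. (If $\{a,b\}=\{c,d\}$ the two maps coincide and the statement is trivial.)

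For identity (2), write $\theta_{c,u}(((A_i,h),x,\chi))=((A_i,h'),x,\chi_1)$, where $\chi_1((A_p,g))$ equals $\chi((A_p,\bar g))$, with $\bar g=g^u$ if $p=c$ and $\bar g=g$ otherwise, plus a correction term in $\{0,1\}$ that is determined by $i,p,x,h,g$ (and which branch of the defining case split applies) but never by the valuation function itself. The crucial observation is that the $\theta_{a,b}$-correction to a vertex valuation function --- adding $1$ at $(A_p,g)$ exactly when $\{i,p\}=\{a,b\}$ --- depends only on the part index $p$, and so is unaffected by the reindexing $g\mapsto \bar g$ that $\theta_{c,u}$ performs. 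Spelling this out over the four defining branches of $\theta_{c,u}$ (``otherwise''; $c=p=i\wedge h\triangleleft g^u\wedge h^u\triangleright g$; $\{c,u\}=\{p,x\}\wedge c\neq i$; $c=p\wedge c\neq i\wedge x\neq u$) and using that the branch selection is the same in both composition orders (it is a function of $i,p,x,h,g,c,u$ only, and $\theta_{a,b}$ changes none of these), one finds in each branch that $\theta_{a,b}\theta_{c,u}(\str x)$ and $\theta_{c,u}\theta_{a,b}(\str x)$ apply identical $\mathbb Z_2$-corrections to identical entries of $\chi$; in the branch with $p=i$ one uses $a\neq b$ to note that $\{i,p\}=\{i\}\neq\{a,b\}$, so no $\theta_{a,b}$-flip occurs there on either side. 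This yields $\theta_{a,b}\theta_{c,u}=\theta_{c,u}\theta_{a,b}$.

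The only mildly delicate point --- the ``main obstacle'' --- is the bookkeeping in the four-branch case analysis for (2); but it collapses immediately once one isolates the observation that the $\theta_{a,b}$-correction is a function of the part index $p$ alone and hence commutes with the substitution $g\mapsto g^u$ that $\theta_{c,u}$ introduces.
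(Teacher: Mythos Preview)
Your argument is correct; the paper gives no proof at all (the observation is marked \qed\ immediately after the statement), so your coordinate-by-coordinate verification is exactly the routine check the authors leave to the reader. The key points you isolate---that $\theta_{a,b}$ fixes the first two coordinates and that its correction $[\{i,p\}=\{a,b\}]$ depends only on the part index $p$ (hence commutes with the reindexing $g\mapsto g^u$ and vanishes when $p=i$)---are precisely what makes the observation immediate.
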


It is true that for every $u,v\in A$ and every $a,b\in P$ we have that $\theta_{a,u}\theta_{b,v} = \theta_{b,v}\theta_{a,u}$ whenever $\theta_{a,u}$ and $\theta_{b,v}$ are defined. Proving it takes, however, some work, and we are not going to need the full strength of this. Instead, we are only going to need the following easy observation:
\begin{observation}\label{obs:semig:com2}
Given $a,b\in P$, $u\in A\setminus A_a$, $v\in A\setminus A_b$,
and $\str x = ((A_i, h), x, \chi) \in B$, if $i\notin \{a,b\}$ then
$\theta_{a,u}\circ \theta_{b,v}(\str x) = \theta_{b,v}\circ \theta_{a,u}(\str x)$.\qed
\end{observation}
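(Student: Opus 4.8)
The plan is to reduce the claimed identity to a computation on the third (vertex-valuation-function) coordinate of $\str x$. Since $i\notin\{a,b\}$, applying $\theta_{a,u}$ to a vertex lying in part $A_i$ changes neither its first coordinate (the part valuation function of such a vertex is modified only when the vertex lies in $A_a$) nor its second coordinate, and the same holds for $\theta_{b,v}$. Consequently $\theta_{a,u}\theta_{b,v}(\str x)$ and $\theta_{b,v}\theta_{a,u}(\str x)$ both have the form $((A_i,h),x,\cdot)$, and it suffices to check that the two vertex valuation functions produced agree.

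For this I would first record the restricted form of the maps. Unwinding the definition of $\theta_{a,v}$ on an input vertex $((A_j,h),y,\chi)$ with $j\neq a$ (so that the clause guarded by $\triangleleft$ cannot fire), one gets $\theta_{a,v}(((A_j,h),y,\chi))=((A_j,h),y,\chi\circ\sigma_{a,v}+c_{a,v,y})$, where addition is pointwise in $\mathbb Z_2$, the map $\sigma_{a,v}\colon Q\to Q$ is the involution fixing $(A_p,g)$ for $p\neq a$ and sending $(A_a,g)$ to $(A_a,g^v)$, and $c_{a,v,y}\colon Q\to\mathbb Z_2$ equals $1$ at $(A_a,g)$ when $y=v$ and is $0$ everywhere else. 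Two features matter: $\sigma_{a,v}$ preserves the first coordinate of elements of $Q$, and $c_{a,v,y}$ depends on the input vertex only through its second coordinate $y$.

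Then I would invoke the elementary fact (a one-line computation) that affine maps $\chi\mapsto\chi\circ\sigma+c$ and $\chi\mapsto\chi\circ\tau+d$ on vertex valuation functions commute as soon as $\sigma\circ\tau=\tau\circ\sigma$ and $d\circ\sigma+c=c\circ\tau+d$. Applying the previous paragraph twice --- which is legitimate since $i\notin\{a,b\}$ and the second coordinate of $\str x$ remains equal to $x$ along the way --- it remains to check these two conditions for $\sigma=\sigma_{a,u}$, $\tau=\sigma_{b,v}$, $c=c_{a,u,x}$, $d=c_{b,v,x}$. If $a\neq b$, the involutions $\sigma_{a,u}$ and $\sigma_{b,v}$ move disjoint subsets of $Q$ (those with first coordinate $A_a$, resp.\ $A_b$) and hence commute; if $a=b$, then $\sigma_{a,u}\circ\sigma_{a,v}$ sends $(A_a,g)$ to $(A_a,(g^v)^u)$, which is independent of the order since flipping a part valuation function at $u$ and at $v$ commutes. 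For the additive condition, $\sigma_{a,u}$ preserves first coordinates of $Q$-elements while $c_{b,v,x}$ depends on a $Q$-element only through its first coordinate, so $c_{b,v,x}\circ\sigma_{a,u}=c_{b,v,x}$ and, symmetrically, $c_{a,u,x}\circ\sigma_{b,v}=c_{a,u,x}$; thus both $d\circ\sigma+c$ and $c\circ\tau+d$ equal $c_{a,u,x}+c_{b,v,x}$. This yields $\theta_{a,u}\theta_{b,v}(\str x)=\theta_{b,v}\theta_{a,u}(\str x)$.

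I do not anticipate a genuine obstacle here --- the argument is pure bookkeeping. The one spot needing care is the reduction in the second paragraph: one must confirm that under $i\notin\{a,b\}$ the four-case definition of $\theta_{a,v}$ really collapses to the affine form above (in particular that the clause guarded by $\triangleleft$ never applies, which is precisely what the hypothesis $i\notin\{a,b\}$ guarantees and also why the general commutation statement mentioned before the observation is not available so cheaply), and then keep track of the fact that the additive term only reads the second coordinate, which none of the maps involved alters.
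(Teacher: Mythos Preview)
The paper gives no proof of this observation --- it is stated with a bare \qed, explicitly flagged as the ``easy'' special case of the full commutation identity. Your argument is correct and is precisely the routine verification the paper leaves to the reader; the reduction to the affine form $\chi\mapsto\chi\circ\sigma+c$ is a clean way to organise the bookkeeping and makes transparent why the hypothesis $i\notin\{a,b\}$ is exactly what is needed to suppress the $\triangleleft$-guarded clause.
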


\subsection{Extending partial automorphisms}
We now show that $\str B$ extends all partial automorphisms of $\str A'$. We will proceed similarly to Section~\ref{sec:npart}. However, Example~\ref{ex:semig_bad} shows that we are going to have to do more work. Notice that it was crucial in Example~\ref{ex:semig_bad} that no vertex from $A_2$ was in the domain of the partial automorphism: As soon as at least one vertex from some part is in the domain of the partial automorphism then Observation~\ref{obs:semig_equiv_weak} ensures that the partial automorphism ``is tame'' on this part (respects the equivalences given by all the other parts used by the partial automorphisms, cf. Observation~\ref{obs:semig_equivalences_pair}).

Hence, similarly to Section~\ref{sec:npart}, we will start by approximating the partial automorphism $\varphi$ by some $\theta_{\hat\varphi}$ which will already agree with the partial automorphism on all coordinates except for the vertex valuation functions. We will subsequently use the $\theta_{a,b}$ and $\theta_{a,v}$ automorphisms: We will use the $\theta_{a,b}$ automorphisms to fix those coordinates of vertex valuation functions which are in the domain of the action of $\varphi$ on the parts of $\str A$, and the $\theta_{a,v}$ automorphisms to fix the those coordinates of vertex valuation functions which are not in the domain of the action of $\varphi$ on the parts of $\str A$ (and hence can behave like in Example~\ref{ex:semig_bad}).

Fix a partial automorphism $\varphi\colon \str A'\to \str A'$. Looking at the second coordinates (or, equivalently, looking at $\psi^{-1}\varphi\psi$), it induces a partial permutation of $A$ which is part-preserving. Extend it to a part-preserving bijection $\hat\varphi$. Let $\iota$ be the partial permutation of $P$ induced by $\varphi$ and let $\hat\iota$ be the permutation of $P$ induced by $\hat\varphi$. Put $\mathcal P = \dom(\iota)$.

\begin{observation}\label{obs:semig:proj_fine}
For every $\str x = ((A_i, h_0), x, \chi_x) \in \dom(\varphi)$ we have that if $\varphi(\str x) = ((A_j, h_0), y, \chi_y)$ then $\theta_{\hat\varphi}(\str x) = ((A_j, h_0), y, \chi)$ for some $\chi\colon Q\to \mathbb Z_2$.
\end{observation}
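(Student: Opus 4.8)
The plan is to simply unwind the definition of $\theta_{\hat\varphi}$ and use two facts: that the constant-zero part valuation function $h_0$ is fixed by the operation $h\mapsto h^{\hat\varphi}$, and that $\hat\varphi$ was chosen to extend the partial permutation of $A$ induced by $\varphi$ on the second coordinate. This is completely parallel to Observations~\ref{obs:graph:proj_fine} and~\ref{obs:npart:proj_fine}.

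First I would recall that $\str A' = \psi(\str A)$, so $\str x\in\dom(\varphi)\subseteq\str A'$ means $\str x = \psi(x)$ for some $x\in A_i$; by the definition of $\psi$ the third coordinate is then indeed $\chi_x$ and the first is $(A_i,h_0)$. Likewise $\varphi(\str x)\in\str A'$, so reading off the coordinates of $\varphi(\str x) = ((A_j,h_0),y,\chi_y)$ shows $\varphi(\str x) = \psi(y)$ with $y\in A_j$. Since $\hat\varphi$ extends $\psi^{-1}\varphi\psi$ by construction, we get $\hat\varphi(x) = y$; and since $\hat\varphi$ is part-preserving with induced permutation $\hat\iota$ on $P$ and sends $x\in A_i$ to $y\in A_j$, it follows that $\hat\iota(i) = j$.

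Next, applying the definition of $\theta_\pi$ with $\pi = \hat\varphi$ gives $\theta_{\hat\varphi}(((A_i,h_0),x,\chi_x)) = ((A_{\hat\iota(i)},h_0^{\hat\varphi}),\hat\varphi(x),\chi)$ for a suitable vertex valuation function $\chi\colon Q\to\mathbb Z_2$. Since $h_0$ is the constant-$0$ function, $h_0^{\hat\varphi}(z) = h_0(\hat\varphi^{-1}(z)) = 0$ for every $z\in A$, i.e.\ $h_0^{\hat\varphi} = h_0$. Substituting $\hat\iota(i) = j$ and $\hat\varphi(x) = y$ from the previous step yields $\theta_{\hat\varphi}(\str x) = ((A_j,h_0),y,\chi)$, which is exactly the claim.

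There is essentially no obstacle here: the statement is a bookkeeping consequence of the definitions of $\psi$, $\hat\varphi$ and $\theta_{\hat\varphi}$. The only point worth isolating is the invariance $h_0^{\hat\varphi} = h_0$, which is precisely what guarantees that the part valuation functions of vertices of $\str A'$ remain trivial after applying $\theta_{\hat\varphi}$, so that the subsequent correction has to adjust only vertex valuation functions (via the $\theta_{a,b}$'s for pairs inside $\mathcal P$) and, for parts outside $\mathcal P$, also part valuation functions (via the $\theta_{a,v}$'s).
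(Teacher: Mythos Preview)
Your proposal is correct and follows exactly the paper's approach: the paper's one-line proof reads ``This follows from the definition of $\theta_{\hat\varphi}$ and the fact that $h_0^{\hat\varphi} = h_0$,'' and you have simply unwound this by making explicit that $\hat\varphi(x)=y$, $\hat\iota(i)=j$, and $h_0^{\hat\varphi}=h_0$.
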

\begin{proof}
This follows from the definition of $\theta_{\hat\varphi}$ and the fact that $h_0^{\hat\varphi} = h_0$.
\end{proof}

\begin{definition}
Given $\str x = ((A_p, h_0), x, \chi_x) \in \dom(\varphi)$, denote $\varphi(\str x) = ((A_{\iota(p)}, \allowbreak h_0), u, \chi_u)$ and $\theta_{\hat\varphi}(\str x) = ((A_{\iota(p)}, h_0), u, \chi)$ (cf. Observation~\ref{obs:semig:proj_fine}). We say that $\str x$ \emph{has projection $p$}, and we say that $\str x$ \emph{flips valuation of $a\in P$} if $\chi((A_{\iota(a)}, h_0)) \neq \chi_u((A_{\iota(a)}, h_0))$.
\end{definition}

\begin{prop}\label{prop:equiv}
The following are equivalent for $a, b \in \mathcal P$:
\begin{enumerate}
  \item\label{lem:equiv:1} There is $\str x\in \dom(\varphi)$ which has projection $a$ and flips valuation of $b$,
  \item\label{lem:equiv:3} every $\str x\in \dom(\varphi)$ which has projection $a$ flips valuation of $b$,
  \item\label{lem:equiv:2} there is $\str y\in\dom(\varphi)$ which has projection $b$ and flips valuation of $a$, and
  \item\label{lem:equiv:4} every $\str y\in \dom(\varphi)$ which has projection $b$ flips valuation of $a$.
\end{enumerate}
\end{prop}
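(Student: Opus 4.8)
The plan is to prove the four statements are equivalent by a cyclic chain of implications, using the semigenericity parity condition (Observation~\ref{obs:semig_equiv_weak}) as the core tool, together with the fact that $\theta_{\hat\varphi}$ is an automorphism of $\str B$ and $\varphi$ is a partial automorphism of $\str A'$. By the obvious symmetry between the roles of $a$ and $b$, it suffices to prove $(\ref{lem:equiv:1})\Rightarrow(\ref{lem:equiv:4})$ and $(\ref{lem:equiv:4})\Rightarrow(\ref{lem:equiv:1})$, since then $(\ref{lem:equiv:2})\Rightarrow(\ref{lem:equiv:3})$ and $(\ref{lem:equiv:3})\Rightarrow(\ref{lem:equiv:2})$ follow by swapping $a$ and $b$, and the remaining implications $(\ref{lem:equiv:4})\Rightarrow(\ref{lem:equiv:3})$, $(\ref{lem:equiv:3})\Rightarrow(\ref{lem:equiv:1})$ are trivial (``every'' implies ``some'' once we know there is at least one such $\str x$, which holds because $a\in\mathcal P=\dom(\iota)$). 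Actually the cleanest route is: $(\ref{lem:equiv:1})\Rightarrow(\ref{lem:equiv:2})$ and then observe the statement is symmetric in $a,b$, plus a separate ``spreading'' lemma that within a fixed projection the flipping behaviour is uniform, which gives the upgrade from ``some'' to ``every''.

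The key computational step is to unwind what ``$\str x$ flips valuation of $b$'' means in terms of edge orientations in $\str A$. Let $\str x=((A_a,h_0),x,\chi_x)\in\dom(\varphi)$ with $\varphi(\str x)=((A_{\iota(a)},h_0),u,\chi_u)$. Since $\str B$ encodes edge directions via the function $V$, and since $\psi$ is an embedding, the value $\chi_x((A_b,h_0))$ together with the order relation $\prec$ records whether there is an edge from $x$ to (the image of) a representative $y^b$ — more precisely it records the orientation of the edge between $\psi(x)$ and $\psi(y^b)$. Similarly $\chi_u((A_{\iota(b)},h_0))$ records the orientation of the edge between $\varphi(\psi(x))$ and $\psi(y^{\iota(b)})$ and $\chi((A_{\iota(b)},h_0))$ records the orientation of the edge between $\theta_{\hat\varphi}(\psi(x))$ and $\psi(y^{\iota(b)})$. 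Because $\theta_{\hat\varphi}$ is a genuine automorphism while $\varphi$ is only partial, the difference $\chi((A_{\iota(b)},h_0))\neq\chi_u((A_{\iota(b)},h_0))$ is exactly the obstruction measuring whether the orientation of the edge $x\,y^b$ in $\str A$ is ``consistent'' with the orientation that $\varphi$ would force on $u\,y^{\iota(b)}$; in other words, $\str x$ flips valuation of $b$ iff the edge between $u$ and the $\hat\varphi$-image of $y^b$ disagrees with the edge between $u$ and $y^{\iota(b)}=$ the representative. One then checks this condition is visibly symmetric under exchanging the roles of $a$ and $b$ once one also uses that $\varphi$ preserves the orientation of the edge between $\str x$ and any $\str x'\in\dom(\varphi)$ of projection $b$ — this preservation is the analogue of Observation~\ref{obs:Fokay}/Observation~\ref{obs:npart:proj_fine} from the $n$-partite case.

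For the ``spreading'' part — upgrading ``there is $\str x$ of projection $a$ flipping $b$'' to ``every $\str x$ of projection $a$ flips $b$'' — the point is Observation~\ref{obs:semig_equiv_weak}: if $\str x_1,\str x_2\in\dom(\varphi)$ both have projection $a$, then $x_1,x_2$ lie in the same part $A_a$ of $\str A$, their images $u_1,u_2$ lie in $A_{\iota(a)}$, and by semigenericity the pattern of edges from $\{x_1,x_2\}$ to any two-element subset of $A_b$ is parity-constrained; combined with the fact that $\varphi$ preserves all edges it sees, the ``flips valuation of $b$'' status of $\str x_1$ and $\str x_2$ must coincide. I expect this spreading argument to be the main obstacle: one has to be careful that the relevant representatives $y^b$ and $y^{\iota(b)}$ are actually comparable through the partial automorphism, i.e. one may need to pass through vertices of $\dom(\varphi)$ of projection $b$ (which exist as $b\in\mathcal P$) to transfer information, and to handle the bookkeeping of the $\prec$-order and the part valuation functions $h_0$ correctly — but since everything happens at part valuation $h_0$ by Observation~\ref{obs:semig:proj_fine}, the $\triangleleft$-comparisons all collapse and only the $\leq$-order on $P$ and the edge orientations in $\str A$ matter, which keeps the calculation manageable.
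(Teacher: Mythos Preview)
Your overall plan --- reduce to proving (\ref{lem:equiv:1})$\Leftrightarrow$(\ref{lem:equiv:2}) by symmetry in $a,b$ plus a separate ``spreading'' argument for (\ref{lem:equiv:1})$\Leftrightarrow$(\ref{lem:equiv:3}) --- is exactly the paper's decomposition, and the core observation (compare the effect of $\varphi$ with that of $\theta_{\hat\varphi}$, both of which preserve edge data) is also the same.

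Where you diverge is in the execution. You propose to unwind ``$\str x$ flips valuation of $b$'' into a statement about edge orientations in $\str A$ involving the representatives $y^b$, $y^{\iota(b)}$, $\hat\varphi(y^b)$, and then invoke semigenericity of $\str A$ (Observation~\ref{obs:semig_equiv_weak}). The paper does \emph{not} do this: it never returns to $\str A$ or to the representatives $y^i$ at all. Instead it works entirely with the function $V$ inside $\str B$ via Observation~\ref{obs:aut_V}. For $\str x$ of projection $a$ and $\str z$ of projection $b$, both $\varphi$ and $\theta_{\hat\varphi}$ satisfy $V(\str x,\str z)+V(f(\str x),f(\str z))=r$ with the \emph{same} $r$ (since $r$ depends only on whether $\iota$ reverses $\{a,b\}$); adding these two equations in $\mathbb Z_2$ yields
\[
V(\varphi(\str x),\varphi(\str z)) + V(\theta_{\hat\varphi}(\str x),\theta_{\hat\varphi}(\str z)) = 0,
\]
and since all part valuations involved equal $h_0$, expanding $V$ collapses this to exactly ``$\str x$ flips $b$ $\iff$ $\str z$ flips $a$''. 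The spreading step is the same trick applied to a second $\str y$ of projection $a$, then adding the two resulting equations.

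Your route would likely work, but the worry you flag yourself is real: the sentence ``$\str x$ flips valuation of $b$ iff the edge between $u$ and $\hat\varphi(y^b)$ disagrees with the edge between $u$ and $y^{\iota(b)}$'' is not correct as written --- the definition of $\chi_x((A_b,h_0))$ already involves a case split on the order of $a,b$ and possibly the auxiliary edge $y^by^a$, and $\theta_{\hat\varphi}$ contributes an additional $\prec$-flip, while $\hat\varphi(y^b)$ does not actually appear anywhere in these formulas. Making this precise would force you through the bookkeeping you anticipate. The paper's $V$-calculus sidesteps all of it in three lines.
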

\begin{proof}
Note that the equivalences (\ref{lem:equiv:1}) $\iff$ (\ref{lem:equiv:3}) and (\ref{lem:equiv:2}) $\iff$ (\ref{lem:equiv:4}) are actually the same claim, just renaming $a$ and $b$. It is thus enough to prove (\ref{lem:equiv:1}) $\iff$ (\ref{lem:equiv:3}) and (\ref{lem:equiv:1}) $\iff$ (\ref{lem:equiv:2}).

As $a, b \in \mathcal P$, we have $\str x,\str z \in \dom(\varphi)$ with
\begin{align*}
  \str x &= ((A_a, h_0), x, \chi_x),\\
  \str z &= ((A_b, h_0), z, \chi_z),\\
  \varphi(\str x) &= ((A_{\iota(a)}, h_0), u, \chi_u),\\
  \varphi(\str z) &= ((A_{\iota(b)}, h_0), w, \chi_w),\\
  \theta_{\hat\varphi}(\str x) &= ((A_{\iota(a)}, h_0), u, \chi),\\
  \theta_{\hat\varphi}(\str z) &= ((A_{\iota(b)}, h_0), w, \zeta)
\end{align*}
for some $\chi,\zeta\colon Q\to \mathbb Z_2$.

Put $r=0$ if $\iota$ is increasing on $\{a,b\}$ and $r=1$ otherwise.
Since $\varphi$ is a partial automorphism of $\str B$, we know by Observation~\ref{obs:aut_V} that 
$$V(\str x,\str z) + V(\varphi(\str x),\varphi(\str z)) = r.$$
Similarly, since $\theta_{\hat\varphi}$ is an automorphism of $\str B$, we have 
$$V(\str x,\str z) + V(\theta_{\hat\varphi}(\str x),\theta_{\hat\varphi}(\str z)) = r.$$
Summing these two equations (recall that we work in $\mathbb Z_2$), we get
\begin{equation}\label{eqn1}\tag{$\star$}
V(\varphi(\str x),\varphi(\str z)) + V(\theta_{\hat\varphi}(\str x),\theta_{\hat\varphi}(\str z)) = 0.
\end{equation}
If we expand the expressions and use the fact that $h_0$ is constant zero, we obtain
$$\chi_u((A_{\iota(b)}, h_0)) + \chi_w((A_{\iota(a)}, h_0)) + \chi((A_{\iota(b)}, h_0)) + \zeta((A_{\iota(a)}, h_0)) = 0,$$
Which we can rearrange to
$$\chi_u((A_{\iota(b)}, h_0)) + \chi((A_{\iota(b)}, h_0)) = \chi_w((A_{\iota(a)}, h_0)) + \zeta((A_{\iota(a)}, h_0)),$$
or in other words,
$$\chi_u((A_{\iota(b)}, h_0)) = \chi((A_{\iota(b)}, h_0)) \iff \chi_w((A_{\iota(a)}, h_0)) = \zeta((A_{\iota(a)}, h_0)).$$
This just says that $\str x$ flips valuation of $b$ if and only if $\str z$ flips valuation of $a$. Assuming that (\ref{lem:equiv:1}) $\iff$ (\ref{lem:equiv:3}) and (\ref{lem:equiv:2}) $\iff$ (\ref{lem:equiv:4}) are true, this actually proves (\ref{lem:equiv:1}) $\iff$ (\ref{lem:equiv:2}).

It thus remains to prove (\ref{lem:equiv:1}) $\iff$ (\ref{lem:equiv:3}). If $\str x$ is the only vertex from $\dom(\varphi)$ with projection $a$ then it is trivial, so assume that there is $\str y\neq \str x\in \dom(\varphi)$ with
\begin{align*}
  \str y &= ((A_a, h_0), y, \chi_y),\\
  \varphi(\str y) &= ((A_{\iota(a)}, h_0), v, \chi_v),\\
  \theta_{\hat\varphi}(\str y) &= ((A_{\iota(a)}, h_0), v, \xi)
\end{align*}
for some $\xi\colon Q\to \mathbb Z_2$. We will prove that $\str x$ flips the valuation of $b$ if and only if $\str y$ flips the valuation of $b$, from which (\ref{lem:equiv:1}) $\iff$ (\ref{lem:equiv:3}) follows. Note that this is equivalent to the following equality:
$$\chi((A_{\iota(b)}, h_0)) + \chi_u((A_{\iota(b)}, h_0)) + \xi((A_{\iota(b)}, h_0)) + \chi_v((A_{\iota(b)}, h_0)) = 0.$$

By the exact same argument using which we obtained~\eqref{eqn1}, but for $\str y$ and $\str z$, we obtain
$$V(\varphi(\str y),\varphi(\str z)) + V(\theta_{\hat\varphi}(\str y),\theta_{\hat\varphi}(\str z)) = 0,$$
and when we sum this with~\eqref{eqn1} and use the fact that $h_0$ is constant zero, we get
$$\chi_u((A_{\iota(b)}, h_0)) + \chi_v((A_{\iota(b)}, h_0)) + \chi((A_{\iota(b)}, h_0)) + \xi((A_{\iota(b)}, h_0)) = 0,$$
which proves (\ref{lem:equiv:1}) $\iff$ (\ref{lem:equiv:3}).
\end{proof}

Next, we define a set $F$ consisting of pairs $\{a, b\} \subseteq \mathcal P$ such that there is $\str x\in \dom(\varphi)$ which has projection $a$ and flips valuation of $b$. By Proposition~\ref{prop:equiv} this does not depend on the choice of $\str x$ and whether we consider $\{a,b\}$ or $\{b,a\}$ in the definition.

Finally, we define a set $G$ consisting of pairs $(a,v)$ such that $a\in P\setminus \mathcal P$, $v\in A$, $\psi(v) \in \dom(\varphi)$, and $\psi(v)$  flips valuation of $a$.

Let $\theta_F$ be the composition of all $\theta_{\hat\iota(a),\hat\iota(b)}$'s for $\{a, b\}\in F$ with $\hat\iota(a) < \hat\iota(b)$ (by Observation~\ref{obs:semig:com1} $\theta_F$ does not depend on the order of compositions), and let $\theta_G$ be the composition of all $\theta_{\hat\iota(a),\hat\varphi(v)}$'s for $(a,v)\in G$ in an arbitrary order. Put $\theta = \theta_F\theta_G\theta_{\hat\varphi}$.

\begin{lemma}\label{lem:semig:flipsok}
Let $\str v = ((A_a, h_0), v, \chi_v) \in \dom(\varphi)$, and let $b\in P$. The following hold:
\begin{enumerate}
  \item If $b = a$ then $\{a,b\}\notin F$, $(b,v)\notin G$ and $\str v$ does not flip valuation of $b$.
  \item If $b\in \mathcal P$ then $(b,v) \notin G$ and $\{a,b\} \in F$ if and only if $\str v$ flips valuation of $b$.
  \item If $b\notin \mathcal P$ then $\{a,b\}\notin F$ and $(b,v) \in G$ if and only if $\str v$ flips valuation of $b$.
\end{enumerate}
\end{lemma}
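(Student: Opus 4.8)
The plan is to prove each of the three items essentially by unwinding the definitions of $F$, $G$, and of ``flips valuation of''. Throughout, fix $\str v = ((A_a, h_0), v, \chi_v) \in \dom(\varphi)$, so $\str v$ has projection $a$, and note that since $\str v \in \dom(\varphi)$ we have $a \in \mathcal P$.

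For item (1), suppose $b = a$. Then $\{a,b\} = \{a,a\}$ is not a genuine pair, so $\{a,b\} \notin F$; likewise $(b,v) = (a,v)$ cannot be in $G$ since membership in $G$ requires $b = a \in P \setminus \mathcal P$, contradicting $a \in \mathcal P$. That $\str v$ does not flip valuation of $a$ is the content of Observation~\ref{obs:semig:proj_fine}: writing $\varphi(\str v) = ((A_{\iota(a)}, h_0), u, \chi_u)$ and $\theta_{\hat\varphi}(\str v) = ((A_{\iota(a)}, h_0), u, \chi)$, we must check $\chi((A_{\iota(a)}, h_0)) = \chi_u((A_{\iota(a)}, h_0))$. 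This follows from the definition of $\psi$ (the first clause, $\chi_x((A_i,h)) = 0$ when $i = p$): both $\chi_u$ and $\chi$ are vertex valuation functions of vertices living in part $(A_{\iota(a)}, h_0)$ obtained by applying a part-preserving map and the $\psi$-construction, so their values on their own part's index are both $0$. (One must be slightly careful: $\chi_u$ is the valuation of $\psi$-image of a vertex, so $\chi_u((A_{\iota(a)},h_0)) = 0$ by the $\psi$ formula; and $\chi((A_{\iota(a)},h_0)) = \chi_v((A_a,h_0))$ by the definition of $\theta_{\hat\varphi}$, which is again $0$ by the same clause of $\psi$. Hence they agree.)

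For item (2), suppose $b \in \mathcal P$. Then $(b,v) \notin G$ immediately since $G$ only contains pairs $(a',v')$ with $a' \in P \setminus \mathcal P$. For the equivalence $\{a,b\} \in F \iff \str v$ flips valuation of $b$: by definition $\{a,b\} \in F$ means there exists some $\str x \in \dom(\varphi)$ with projection $a$ (or with projection $b$, by symmetry of the pair) that flips valuation of $b$ (resp.\ $a$). By Proposition~\ref{prop:equiv}, all four of these conditions are equivalent, and in particular ``there exists $\str x$ with projection $a$ flipping valuation of $b$'' is equivalent to ``every $\str x$ with projection $a$ flips valuation of $b$'', hence equivalent to the specific statement ``$\str v$ flips valuation of $b$'' (as $\str v$ has projection $a$). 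For item (3), suppose $b \notin \mathcal P$. Then $\{a,b\} \notin F$ since $F$ only contains pairs inside $\mathcal P$ and $b \notin \mathcal P$. For the equivalence $(b,v) \in G \iff \str v$ flips valuation of $b$: by definition, $(b,v) \in G$ means $b \in P \setminus \mathcal P$ (true by hypothesis), $\psi(v) \in \dom(\varphi)$, and $\psi(v)$ flips valuation of $b$. Here one must observe that $\str v = \psi(v)$: indeed $\str v \in \dom(\varphi) \subseteq \str A' = \psi(\str A)$ and the second coordinate of $\str v$ is $v$, so by the definition of $\psi$ we have $\str v = \psi(v)$. Hence $\psi(v) \in \dom(\varphi)$ holds and ``$\psi(v)$ flips valuation of $b$'' is literally ``$\str v$ flips valuation of $b$'', giving the equivalence.

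The main obstacle in writing this out carefully is item (1)'s claim that $\str v$ does not flip valuation of $a$: it requires correctly tracking how $\theta_{\hat\varphi}$ acts on the vertex valuation function coordinate (via the clause defining $\chi'((A_{\iota(p)}, g^\pi))$ in the construction of $\theta_\pi$) and confirming that evaluating the resulting function at the index of its own part always yields $0$, both because $h_0^{\hat\varphi} = h_0$ and because the $\psi$-formula sets $\chi_x((A_p,h_0)) = 0$ whenever the part index equals the vertex's part. Items (2) and (3) are then essentially bookkeeping once Proposition~\ref{prop:equiv} and the identity $\str v = \psi(v)$ are in hand.
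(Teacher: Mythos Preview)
Your proposal is correct and follows essentially the same approach as the paper: unwinding the definitions of $F$, $G$, and ``flips valuation of'', together with Proposition~\ref{prop:equiv} for item~(2) and the identification $\str v = \psi(v)$ for item~(3). Your treatment of item~(1) is in fact more explicit than the paper's: the paper just says ``by the definition of $\theta_{\hat\varphi}$ and the fact that $h_0^{\hat\varphi} = h_0$'', whereas you actually compute both $\chi((A_{\iota(a)},h_0))$ and $\chi_u((A_{\iota(a)},h_0))$ to be $0$ via the $\psi$-formula, which is the honest content of that claim. One small remark: your justification that $\{a,a\}\notin F$ because it is ``not a genuine pair'' leans on reading ``pair'' as two-element subset; if one is worried about that, the fact you prove anyway (that no $\str x$ with projection $a$ flips valuation of $a$) already rules $\{a\}$ out of $F$ by the definition of $F$, so nothing is lost.
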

\begin{proof}
If $b=a$, then $\str v$ does not flip the valuation of $b$ by the definition of $\theta_{\hat\varphi}$ and the fact that $h_0^{\hat\varphi} = h_0$. Consequently, $\{a,b\}\notin F$ and $(b,v)\notin G$ by Proposition~\ref{prop:equiv}.

If $b\in \mathcal P$ then clearly $(b,v) \notin G$. If $\str v$ flips valuation of $b$ then, by definition of $F$, $\{a,b\} \in F$. On the other hand, if $\{a,b\} \in F$, then, by Proposition~\ref{prop:equiv}, $\str v$ flips valuation of $b$.

If $b\notin \mathcal P$ then clearly $\{a,b\}\notin F$. If $\str v = \psi(v)$ flips valuation of $b$ then, by definition of $G$, $(b,v) \in G$. On the other hand, if $(b,v) \in G$ then, by definition, $\str v$ flips valuation of $b$. 
\end{proof}

\begin{observation}\label{obs:h_h0}
For every $\str x = ((A_a, h_0), x, \chi_x) \in \dom(\varphi)$, if we denote $\theta(\str x) = ((A_b,h'),y,\chi)$ then for every $(A_b,g)\in Q$ it holds that $\chi((A_b,g)) = \chi((A_b,h_0))$.
\end{observation}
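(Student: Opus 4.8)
The plan is to follow the vertex valuation coordinate along the composition $\theta = \theta_F\theta_G\theta_{\hat\varphi}$ and argue that it stays ``blind'' to the part valuation function on those parts whose first index equals $b=\hat\iota(a)$. Here $b$ is exactly the first index of $\theta(\str x)$: by Observation~\ref{obs:semig:proj_fine} the vertex $\theta_{\hat\varphi}(\str x)$ has first coordinate $(A_{\hat\iota(a)}, h_0)$, and neither $\theta_G$ nor $\theta_F$ changes the first index of a vertex (they only ever modify the part valuation function and the vertex valuation function). Let me call a map $\zeta\colon Q\to\mathbb Z_2$ \emph{$b$-blind} if $\zeta((A_b, g))$ is independent of $g$; the statement to prove is precisely that the vertex valuation of $\theta(\str x)$ is $b$-blind.

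First I would record the base case: by the definition of $\psi$, the value $\chi_x((A_i, h))$ depends only on $i$ (and $x$), so $\chi_x$ is $j$-blind for every $j$, and in fact $\chi_x((A_a, h)) = 0$ for all $h$. Next I would apply $\theta_{\hat\varphi}$: writing $\theta_{\hat\varphi}(\str x) = ((A_b, h_0), \hat\varphi(x), \chi')$ (using $h_0^{\hat\varphi}=h_0$), the definition of $\theta_\pi$ gives $\chi'((A_b, g^{\hat\varphi})) = \chi_x((A_a, g)) + \varepsilon$, where $\varepsilon = 1$ only if $(A_a, h_0)\prec(A_a, g)$ and $(A_b, h_0)\succ(A_b, g^{\hat\varphi})$. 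Since $h_0$ is the $\triangleleft$-minimum, $(A_b, h_0)$ is the $\preceq$-least element of $Q$ with first coordinate $A_b$, so the second condition never holds and $\varepsilon = 0$; as $g\mapsto g^{\hat\varphi}$ is a bijection and $\chi_x((A_a, \cdot))$ is constant, $\chi'$ is $b$-blind.

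Then I would check that every factor of $\theta_G$ and of $\theta_F$ preserves $b$-blindness when applied to a vertex with first index $b$ (which, as noted, stays equal to $b$ throughout). A factor of $\theta_G$ is some $\theta_{\hat\iota(c),\hat\varphi(w)}$ with $c\in P\setminus\mathcal P$, so its first parameter $\hat\iota(c)$ differs from $\hat\iota(a)=b$; reading off the definition of $\theta_{a',v'}$ with first parameter $\hat\iota(c)$, vertex index $b$ and output part index $p=b$, the first and third cases require $\hat\iota(c)=p=b$ and hence cannot occur, the fourth case leaves $\zeta((A_b, g))$ unchanged, and the second case replaces it by $1+\zeta((A_b, g^{\hat\varphi(w)}))$; so in every case that can occur a $b$-blind $\zeta$ goes to a $b$-blind map (up to a global flip by $1$). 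A factor of $\theta_F$ is some $\theta_{\hat\iota(c),\hat\iota(d)}$ with $\hat\iota(c)<\hat\iota(d)$; applied to a vertex with first index $b$ it flips $\zeta((A_p, \cdot))$ only when $\{\hat\iota(c),\hat\iota(d)\} = \{b, p\}$, which forces $p\neq b$ because $\{\hat\iota(c),\hat\iota(d)\}$ has two elements, so it leaves all values on parts with first index $b$ untouched. Chaining these steps through the whole composition $\theta$ yields the claim.

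I expect the only real difficulty to be bookkeeping: invoking at the right moments the three special features of the index $b=\hat\iota(a)$ — that the part valuation of $\str x$ is $h_0$ (which kills the flip in $\theta_{\hat\varphi}$ on parts of index $b$), that $c\notin\mathcal P$ keeps the first parameters of the $\theta_G$-factors away from $b$, and that a pair $\{\hat\iota(c),\hat\iota(d)\}$ appearing in $\theta_F$ can never be $\{b, b\}$ — and then carefully verifying that none of the somewhat intricate cases in the definition of $\theta_{a,v}$ can break $b$-blindness under these hypotheses.
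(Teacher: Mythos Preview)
Your reading of the statement is too narrow. The notation in the paper is unfortunate: the letter $b$ is introduced as the first index of $\theta(\str x)$ and then immediately reused in ``for every $(A_b,g)\in Q$''. You took this second $b$ to be the same fixed index, and so you proved only that $\chi$ is blind to the part valuation function on the \emph{single} family of parts with first index $\hat\iota(a)$. But the paper's proof treats $b$ as a fresh bound variable, and the observation is later applied with an arbitrary $c\in P$ in place of $b$ (to conclude $\chi((A_{\hat\iota(c)},h'))=\chi((A_{\hat\iota(c)},h_0))$ for every $c$). So what is actually needed --- and what the paper proves --- is that $\chi$ is $j$-blind for \emph{every} $j\in P$.

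Your argument is correct for the weaker statement, and the overall strategy (trace blindness through $\theta_{\hat\varphi}$, then $\theta_G$, then $\theta_F$) is exactly the paper's. Extending it to all $j$ is not hard, but requires slightly different justifications at two points. For $\theta_{\hat\varphi}$: for general $j=\hat\iota(p)$ with $p\neq a$, the flip $\varepsilon$ in the definition of $\chi'$ occurs iff $a<p$ and $\hat\iota(a)>\hat\iota(p)$; since $h_0$ is $\triangleleft$-minimal this condition depends only on $p$ and not on $g$, so $j$-blindness survives (this is what the paper means by ``the first case\ldots only happens due to $\hat\iota$ not being monotone''). For $\theta_G$: a factor $\theta_{\hat\iota(c),\hat\varphi(w)}$ applied to a vertex with first index $b$ can hit cases~2 or~3 when $p=\hat\iota(c)$, but the choice between them is governed by whether $y=\hat\varphi(w)$, which is independent of $g$, and in both cases the output involves $\zeta((A_{\hat\iota(c)},g^{\hat\varphi(w)}))$, so $\hat\iota(c)$-blindness is preserved; case~1 is still excluded because the vertex index remains $b\neq\hat\iota(c)$. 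The $\theta_F$ step already works verbatim for every $j$.
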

\begin{proof}
Fix such $\str x$. Note that $\chi_x$ was constructed such that for every $(A_b,g)\in Q$ we have that $\chi_x((A_b,g)) = \chi_x((A_b,h_0))$. Next, observe that this stays true after applying $\theta_{\hat\varphi}$: This follows from the fact that $h_0$ is $\triangleleft$-minimal, and so the first case in the definition of $\chi'$ for $\theta_{\hat\varphi}$ only happens due to $\hat\iota$ not being monotone (and hence it happens for all part valuation functions for the given part). Next, by Lemma~\ref{lem:semig:flipsok} we know that $G$ only consists of pairs with the first coordinate being from $P\setminus \mathcal P$, so in particular, $a$ is not one of them, and hence this property is preserved by $\theta_G$. Finally, it is easy to see that all $\theta_{a,b}$ functions also preserve this property.
\end{proof}

We are now ready to prove Theorem~\ref{thm:semigeneric}.
\begin{proof}[Proof of Theorem~\ref{thm:semigeneric}]
We only need to prove that $\theta$ is an automorphism of $\str B$ extending $\varphi$. This would imply that $\str B$ is an EPPA-witness for $\str A'$. Clearly, by taking an isomorphism, one gets an EPPA-witness for $\str A$.

The fact that $\theta$ is an automorphism is clear as it is the composition of several automorphisms of $\str B$. In order to prove that $\theta$ extends $\varphi$, pick some $\str x = ((A_a, h_0), x, \chi_x) \in \dom(\varphi)$ and put
$$\varphi(\str x) = ((A_b, h_0), y, \chi_y).$$
By Observation~\ref{obs:semig:proj_fine} we know that
$$\theta_{\hat\varphi}(\str x) = ((A_b, h_0), y, \xi)$$
for some vertex valuation function $\xi\colon Q\to \mathbb Z_2$.
The fact that the automorphisms $\theta_{a,b}$ and $\theta_{a,v}$ do not change the second coordinate and the projection implies that
$$\theta(\str x) = ((A_b, h), y, \chi)$$
for some vertex valuation function $\chi\colon Q\to \mathbb Z_2$. Note also that since $\str x\in \dom(\varphi)$, we know that $a\in \mathcal P$, and hence, by Lemma~\ref{lem:semig:flipsok}, there is no $v\in A$ such that $(a,v)\in G$. Consequently, $h=h_0$, and it remains to verify that $\chi = \chi_y$.

Pick an arbitrary $c\in P$ and a part valuation function $h'\colon A\to \mathbb Z_2$. We want to prove that $\chi((A_{\iota(c)},h')) = \chi_y((A_{\iota(c)},h'))$. From Observation~\ref{obs:h_h0} it follows that $\chi((A_{\iota(c)},h')) = \chi((A_{\iota(c)},h_0))$, and from the construction of $\chi_y$ it follows that $\chi_y((A_{\iota(c)},h'))=\chi_y((A_{\iota(c)},h_0))$. Consequently, it suffices to prove that $\chi((A_{\iota(c)},h_0)) = \chi_y((A_{\iota(c)},h_0))$.

By Lemma~\ref{lem:semig:flipsok} we know that this holds if $c=a$. So $c\neq a$.  Put
$$\theta_G\circ\theta_{\hat\varphi}(\str x) = ((A_b,h_0),y,\xi')$$
for some vertex valuation function $\xi'\colon Q\to \mathbb Z_2$. We now distinguish two cases:

First suppose that $c\in \mathcal P$. In this case there is no $v\in A$ with $(c,v)\in G$ by Lemma~\ref{lem:semig:flipsok}, and so $\xi'((A_{\iota(c)},h_0)) = \xi((A_{\iota(c)},h_0))$. Finally, by Proposition~\ref{prop:equiv} we know that $\{a,c\} \in F$ if and only if $\str x$ flips valuation of $c$, or in other words, if and only if $\xi((A_{\iota(c)},h_0))\neq \chi_y((A_{\iota(c)},h_0))$, and so, indeed, $\chi((A_{\iota(c)},h_0)) = \chi_y((A_{\iota(c)},h_0))$.

So $c\notin\mathcal P$. By Lemma~\ref{lem:semig:flipsok} we know that there is no $d\in P$ with $\{c,d\}\in F$, and hence $\chi((A_{\iota(c)},h_0)) = \xi'((A_{\iota(c)},h_0))$. Hence we need to prove that $\xi'((A_{\iota(c)},h_0)) = \chi_y((A_{\iota(c)},h_0))$. We know that $(c,x)\in G$ if and only if $\str x$ flips valuation of $c$, or in other words, if and only if $\xi((A_{\iota(c)},h_0))\neq \chi_y((A_{\iota(c)},h_0))$, and so, indeed, $\xi'((A_{\iota(c)},h_0)) = \chi_y((A_{\iota(c)},h_0))$. Consequently, $\theta$ indeed extends $\varphi$.
\end{proof}

\section{Ample generics for semigeneric tournaments}\label{sec:sem_ample}
First, we present some basic properties of semigeneric tournaments which first appeared in~\cite{Jasinski2013}.

\begin{observation}\label{obs:semig_equivalences_pair}
Let $\str A$ be a semigeneric tournament and let $X\neq Y\subseteq A$ be a pair of parts of $\str A$. Then there is an equivalence relation $\sim_{X,Y}^\str A \subseteq X^2$ with at most two classes such that for every $x_1,x_2\in X$ and $y\in Y$ we have that the directions of edges $x_1y$ and $x_2y$ are the same if and only if $x_1\sim_{X,Y}^\str A x_2$.
\end{observation}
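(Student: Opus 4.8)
The plan is to define $\sim_{X,Y}^\str A$ directly by the condition in the statement and then check it is an equivalence relation with at most two classes. Concretely, first I would handle the degenerate cases: if $Y = \emptyset$, declare $\sim_{X,Y}^\str A$ to be all of $X^2$ (the vacuous condition), which is an equivalence relation with at most one class. If $Y \neq \emptyset$, fix any $y_0 \in Y$ and define $x_1 \sim_{X,Y}^\str A x_2$ if and only if the edges $x_1 y_0$ and $x_2 y_0$ have the same orientation. This is an equivalence relation because ``having the same orientation towards a fixed vertex $y_0$'' partitions $X$ into at most two blocks, namely $\{x \in X : x \to y_0\}$ and $\{x \in X : y_0 \to x\}$ (note there are no edges within $X$, and every $x \in X$ is adjacent to $y_0$ since they lie in different parts, so exactly one of the two holds). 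Reflexivity, symmetry and transitivity are then immediate, and there are visibly at most two classes.

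The substantive point is that this definition does not depend on the choice of $y_0$, and more precisely that it satisfies the stated equivalence for \emph{every} $y \in Y$, not just for $y_0$. This is exactly where semigenericity enters, via Observation~\ref{obs:semig_equiv_weak}: given $x_1, x_2 \in X$, if the edges $x_1 y_0$ and $x_2 y_0$ have the same orientation (i.e.\ $x_1 \sim_{X,Y}^\str A x_2$), then Observation~\ref{obs:semig_equiv_weak} tells us that for every $d \in Y$ the edges $x_1 d$ and $x_2 d$ have the same orientation; conversely, if $x_1 y_0$ and $x_2 y_0$ have \emph{different} orientations, then the contrapositive of Observation~\ref{obs:semig_equiv_weak} (applied with the roles of ``same'' swapped, or equivalently: if $x_1 d$ and $x_2 d$ agreed for some $d$ then $x_1 y_0$ and $x_2 y_0$ would agree, contradiction) shows that $x_1 d$ and $x_2 d$ have different orientations for every $d \in Y$. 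Hence for all $x_1, x_2 \in X$ and all $y \in Y$, the edges $x_1 y$ and $x_2 y$ agree in direction precisely when $x_1 \sim_{X,Y}^\str A x_2$, which is the claimed property. In particular this also re-proves independence of the choice of $y_0$.

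The only mild subtlety — and the ``hardest'' step, though it is still routine — is bookkeeping the edge cases: the possibility that $X$ or $Y$ is empty, and the observation that $X$ itself could have a $\sim_{X,Y}^\str A$-class that is empty when $Y = \emptyset$ is handled by the ``at most two classes'' phrasing. Everything else is a direct unwinding of definitions plus one invocation of Observation~\ref{obs:semig_equiv_weak}. I would therefore structure the written proof as: (1) dispose of $Y = \emptyset$; (2) define $\sim_{X,Y}^\str A$ via a fixed $y_0$ and note it is an equivalence with $\le 2$ classes; (3) invoke Observation~\ref{obs:semig_equiv_weak} in both directions to upgrade the defining property from $y_0$ to all $y \in Y$.
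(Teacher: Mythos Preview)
Your proposal is correct and follows essentially the same approach as the paper: fix some $y_0\in Y$, define $x_1\sim_{X,Y}^\str A x_2$ via agreement of orientations towards $y_0$, and invoke Observation~\ref{obs:semig_equiv_weak} for the rest. Your write-up is simply more explicit about the degenerate case $Y=\emptyset$ and about why the relation has at most two classes, but the substance is identical.
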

\begin{proof}
Fix some $y_0\in Y$. Given $x_1,x_2\in X$, define $x_1\sim_{X,Y}^\str A x_2$ if and only if the directions of edges $x_1y_0$ and $x_2y_0$ are the same. The rest follows from Observation~\ref{obs:semig_equiv_weak}.
\end{proof}
In other words, the directions of edges between vertices from two fixed parts are only determined by a pair of equivalence relations of index 2 on those parts.

\begin{definition}
Let $\str A$ be a semigeneric tournament with $k$ parts $A_1,\ldots, A_k$. For every $1\leq i\leq k$, define an equivalence relation $\sim_{A_i}^\str A \subseteq A_i^2$ as follows:
$$\sim_{A_i}^\str A = \bigcap_{1\leq j\leq k, j\neq i} \sim_{A_i,A_j}^\str A,$$
and let $\sim^\str A\subseteq A^2$ be the equivalence relation such that
$$\sim^\str A = \bigcup_{1\leq i\leq k} \sim_{A_i}^\str A.$$
We say that $\str A$ is \emph{saturated} if $\sim^\str A$ has $k2^{k-1}$ equivalence classes. We say that it is \emph{twinless} if every equivalence class of $\sim^\str A$ is a singleton.
\end{definition}

\begin{lemma}\label{lem:sem_autom_sim}
Let $\str A$ be a semigeneric tournament. For every $x,y\in A$ we have that $x\sim^\str A y$ if and only if for every $w\in A$ the following holds
\begin{quote}
  $wx$ is an edge of $\str A$ if and only if $wy$ is, and if they are edges then either both are oriented from $w$, or both are oriented to $w$.
\end{quote}
Consequently, every automorphism of $\str A$ preserves the equivalence relation $\sim^\str A$.
\end{lemma}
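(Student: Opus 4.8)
The plan is to unwind the definition of $\sim^\str A$ and reduce the statement to the pairwise equivalences $\sim_{X,Y}^\str A$ of Observation~\ref{obs:semig_equivalences_pair} together with Observation~\ref{obs:semig_equiv_weak}. Throughout I may assume $x\neq y$, the case $x=y$ being trivial.

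First I would prove the forward implication. If $x\sim^\str A y$, then by definition $x$ and $y$ lie in a common part $A_i$ and $x\sim_{A_i,A_j}^\str A y$ for every $j\neq i$. Fix $w\in A$. If $w\in A_i$, then neither $wx$ nor $wy$ is an edge of $\str A$ (there are no edges inside a part), so the displayed condition holds vacuously. If $w\in A_j$ for some $j\neq i$, then both $wx$ and $wy$ are edges (distinct parts are completely joined), and $x\sim_{A_i,A_j}^\str A y$ says exactly that the edges $xw$ and $yw$ have the same orientation. This gives the displayed condition for every $w$.

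Next I would prove the reverse implication. Assume the displayed condition holds for every $w\in A$. Taking $w=y$: the pair $yx$ is an edge precisely when $x$ and $y$ lie in different parts, while $yy$ is never an edge; hence the condition forces $x$ and $y$ into a common part $A_i$. Now fix $j\neq i$. For every $z\in A_j$ the edges $zx$ and $zy$ both exist and, by the condition, have the same orientation, so $x\sim_{A_i,A_j}^\str A y$ (by Observation~\ref{obs:semig_equivalences_pair} it would suffice to check one such $z$, with Observation~\ref{obs:semig_equiv_weak} propagating it, but here the condition already gives all $z$ directly). Since this holds for all $j\neq i$, we get $x\sim_{A_i}^\str A y$ and therefore $x\sim^\str A y$.

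Finally, for the ``consequently'' clause I would observe that the equivalent condition for $x\sim^\str A y$ just proved refers only to the directed edge relation of $\str A$, which is preserved by any $g\in\Aut(\str A)$: if $x\sim^\str A y$ and $w'\in A$ is arbitrary, apply the condition to $w=g^{-1}(w')$ and use that $g$ preserves adjacency and orientation to conclude that $w'g(x)$ is an edge iff $w'g(y)$ is, with matching orientations; hence $g(x)\sim^\str A g(y)$. I do not expect a genuine obstacle here: the argument is a direct unravelling of definitions, and the only point requiring a little care is reading off $x\sim_{A_i,A_j}^\str A y$ correctly from Observation~\ref{obs:semig_equivalences_pair} (it is defined via one reference vertex of $A_j$, with independence of that choice guaranteed by Observation~\ref{obs:semig_equiv_weak}) and noticing that the same-part case of $w$ in the displayed condition is vacuous rather than an issue.
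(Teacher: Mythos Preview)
Your proposal is correct and follows essentially the same approach as the paper: both argue by handling the trivial case $x=y$, using a specific choice of $w$ (you take $w=y$, the paper takes $w=x$) to force $x$ and $y$ into a common part, and then translating the displayed condition directly into the equivalences $\sim_{A_i,A_j}^{\str A}$; the ``consequently'' clause is in both cases just the observation that the condition is first-order in the edge relation.
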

\begin{proof}
The statement is clear if $x=y$. If $x\neq y$ are not in the same part then $x\not\sim^\str A y$ and, taking $w = x$, we get that there is no edge between $x$ and $w$ but there is an edge between $y$ and $w$. Thus it remains to consider the case when $x\neq y$ are in the same part of $\str A$.

Let $A_1,\ldots,A_k$ be the parts of $\str A$ and assume without loss of generality that $x,y\in A_1$. If there is $w\in A_j$ such that the edges $xw$ and $yw$ are oriented differently then $x\not\sim_{A_1,A_j}^\str A y$ and thus $x\not\sim^\str A y$. On the other hand, if $x\not \sim^\str A y$ then there is some $j$ such that $x\not \sim_{A_1,A_j}^\str A y$ and this is witnessed by some $w\in A_j$.

Note that the equivalent definition of $\sim^\str A$ from the statement of this lemma is a first-order definition of $\sim^\str A$ from the edge relation of $\str A$, and hence $\sim^\str A$ is preserved by automorphisms of $\str A$.
\end{proof}

\begin{lemma}\label{lem:semig_sat_twin}
Let $\str A$ be a semigeneric tournament with $k$ parts $A_1,\ldots, A_k$.

\begin{enumerate}
  \item $\str A$ is twinless if and only if for every pair of vertices $u \neq v\in A$ from the same part there is $w \in A$ from a different part such that exactly one of the edges $uw$ and $vw$ is oriented to $w$.
  \item The following are equivalent:
    \begin{enumerate}
      \item\label{it:a} $\str A$ is saturated.
      \item\label{it:b} For every $1\leq i,j\leq k$ with $i\neq j$, the equivalence $\sim_{A_i,A_j}^\str A$ has 2 non-empty equivalence classes, and for every $1\leq i\leq k$ and every sequence $(E_j)_{1\leq j\leq k, j\neq i}$ such that $E_j$ is an equivalence class of $\sim_{A_i,A_j}^\str A$, the intersection $\bigcap E_j$ is non-empty.
      \item\label{it:c} For every $1\leq i\leq k$, for every sequence of vertices $(v_j \in A_j : 1\leq j\leq k, j\neq i)$, and every function $f\colon \{1,\ldots,k\}\setminus \{i\} \to \{0,1\}$, there is a vertex $v\in A_i$ such that, for every $1\leq j \leq k, j\neq i$, the edge is oriented from $v$ to $v_j$ if and only if $f(j) = 0$.

  \end{enumerate}
\end{enumerate}
\end{lemma}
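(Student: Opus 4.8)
The plan is to get part~(1) straight out of Lemma~\ref{lem:sem_autom_sim}, and to prove part~(2) by the chain (a)$\iff$(b) (a counting argument about common refinements of index-$\le 2$ equivalences) together with (b)$\iff$(c) (a routine translation between ``classes of $\sim_{A_i,A_j}^\str A$'' and ``orientation of edges towards a fixed representative'').

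For part~(1), fix $u\neq v$ in the same part $A_i$. Since $u$ and $v$ lie in the same part, for every $w\in A$ there is an edge between $w$ and $u$ if and only if there is one between $w$ and $v$ (namely exactly when $w\notin A_i$). Hence, by the first-order description of $\sim^\str A$ in Lemma~\ref{lem:sem_autom_sim}, $u\sim^\str A v$ if and only if for every $w$ in a part different from $A_i$ the edges $uw$ and $vw$ have the same orientation. Negating, $u\not\sim^\str A v$ if and only if there is $w$ in a different part for which exactly one of $uw$, $vw$ is oriented to $w$. Since $\str A$ is twinless precisely when $u\not\sim^\str A v$ for all such $u\neq v$, this is exactly the claimed equivalence.

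For part~(2) I would first record the bookkeeping. By Observation~\ref{obs:semig_equivalences_pair}, for a fixed $y\in A_j$ the (at most two) classes of $\sim_{A_i,A_j}^\str A$ are the sets $\{v\in A_i: vy \text{ oriented from } v\}$ and $\{v\in A_i: vy \text{ oriented to } v\}$, at least one of which is nonempty. Since $\sim_{A_i}^\str A=\bigcap_{j\neq i}\sim_{A_i,A_j}^\str A$, each class of $\sim_{A_i}^\str A$ is a nonempty intersection $\bigcap_{j\neq i}E_j$ with $E_j$ a class of $\sim_{A_i,A_j}^\str A$; distinct such intersections are disjoint, so the number of classes of $\sim_{A_i}^\str A$ equals the number of choice-sequences $(E_j)_{j\neq i}$ with nonempty intersection. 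Thus $\sim_{A_i}^\str A$ has at most $2^{k-1}$ classes, with equality if and only if every $\sim_{A_i,A_j}^\str A$ has two nonempty classes \emph{and} every choice-sequence has nonempty intersection. As $\sim^\str A$ is the disjoint union of the $\sim_{A_i}^\str A$, it has at most $k2^{k-1}$ classes, with equality if and only if each $\sim_{A_i}^\str A$ attains $2^{k-1}$ classes; combining the two displayed equality conditions gives (a)$\iff$(b) at once. For (b)$\iff$(c): given $i$, a sequence $(v_j\in A_j)_{j\neq i}$ and $f\colon\{1,\dots,k\}\setminus\{i\}\to\{0,1\}$, let $E_j$ be the class of $\sim_{A_i,A_j}^\str A$ consisting of those $v\in A_i$ for which the edge is oriented from $v$ to $v_j$ exactly when $f(j)=0$ (using, for the direction (b)$\Rightarrow$(c), that both classes are nonempty so that $E_j$ is a genuine class). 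A vertex $v\in A_i$ realizes $f$ against $(v_j)$ precisely when $v\in\bigcap_{j\neq i}E_j$, so (b)$\Rightarrow$(c) is exactly nonemptiness of this intersection; conversely (c)$\Rightarrow$(b) runs the same translation backwards, picking representatives $v_j\in A_j$ to encode an arbitrary class-sequence and using single-coordinate instances of (c) to see that each $\sim_{A_i,A_j}^\str A$ has two nonempty classes.

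The only genuinely delicate point I foresee is the ``with equality if and only if'' clause in the refinement count: one must argue both that the bound $2^{k-1}$ strictly drops as soon as some $\sim_{A_i,A_j}^\str A$ collapses to a single nonempty class, and that attaining $k2^{k-1}$ forces every one of the $2^{k-1}$ class-selections (for each $i$) to be realized by some vertex. Everything else is a mechanical unwinding of the semigenericity data provided by Observations~\ref{obs:semig_equiv_weak} and~\ref{obs:semig_equivalences_pair}.
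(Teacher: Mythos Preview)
Your proposal is correct and matches the paper's proof almost exactly: part~(1) is the contrapositive of Lemma~\ref{lem:sem_autom_sim}, (a)$\iff$(b) is the same counting argument on refinements of index-$\le 2$ equivalences, and the translation between class-sequences $(E_j)$ and orientation patterns against representatives $(v_j)$ is precisely the paper's (b)$\Rightarrow$(c). The only cosmetic difference is that the paper closes the cycle via (c)$\Rightarrow$(a) directly (exhibiting $2^{k-1}$ pairwise $\sim_{A_i}^{\str A}$-inequivalent vertices $v_f$) rather than your (c)$\Rightarrow$(b), but since you already have (a)$\iff$(b) this is immaterial.
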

\begin{proof}
Clearly, $\sim^\str A$ has a non-singleton equivalence class if and only if there is a pair of vertices connected in the same way to every other vertex.

Similarly, the equivalence (\ref{it:a})~$\iff$~(\ref{it:b}) is easy: $\sim^\str A$ is the union of $k$ equivalences, each of them being an intersection of $k-1$ equivalences of index at most two, hence $\sim^\str A$ can have at most $k2^{k-1}$ equivalence classes and there is only one way how it can happen.

Next, we prove that (\ref{it:b})~$\implies$~(\ref{it:c}). Fix $1\leq i\leq k$ and a sequence of vertices $(v_j \in A_j : 1\leq j\leq k, j\neq i)$. For every $j\neq i$, enumerate the equivalence classes of $\sim_{A_i,A_j}^\str A$ as $E_{j}^0$ and $E_{j}^1$ such that for every $v\in A_i$ we have an edge from $v$ to $v_j$ if and only if $v\in E_{j}^0$. Given a function $f\colon \{1,\ldots,k\}\setminus \{i\} \to \{0,1\}$, there is $v\in \bigcap_{1\leq j\leq k, j\neq i} E_{j}^{f(j)}$ by~(\ref{it:b}) which proves~(\ref{it:c}).

To see that (\ref{it:c})~$\implies$~(\ref{it:a}), fix $1\leq i\leq k$ and an arbitrary sequence of vertices $(v_j \in A_j: 1\leq j\leq k, j\neq i)$. Given $f\colon \{1,\ldots,k\}\setminus \{i\} \to \{0,1\}$, let $v_f\in A_i$ be the vertex given by~(\ref{it:c}). Note that if $f\neq f'$ then $v_f \not\sim_{A_i}^\str A v_{f'}$ (as witnessed by any $v_j$ for $j$ such that $f(j)\neq f'(j)$). This means that $\sim_{A_i}^\str A$ has $2^{k-1}$ equivalence classes for every $1\leq i\leq k$ from which~(\ref{it:a}) follows.
\end{proof}

\begin{prop}\label{prop:semig:factor}
Let $\str A$ be a semigeneric tournament. Let $A/{\sim^\str A}$ be the set of all equivalence classes of $\sim^\str A$ and define a directed graph $\str A/{\sim}$ with vertex set $A/{\sim^\str A}$ such that there is an edge from $[u]$ to $[v]$ in $\str A/{\sim}$ if and only if there is an edge from $u$ to $v$ in $\str A$.
\begin{enumerate}
  \item $\str A/{\sim}$ is a semigeneric tournament, and if $X$ is a part of $\str A$ then $\{[x] : x\in X\}$ is a part of $\str A/{\sim}$.
  \item The map $u\mapsto [u]$ is a homomorphism $\str A\to \str A/{\sim}$ such that if $[u][v]$ is an edge of $\str A/{\sim}$ then $uv$ is an edge of $\str A$.
  \item If $f$ is an automorphism of $\str A$ then its action $[u] \mapsto [f(u)]$ on $\str A/{\sim}$ is well-defined and it is an automorphism of $\str A/{\sim}$.
  \item $\str A/{\sim}$ is twinless.
  \item If $\str A$ is saturated then so is $\str A/{\sim}$.
\end{enumerate}
\end{prop}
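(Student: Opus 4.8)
The plan is to exploit that, by construction, the edge relation of $\str A/{\sim}$ is a faithful copy of the edge relation of $\str A$, so that once the quotient is shown to be well-defined, each of the five assertions descends almost mechanically. First I would check well-definedness: if $u\sim^\str A u'$, then by the characterization of $\sim^\str A$ in Lemma~\ref{lem:sem_autom_sim}, $u$ and $u'$ are joined to every vertex of $\str A$ in the same way, so in particular there is an edge from $u$ to $v$ if and only if there is one from $u'$ to $v$; varying the second coordinate as well settles that the edge relation of $\str A/{\sim}$ does not depend on the choice of representatives. In particular $\str A/{\sim}$ has no loops and no bidirectional edges, hence is a directed graph, and the map $u\mapsto[u]$ both preserves and reflects edges by the very definition of $\str A/{\sim}$, which gives (2).

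For (1): since $\sim^\str A=\bigcup_i\sim_{A_i}^\str A$ with $\sim_{A_i}^\str A\subseteq A_i^2$, every $\sim^\str A$-class is contained in a single part of $\str A$; hence the sets $[X]:=\{[x]:x\in X\}$, for $X$ a part of $\str A$, are well-defined and partition $A/{\sim^\str A}$. Two classes $[x],[y]$ with $x\in X$ and $y\in Y$ are adjacent in $\str A/{\sim}$ precisely when $X\neq Y$ (because $x$ and $y$ are adjacent in $\str A$ precisely then), so $\str A/{\sim}$ is a multipartite tournament with parts the $[X]$. Semigenericity is inherited verbatim: for parts $[X]\neq[Y]$, $[a]\neq[b]\in[X]$ and $[c]\neq[d]\in[Y]$ (so $a\neq b$ and $c\neq d$), the number of edges from $\{[a],[b]\}$ to $\{[c],[d]\}$ equals the number of edges from $\{a,b\}$ to $\{c,d\}$ in $\str A$, which is even.

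For (3) and (4): by Lemma~\ref{lem:sem_autom_sim} every automorphism $f$ of $\str A$ preserves $\sim^\str A$, and so does $f^{-1}$, so $[u]\mapsto[f(u)]$ is a well-defined bijection of $A/{\sim^\str A}$; it is an automorphism of $\str A/{\sim}$ because there is an edge from $[u]$ to $[v]$ iff there is one from $u$ to $v$ iff there is one from $f(u)$ to $f(v)$ iff there is one from $[f(u)]$ to $[f(v)]$. For twinlessness, suppose $[u]\sim^{\str A/{\sim}}[v]$; applying Lemma~\ref{lem:sem_autom_sim} to $\str A/{\sim}$ and using that adjacency and orientation between $u$ and $w$ in $\str A$ depend only on $[u]$ and $[w]$, one gets that $u$ and $v$ are joined to every vertex of $\str A$ in the same way, hence $u\sim^\str A v$ and $[u]=[v]$; so all classes of $\sim^{\str A/{\sim}}$ are singletons (alternatively one verifies the criterion of Lemma~\ref{lem:semig_sat_twin}(1)).

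For (5) I would not chase the definition of $\sim^{\str A/{\sim}}$ directly, but instead verify condition~(\ref{it:c}) of Lemma~\ref{lem:semig_sat_twin}(2) for $\str A/{\sim}$: given a part $[A_i]$, classes $[v_j]\in[A_j]$ for $j\neq i$ (picking representatives $v_j\in A_j$), and $f\colon\{1,\dots,k\}\setminus\{i\}\to\{0,1\}$, condition~(\ref{it:c}) for the saturated $\str A$ produces $v\in A_i$ with the edge oriented from $v$ to $v_j$ exactly when $f(j)=0$; then $[v]\in[A_i]$ and the edge of $\str A/{\sim}$ is oriented from $[v]$ to $[v_j]$ exactly when $f(j)=0$, so $\str A/{\sim}$ is saturated. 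I expect (5) to be the only step that is not a pure definitional unwinding, precisely because it must pass through the combinatorial characterization in Lemma~\ref{lem:semig_sat_twin} rather than through the fact that $\str A/{\sim}$ copies the edges of $\str A$; everything else follows directly from that fact.
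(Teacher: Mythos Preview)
Your proposal is correct and follows essentially the same approach as the paper: well-definedness via Lemma~\ref{lem:sem_autom_sim}, parts (1)--(4) as direct consequences of the fact that edges in $\str A/{\sim}$ faithfully copy edges in $\str A$, and part (5) by verifying condition~(\ref{it:c}) of Lemma~\ref{lem:semig_sat_twin} by lifting representatives to $\str A$. The paper is somewhat terser (it dispatches (1)--(4) in a couple of sentences and asserts twinlessness ``by definition''), but your more explicit unwindings for semigenericity and twinlessness are the intended ones.
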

\begin{proof}
Lemma~\ref{lem:sem_autom_sim} implies that the existence of edge between $[u]$ and $[v]$ it does not depend on the choice of $u\in [u]$ and $v\in [v]$. Consequently, $\str A/{\sim}$ is a semigeneric tournament, it has the same parts as $\str A$, automorphisms of $\str A$ give automorphisms of $\str A/{\sim}$, and the map $u\mapsto [u]$ has the desired properties. Finally, $\str A/{\sim}$ is, by definition, twinless.

Suppose now that $\str A$ is saturated. Enumerate the parts of $\str A$ as $A_1,\ldots, A_k$, and, for convenience, denote the parts of $\str A/{\sim}$ as $[A_1],\ldots,[A_k]$ such that $[A_i]$ consists of the equivalence classes of elements of $A_i$.

Pick some $1\leq i\leq k$, a sequence of vertices $([v_j] \in [A_j] : 1\leq j\leq k, j\neq i)$, and a function $f\colon \{1,\ldots,k\}\setminus \{i\} \to \{0,1\}$. This gives us a sequence of vertices $(v_j \in A_j : 1\leq j\leq k, j\neq i)$, and by saturation of $\str A$ we get a vertex $v\in A_i$ such that, for every $1\leq j \leq k, j\neq i$, the edge is oriented from $v$ to $v_j$ if and only if $f(j) = 0$. Consequently, we have $[v]\in [A_i]$ witnessing that $\str A/{\sim}$ is saturated.
\end{proof}

\begin{prop}\label{prop:ample_twinless}
For every finite semigeneric tournament $\str A$ there exists a finite saturated twinless semigeneric tournament $\str B$ which is an EPPA-witness for $\str A$.
\end{prop}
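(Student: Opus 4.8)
The plan is to feed $\str A$ into the construction of Section~\ref{sec:semig}, obtaining the EPPA-witness $\str B$ together with the embedding $\psi\colon\str A\to\str B$ and $\str A'=\psi(\str A)$, and then to pass to the twinless quotient $\str C=\str B/{\sim}$, using Proposition~\ref{prop:semig:factor} to keep track of the relevant properties. By Theorem~\ref{thm:semigeneric} the structure $\str B$ is an EPPA-witness for $\str A'$, so it remains to see that $\str B$ is already saturated, that $\str A$ still embeds into $\str C$, and that $\str C$ inherits the EPPA-witness property.

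First I would check that $\str B$ is saturated, using the criterion of Lemma~\ref{lem:semig_sat_twin}(2)(c). Fix a part $(A_i,h)$ of $\str B$, for each other part $(A_p,g)$ a vertex $\str y_{(A_p,g)}=((A_p,g),y_{(A_p,g)},\xi_{(A_p,g)})$, and a function $f$ prescribing the desired orientations. Pick any $x\in A_i$ (nonempty) and, for each $(A_p,g)\neq(A_i,h)$, solve over $\mathbb Z_2$ the single linear equation for $\chi((A_p,g))$ coming from the definition of $V(\cdot,\cdot)$ and the edge rule of $\str B$ (possible since $\chi((A_p,g))$ occurs with coefficient $1$ in $V(\str x,\str y_{(A_p,g)})$); set $\chi((A_i,h))$ arbitrarily. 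Then $((A_i,h),x,\chi)$ realizes $f$, so $\str B$ is saturated. By Proposition~\ref{prop:semig:factor}(4) and~(5), $\str C=\str B/{\sim}$ is therefore a finite saturated twinless semigeneric tournament, and by Proposition~\ref{prop:semig:factor}(1),(2), the definition of $\str B/{\sim}$, and Lemma~\ref{lem:sem_autom_sim}, the quotient map $q\colon\str B\to\str C$ preserves and reflects edges, their orientations, and the partition.

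Next I would show that $q$ is injective on $\str A'$, so that $q\restriction\str A'$ is an embedding $\str A\to\str C$; this is the one place where the explicit construction is needed (mere abstract existence of an EPPA-witness would not do, since $\str C$ is twinless while $\str A$ may have twins). Let $u\neq v\in A$. If $\psi(u),\psi(v)$ are in different parts of $\str B$ there is nothing to prove. If they are in the same part but $u\not\sim^\str A v$, a witnessing vertex $w$ (necessarily in another part) maps under $\psi$ to a vertex distinguishing $\psi(u)$ and $\psi(v)$ in the sense of Lemma~\ref{lem:sem_autom_sim}, hence $\psi(u)\not\sim^\str B\psi(v)$. In the remaining case $u\sim^\str A v$ in a common part $A_p$, one reads off from the formula defining $\chi_x$ that $\chi_u=\chi_v$ (the value $\chi_x((A_i,h))$ depends only on the part index $i$ and on the orientations of the edges joining $x$ to the representatives $y^i$, both of which coincide for $u$ and $v$); since $\psi(u)$ and $\psi(v)$ still differ in their second coordinate, any part of $\str B$ of the form $(A_{p'},g)$ with $g(u)\neq g(v)$ — such $g$ exists because $u\neq v$ and, for nonempty $\str A$, $|Q|\geq 2$ — contains vertices to which $\psi(u)$ and $\psi(v)$ are oppositely oriented, so again $\psi(u)\not\sim^\str B\psi(v)$. (The case of a single-vertex or empty $\str A$ is trivial, as then $\str A$ has no twins and is already saturated and twinless.)

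Finally I would transport EPPA through the quotient. Put $\str A''=q(\str A')$, so that $q\restriction\str A'$ is an isomorphism $\str A'\to\str A''$ and $\str A''$ is a copy of $\str A$ in $\str C$. Given a partial automorphism $\varphi''$ of $\str A''$, conjugate it by $q\restriction\str A'$ to a partial automorphism $\varphi$ of $\str A'$, extend $\varphi$ to an automorphism $\theta$ of $\str B$ using Theorem~\ref{thm:semigeneric}, and let $\bar\theta$ be the automorphism of $\str C$ induced by $\theta$ via Proposition~\ref{prop:semig:factor}(3); a direct check on domains shows $\bar\theta$ extends $\varphi''$. Hence $\str C$ is an EPPA-witness for $\str A''$, and composing with an isomorphism gives a saturated twinless semigeneric tournament which is an EPPA-witness for $\str A$ itself. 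The main obstacle is the injectivity of $q$ on $\psi(\str A)$, i.e. the verification that the construction of Section~\ref{sec:semig} never maps twins of $\str A$ to twins of $\str B$; everything else is either quoted from Proposition~\ref{prop:semig:factor} or a short linear-algebra check over $\mathbb Z_2$.
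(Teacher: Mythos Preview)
Your proposal is correct and follows essentially the same route as the paper: build $\str B$ via Section~\ref{sec:semig}, verify saturation by solving for $\chi$ one coordinate at a time, pass to the quotient $\str B/{\sim}$ via Proposition~\ref{prop:semig:factor}, and check that $\psi(\str A)$ survives the quotient. The only cosmetic difference is that the paper handles injectivity of the quotient map on $\psi(\str A)$ without your case split on $u\sim^{\str A}v$: for $u\neq v$ in the same part $A_i$ it simply uses the part $(A_i,h)$ with $h(u)=1$ and $h\equiv 0$ elsewhere, noting that $\chi_u((A_i,h))=\chi_v((A_i,h))=0$ holds by the first clause of the definition of $\chi_x$ regardless of the relation between $u$ and $v$.
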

\begin{proof}
Let $\str B$ be the semigeneric tournament constructed in Section~\ref{sec:semig} for $\str A$. First, we prove that $\str B$ is saturated:

By Lemma~\ref{lem:semig_sat_twin} we need to prove that for every $(A_i,h)\in Q$, for every sequence $(\str v_{(A_j,g)})_{(A_j,g) \in Q\setminus \{(A_i,h)\}}$, and for every function $f\colon Q\setminus \{(A_i,h)\}\to \{0,1\}$, there is $\str v\in B$ connected to $(\str v_{(A_j,g)})_{(A_j,g) \in Q\setminus \{(A_i,h)\}}$ according to $f$. Pick an arbitrary $v\in A_i$ and observe that there is a vertex valuation function $\chi\colon Q\to\mathbb Z_2$ such that $\str v = ((A_i,h),v,\chi)$ has precisely the desired edges to $(\str v_{(A_j,g)})_{(A_j,g) \in Q\setminus \{(A_i,h)\}}$, as for each $(A_j,g) \in Q\setminus \{(A_i,h)\}$ we can make an independent choice for $\chi((A_j,g))$ and determine the direction of the edge between $\str v$ and $\str v_{(A_j,g)}$. Consequently, $\str B$ is indeed saturated.

\medskip

Recall the definition of $\psi\colon \str A \to \str B$ and that we put $\str A' = \psi(\str A)$ (so that $\str B$ is an EPPA-witness for $\str A'$). We claim that for every $u\neq v\in A$ it holds that $\psi(u) \not\sim^\str B \psi(v)$. Indeed: If $u$ and $v$ are from different parts of $\str A$ then $\psi(u)$ and $\psi(v)$ are from different parts of $\str B$, hence they are not equivalent.

If $u,v\in A_i$ for some $i$, denote $\psi(u) = ((A_i, h_0), u, \chi_u)$ and $\psi(v) = ((A_i, h_0), v, \chi_v)$. From the definition of $\chi_u$ and $\chi_v$ (see Section~\ref{sec:semig:const}) we have that, for every part valuation function $h\colon A\to\mathbb Z_2$, $\chi_u((A_i,h)) = \chi_v((A_i,h)) = 0$. Let $h\colon A\to\mathbb Z_2$ be the part valuation function such that $h(u) = 1$ and $h(x) = 0$ for every $x\neq u$, and let $\str x$ be an arbitrary vertex of $\str B$ from the part $(A_i,h)$. It follows that the directions of edges $\str x\psi(u)$ and $\str x\psi(v)$ disagree, hence $\psi(u) \not\sim^\str B \psi(v)$.

\medskip

Define function $\psi'\colon  \str A\to \str B/{\sim}$ by $\psi'(x) = [\psi(x)]$. Note that this is a composition of an embedding $\psi$ and the function $u\mapsto [u]$ which is injective on $A'$ and which, by Proposition~\ref{prop:semig:factor}, is a non-edge-preserving homomorphism, hence $\psi'$ is an embedding. We will show that $\str B/{\sim}$ is a finite saturated twinless semigeneric tournament which is an EPPA-witness for $\psi'(\str A)$.

The fact that it is a finite saturated twinless semigeneric tournament follows directly from Proposition~\ref{prop:semig:factor}. Let $\varphi$ be a partial automorphism of $\psi'(\str A)$. By taking an isomorphism, we can consider it to be a partial automorphism of $\psi(\str A)$ and extend it to an automorphism $\theta\colon \str B\to \str B$. Let $\theta'\colon \str B/{\sim} \to \str B/{\sim}$ satisfy $\theta'([x]) = [\theta(x)]$. By Proposition~\ref{prop:semig:factor}, this is a well-defined automorphism of $\str B/{\sim}$, and it is easy to see that it extends $\varphi$.
\end{proof}

\begin{lemma}\label{lem:semigeneric_apa}
Semigeneric tournaments have APA over saturated twinless structures.
\end{lemma}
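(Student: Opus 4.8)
The plan is to adapt the amalgamation of Lemma~\ref{lem:omegapart_apa}. Assume $\alpha_1,\alpha_2$ are inclusions with $B_1\cap B_2=A$, set $C=B_1\cup B_2$, and merge parts exactly as there, so that each part $A_i$ of $\str A$ lies inside a unique part $P^i_1$ of $\str B_1$ and a unique part $P^i_2$ of $\str B_2$ with $P^i_1\cap A=P^i_2\cap A=A_i$, and the parts of $\str C$ are the merged parts $P^i_1\cup P^i_2$ together with the parts of $\str B_1$ and of $\str B_2$ that avoid $A$ (call these the \emph{new parts}). Edges inside $B_1$, resp.\ inside $B_2$, are taken from $\str B_1$, resp.\ $\str B_2$ (these agree on $A$); the whole task is to orient the edges between $B_1\setminus A$ and $B_2\setminus A$ so that $\str C$ is semigeneric and $f\cup g$ is an automorphism of $\str C$ whenever $f\in\Aut(\str B_1)$ and $g\in\Aut(\str B_2)$ restrict to the same automorphism of $\str A$.

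First I would isolate what saturation and twinlessness of $\str A$ buy us. For $r\in\{1,2\}$ and $i\neq j$, the restriction of $\sim_{P^i_r,P^j_r}^{\str B_r}$ to $A_i$ is contained in $\sim_{A_i,A_j}^{\str A}$, which has two non-empty classes by saturation (Lemma~\ref{lem:semig_sat_twin}); as $\sim_{P^i_r,P^j_r}^{\str B_r}$ has at most two classes, it must have exactly two classes, each meeting $A_i$, coinciding with $\sim_{A_i,A_j}^{\str A}$ on $A_i$. Enumerate the two classes of $\sim_{A_i,A_j}^{\str A}$ as $0$ and $1$, once and for all for every $i\neq j$. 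Then every vertex $v$ of a merged part $P^i_1\cup P^i_2$ has a \emph{type} $(\lambda_{ij}(v))_{j\neq i}\in\{0,1\}^{\{1,\dots,k\}\setminus\{i\}}$ computed in whichever of $\str B_1,\str B_2$ contains $v$ (the two computations agree on $A_i$). By saturation and twinlessness the type map on $A_i$ is a bijection onto $\{0,1\}^{\{1,\dots,k\}\setminus\{i\}}$, so there is a unique $\widehat v\in A_i$ with the same type as $v$; put $\widehat v=v$ when $v$ lies in a new part, so also $\widehat v=v$ for $v\in A$. Because an automorphism of $\str B_r$ preserves the relations $\sim_{\cdot,\cdot}^{\str B_r}$ while $\lambda_{ij}(v)$ depends only on the $\sim_{P^i_r,P^j_r}^{\str B_r}$-class of $v$, we get $\widehat{f(v)}=f(\widehat v)$ for $v\in B_1$ and $\widehat{g(v)}=g(\widehat v)$ for $v\in B_2$ whenever $f,g$ restrict to the same automorphism of $\str A$ (in particular $f(A)=g(A)=A$).

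Now orient an edge between $u\in B_1\setminus A$ and $v\in B_2\setminus A$ that lie in different parts of $\str C$ in the same direction as the edge between $\widehat u$ and $\widehat v$: interpreted in $\str A$ if $\widehat u,\widehat v\in A$, in $\str B_2$ if $\widehat u\in A$ and $\widehat v=v\notin A$, in $\str B_1$ if $\widehat v\in A$ and $\widehat u=u\notin A$, and from $u$ to $v$ if both lie in new parts. Then $\str B_1,\str B_2$ are induced substructures of $\str C$ (edges inside $B_r$ were copied verbatim, and a non-edge of $\str C$ inside $B_r$ is a non-edge of $\str B_r$ since each part of $\str C$ meets $B_r$ in at most one part of $\str B_r$), so $\str C$ amalgamates $\str B_1$ and $\str B_2$ over $\str A$. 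Since semigenericity is a condition on pairs of parts, fix parts $X,Y$ of $\str C$ and vertices $a\neq b\in X$, $c\neq d\in Y$; replacing every vertex outside $A$ by its $\widehat{\cdot}$ turns the four edges in question into four edges of $\str A$, of $\str B_1$, or of $\str B_2$ between two fixed parts, which have an even number directed forward by semigenericity of that structure (the subcase where two of the hatted vertices coincide being even for free), and this transfers back since each edge of $\str C$ is oriented like its hatted edge.

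For APA, let $f\in\Aut(\str B_1)$, $g\in\Aut(\str B_2)$ restrict to the same $\sigma\in\Aut(\str A)$ with $f(A)=g(A)=A$. Then $h=f\cup g$ is a well-defined bijection of $C$; as $\sigma$ permutes the parts of $\str A$, $h$ sends $P^i_1\cup P^i_2$ to $P^{\sigma(i)}_1\cup P^{\sigma(i)}_2$ and new parts to new parts, so $h$ permutes the parts of $\str C$ and preserves non-edges. Edges inside $B_1$, resp.\ $B_2$, are preserved because $f$, resp.\ $g$, is an automorphism; a new edge $uv$ is preserved because $h$ maps it to the edge between $f(u)$ and $g(v)$, whose orientation by our rule matches that of the edge between $\widehat{f(u)}=f(\widehat u)$ and $\widehat{g(v)}=g(\widehat v)$, and that edge is the $f$- or $g$-image of the edge between $\widehat u$ and $\widehat v$ (using that $\widehat u,\widehat v$ lie in $A$, where $f$ and $g$ agree, or are fixed by $\widehat{\cdot}$), hence oriented like it. So $h\in\Aut(\str C)$ extends $\beta_1 f\beta_1^{-1}\cup\beta_2 g\beta_2^{-1}$, and $\str C$ with the inclusions is the desired APA-witness. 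I expect the real obstacle to be precisely the new parts: the naive recipe of Lemma~\ref{lem:omegapart_apa} (orient every $B_1\setminus A$--$B_2\setminus A$ edge ``towards $\str B_2$'') violates the parity condition as soon as a new part meets a merged part, and the fix of routing each such endpoint through its $\widehat{\cdot}$-twin in $\str A$ is exactly what needs $\str A$ saturated (so the twin exists) and twinless (so it is unique), and is also what makes the construction commute with $f\cup g$.
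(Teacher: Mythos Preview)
Your construction is the same as the paper's (the paper also routes each merged-part vertex through its unique $\sim$-twin in $A$ to define the cross edges, just phrased case-by-case), and your automorphism argument is correct. But there is a genuine gap in the semigenericity check.

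You assert that ``each edge of $\str C$ is oriented like its hatted edge'', and use this to transfer the parity count back from the hatted quadruple. This is \emph{false} when the edge lies inside $\str B_1$ (say) with one endpoint $a\in P^i_1\setminus A$ in a merged part and the other endpoint $c$ in a \emph{new} part $Y\subseteq B_1\setminus A$. Your $\widehat a\in A_i$ is chosen so that $a\sim_{P^i_1,P^j_1}^{\str B_1}\widehat a$ for every merged part $P^j_1$, but there is no reason at all why $a\sim_{P^i_1,Y}^{\str B_1}\widehat a$ for a new part $Y$; so the $\str B_1$-edge $ac$ need not have the same orientation as $\widehat a\,c$. (A three-part $\str B_1$ over a two-part saturated twinless $\str A$ already gives counterexamples.)

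This does not break the strategy, only the write-up. The easy repair is: do \emph{not} hat everything outside $A$, but only the vertices that actually need moving to get the whole quadruple into a single $\str B_r$. Concretely, if $Y$ is a new part of $\str B_1$ then $c,d\in B_1$, so keep $c,d$ and also keep any of $a,b$ that lie in $B_1$; only replace those of $a,b$ lying in $B_2\setminus A$ by their hats (which lie in $A\subseteq B_1$), and then the four edges $\{a\text{ or }\widehat a,\ b\text{ or }\widehat b\}\times\{c,d\}$ are all honest $\str B_1$-edges agreeing with the $\str C$-edges by construction. The remaining cases (both parts merged; both parts new; new-$B_1$ versus new-$B_2$) go through exactly as you wrote them. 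This is precisely what the paper does: it singles out the configuration where one part is new and only replaces the cross-side endpoints by their $A$-representatives, rather than asserting a global ``hatted edge'' identity.
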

\begin{proof}
We will proceed analogously to Lemma~\ref{lem:omegapart_apa}. Fix $\str A$, $\str B_1$ and $\str B_2$ such that $\str A$ is a substructure of $\str B_1$ and $\str B_2$, $\str A$ is twinless and saturated, and $B_1\cap B_2 = A$.

Let $A_1,\ldots, A_k$ be the parts of $\str A$. Since $\str A$ is saturated and twinless, for every equivalence class $E$ of $\sim^\str A$ we can denote $E = \{ a_{E} \}$.

Put $C = B_1\cup B_2$ and define a partition of $C$ using the partitions of $B_1$ and $B_2$, merging only those parts with non-empty intersections. Let $C^1,\ldots, C^n$ be the resulting partition.

Next, we will define a semigeneric tournament $\str C$ with vertex set $C$ and parts $C^1,\ldots, C^n$. Given $i\neq j$, $x\in C^i$ and $y\in C^j$, we define the orientation of the edge $xy$ as follows:
\begin{enumerate}
\item\label{sem_apa:1} If $x,y\in B_1$ or $x,y\in B_2$ then we copy the orientation from $\str B_1$ resp. $\str B_2$.
\item\label{sem_apa:2} If $C^i \cap A = C^j\cap A = \emptyset$ then we orient the edge from $x$ to $y$ if and only if $x\in B_1$ and $y\in B_2$. (Otherwise $x\in B_2$ and $y\in B_1$ and we orient the edge from $y$ to $x$; in other words, we orient all these edges from $\str B_1$ to $\str B_2$.)
\item\label{sem_apa:3} If there are $u\in C^i\cap A$ and $v\in C^j\cap A$ (and hence the edges $uv$, $uy$ and $vx$ have been already oriented by~(\ref{sem_apa:1})), we orient the edge so that the four vertices $x,y,u,v$ induce a semigeneric tournament (there is exactly one such choice).
\item\label{sem_apa:4} In the remaining case we have without loss of generality $C^j\subseteq B_2\setminus A$, and some $v\in C^i\cap A$. Moreover, $x\in B_1\setminus A$ as otherwise the edge would have been oriented by~(\ref{sem_apa:1}). Let $\str B_1'$ be the substructure of $\str B_1$ induced on the parts with non-empty intersection with $\str A$. Since $\str A$ is saturated, so is $\str B_1'$, and moreover, equivalence class of $\sim^{\str B_1'}$ contains a unique vertex of $\str A$, because $\str A$ is twinless and saturated.

Consequently, there is a unique vertex $a_{A\cap E}\in A$ such that $a_{A\cap E} \sim^{\str B_1'} x$. Now, orient the edge $xy$ from $x$ to $y$ if and only if the edge $a_{A\cap E}y$ is oriented from $a_{A\cap E}$ to $y$.
\end{enumerate}

Clearly, the cases~(\ref{sem_apa:1})--(\ref{sem_apa:4}) are disjoint and together cover all possibilities, hence $\str C$ is an $n$-partite tournament with parts $C^1,\ldots,C^n$. Moreover, $\str B_1$ and $\str B_2$ are substructures of $\str C$, hence $\str C$ is an amalgam of $\str B_1$ and $\str B_2$ over $\str A$. It remains to prove that $\str C$ is a semigeneric tournament and that it has all the desired automorphisms.

\medskip

First we will see that $\str C$ is a semigeneric tournament. To see this, pick arbitrary vertices $x_1,x_2,y_1,y_2\in C$ such that $x_1$ and $x_2$ are from the same part and $y_1$ and $y_2$ are also from the same part. It is easy to see that if the edges in this quadruple have only been determined using cases~(\ref{sem_apa:1})--(\ref{sem_apa:3}) then this quadruple indeed satisfies the semigeneric condition.

In the remaining case we have $x_1,x_2\in C^i$, $y_1,y_2\in C^j$, and, without loss of generality, $C^j\subseteq B_2 \setminus A$, $C^i\subseteq B_1$, and $C^i\cap A\neq \emptyset$. If $x_1,x_2\in A$ then we are in case~(\ref{sem_apa:1}), hence at least one of $x_1$ and $x_2$ is from $B_1\setminus A$. If $x_1\in B_1\setminus A$, we know that its edges have been copied from a suitable $a_E\in A$, thus we can replace $x_1$ by this $a_E$ and verify the semigeneric condition for $a_E,x_2,y_1,y_2$ instead. We can do the same for $x_2$ and then we get a quadruple of vertices from $\str B_2$ for which we know that the semigeneric condition is satisfied.

\medskip

Let $f_1\colon \str B_1\to \str B_1$ and $f_2\colon \str B_2\to \str B_2$ be automorphisms such that $f_1\restriction_A = f_2\restriction_A$. Put $f = f_1\cup f_2$. Similarly as in Lemma~\ref{lem:omegapart_apa} we can see that $f$ is a part-preserving bijection $C\to C$. It remains to see that if $x\in B_1\setminus A$ and $y\in B_2\setminus A$ are from different parts then there is an edge from $x$ to $y$ in $\str C$ if and only if there is an edge from $f(x)$ to $f(y)$ in $\str C$. This is clear if the direction of the edge $xy$ (and hence also of $f(x)f(y)$) has been determined by cases~(\ref{sem_apa:2}) or~(\ref{sem_apa:3}) -- one just needs to observe that $f$ preserves whether a part of $\str C$ has a non-empty intersection with $A$.

So the direction of the edge $xy$ (and hence also of $f(x)f(y)$) has been determined by case~(\ref{sem_apa:4}). Suppose that $x\in C^i$, $y\in C^j$ and assume without loss of generality that $C^j\subseteq B_2\setminus A$, $C^i\cap A\neq \emptyset$, and $x\in B_1\setminus A$. Let $\str B_1'$ be the substructure of $\str B_1$ induced on the parts with a non-empty intersection with $\str A$.

Let $E$ be the equivalence class of $\sim^{\str B_1'}$ containing $x$. By Lemma~\ref{lem:sem_autom_sim}, $f(E)$ is the equivalence class of $\sim^{\str B_1'}$ containing $f(x)$, and we know that $E\cap A = \{a_E\}$ and $f(E)\cap A = \{a_{f(E)}\} = \{f(a_E)\}$. Since $f$ preserves $A$ we get that $f(C^j) \cap A = \emptyset$, and thus the direction of the edge $f(x)f(y)$ was also determined by case~(\ref{sem_apa:4}). Consequently, the direction of the edge $xy$ was copied from the direction of $a_Ey$ and the direction of the edge $f(x)f(y)$ was copied from the direction of $f(a_E)f(y)$. But $a_E,y\in \str B_2$ and thus $f(a_E) = f_2(a_E)$ and $f(y) = f_2(y)$, and since $f_2$ is an automorphism, we get that the orientations of $a_Ey$ and $f(a_E)f(y)$ agree. Consequently, $f$ is indeed an automorphism $\str C\to \str C$.
\end{proof}

We can now prove Theorem~\ref{thm:semig_ample}.
\begin{proof}[Proof of Theorem~\ref{thm:semig_ample}]
  Lemma~\ref{lem:semigeneric_apa} gives APA for semigeneric tournaments over saturated twinless structures, and the empty structure is both saturated and twinless. Proposition~\ref{prop:ample_twinless} tells us that we can always construct saturated twinless EPPA-witnesses. Theorem~\ref{thm:kr} and Proposition~\ref{prop:ample_apa} thus imply ample generics for the \Fraisse{} limit of the class of all finite semigeneric tournaments.
\end{proof}

\section{Conclusion}\label{sec:conclusion}
Question~\ref{q:tournaments} is a relaxation of the question whether directed graphs with no independent set of size $k$  (which for $k=2$ means tournaments) have EPPA. Note that the following further relaxation of Question~\ref{q:tournaments} is trivially true:
\begin{observation}\label{obs:trivial}
For every $k\geq 2$ and every $n$ there is $\ell = \ell(k, n)$ such that for every directed graph $\str A$ on $n$ vertices which contains no independent set of size $k$ there is a directed graph $\str B$ which contains no independent set of size $\ell$ such that $\str A\subseteq \str B$ and every partial automorphism of $\str A$ extends to an automorphism of $\str B$.
\end{observation}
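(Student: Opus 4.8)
The plan is to exploit the fact that the number of vertices of $\str A$ is fixed to be $n$, which reduces the statement to a finite case analysis followed by an appeal to a known EPPA result. First I would observe that, up to isomorphism, there are only finitely many directed graphs on $n$ vertices; fix representatives $\str A_1,\dots,\str A_m$ of those among them that contain no independent set of size $k$ (this list is nonempty, as the tournament on $n$ vertices qualifies).

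Next I would use that the class of \emph{all} finite directed graphs has EPPA. Indeed, directed graphs form a free amalgamation class: given $\str B_1,\str B_2$ containing a common substructure $\str A$, gluing them along $\str A$ and adding no edges between $B_1\setminus A$ and $B_2\setminus A$ produces a directed graph, since no loop and no $2$-cycle is created. As this class is defined by forbidding (homomorphic images of) the single structure on two vertices with edges in both directions, EPPA follows from the Herwig--Lascar theorem~\cite{herwig2000}. Consequently, each $\str A_i$ admits a finite directed graph $\str B_i\supseteq\str A_i$ such that every partial automorphism of $\str A_i$ extends to an automorphism of $\str B_i$.

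Finally I would put $\ell = 1+\max_{1\le i\le m}|B_i|$. For an arbitrary directed graph $\str A$ on $n$ vertices with no independent set of size $k$, choosing an isomorphism onto some $\str A_i$ and transporting $\str B_i$ along it yields a directed graph $\str B\supseteq\str A$ which is an EPPA-witness for $\str A$ and satisfies $|B|=|B_i|<\ell$, so $\str B$ contains no independent set of size $\ell$. There is no real obstacle here; the only points needing a moment's care are that the witness $\str B$ must itself be a directed graph (automatic, since we work inside the class of all finite directed graphs) and that $\ell$ must be chosen independently of the particular $\str A$ --- which is exactly what the finiteness of the set of $n$-vertex directed graphs delivers. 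The whole point of Question~\ref{q:tournaments}, by contrast, is that there $\ell$ is allowed to depend only on $k$ and not on $n$, so this uniformity cannot be obtained so cheaply.
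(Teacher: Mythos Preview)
Your proof is correct and follows essentially the same approach as the paper's: both use that there are only finitely many $n$-vertex directed graphs with no independent set of size $k$, invoke EPPA for the class of all finite directed graphs to get a witness for each, and then take a uniform bound $\ell$ over these finitely many witnesses. The only cosmetic differences are that the paper cites Herwig~1995 directly for EPPA of directed graphs (rather than deriving it from Herwig--Lascar via free amalgamation) and sets $\ell$ to be the maximum independent-set size among the witnesses rather than one more than their maximum vertex count; either choice works.
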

\begin{proof}[Proof of Observation~\ref{obs:trivial}]
EPPA for directed graphs has been proved by Herwig in 1995~\cite{Herwig1995}. Given $k$ and $n$, there are only finitely many directed graphs on $n$ vertices with no independent set of size $k$. Put $\ell(k,n)$ to be the largest independent set in an EPPA-witness for one of these graphs produced by Herwig.
\end{proof}
We remark that in Section~4 of~\cite{Hubicka2018EPPA}, Hubi\v cka, Kone\v cn\'y, and Ne\v set\v ril prove the following theorem (or rather a more general version of it):
\begin{theorem}[\cite{Hubicka2018EPPA}]
Let $L$ be a finite relational language. Then for every $n$ there is an $L$-structure $\str B$ such that $\str B$ is an EPPA-witness for every $L$-structure $\str A$ on at most $n$ vertices.
\end{theorem}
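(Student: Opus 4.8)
The plan is to derive this from the classical fact that the class of \emph{all} finite $L$-structures has EPPA \cite{hrushovski1992,Herwig1995}: I would produce a single finite $L$-structure $\str U$ of bounded size into which every $L$-structure on at most $n$ vertices embeds as a substructure, and then let $\str B$ be any EPPA-witness of $\str U$.

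Concretely, fix $n$. Since $L$ is a finite relational language, there are only finitely many isomorphism types of $L$-structures on $n$ vertices; pick representatives $\str A_1,\dots,\str A_m$ of them on pairwise disjoint vertex sets, and let $\str U$ be their disjoint union --- that is, $U=\bigcup_{i=1}^m A_i$ and $R^{\str U}=\bigcup_{i=1}^m R^{\str A_i}$ for each $R\in L$, so that no tuple meeting two distinct $\str A_i$'s lies in any relation. Each $\str A_i$ is then a substructure of $\str U$. As every $L$-structure on at most $n$ vertices is a substructure of some $L$-structure on exactly $n$ vertices (pad with extra vertices lying in no relation), it follows that every $L$-structure $\str A$ on at most $n$ vertices admits an embedding $e_{\str A}\colon\str A\to\str U$ onto a substructure of $\str U$.

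Next I would apply EPPA for the class of all finite $L$-structures to obtain a finite $L$-structure $\str B$ which is an EPPA-witness of $\str U$; this $\str B$ depends only on $L$ and $n$, and since $\str U$ is a substructure of $\str B$, so is every $\str A$ as above, via $e_{\str A}$ composed with the inclusion $\str U\subseteq\str B$. It then remains to check that $\str B$ is an EPPA-witness of each such $\str A$: given a partial automorphism $p$ of $\str A$, i.e.\ an isomorphism between two substructures of $\str A$, transporting along $e_{\str A}$ turns it into an isomorphism between two substructures of $\str U$, i.e.\ a partial automorphism of $\str U$; since $\str B$ is an EPPA-witness of $\str U$, this extends to an automorphism of $\str B$, which therefore extends $p$ as well. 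Hence a single $\str B$ witnesses EPPA for all $\str A$ on at most $n$ vertices simultaneously.

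I expect no genuine obstacle here --- the entire combinatorial difficulty is absorbed into the black-box EPPA theorem for finite relational structures, and the rest is routine. The one point worth stating explicitly (the ``crux'', such as it is) is the transitivity of being a partial automorphism under substructures: an isomorphism between two substructures of $\str A$ really is a partial automorphism of $\str U$ once $\str A$ is identified with a substructure of $\str U$, because for relational languages a substructure of a substructure is again a substructure and induced relations are inherited. As a variant one could instead take EPPA-witnesses $\str B_i$ of the individual $\str A_i$ and set $\str B=\bigsqcup_{i=1}^m\str B_i$, extending any automorphism of a single $\str B_i$ by the identity on the remaining components; this works because all pairs spanning distinct components carry no relation and are hence trivially preserved.
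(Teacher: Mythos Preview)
Your argument is correct. Note, however, that the paper does not actually prove this theorem: it is quoted from \cite{Hubicka2018EPPA} as a cited result, immediately after the paper proves the weaker Observation~\ref{obs:trivial} by essentially the same disjoint-union-plus-black-box-EPPA reasoning you use here. So in spirit your approach matches what the paper does for the nearby observation.

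The genuine difference is with the proof in \cite{Hubicka2018EPPA} itself, which (as the paper indicates) is constructive: it builds $\str B$ directly as a valuation-style structure depending only on $n$ and $L$, and in particular yields explicit size bounds --- the paper mentions that \cite{BradleyEPPAnumbers} obtains $\lvert B\rvert = n3^{n-1}$. Your route, by contrast, passes through a disjoint union $\str U$ of all isomorphism types on $n$ vertices (already of size roughly $n\cdot 2^{cn^r}$ for $r$ the maximum arity) and then through an unspecified EPPA-witness of $\str U$, so it gives no control whatsoever over $\lvert B\rvert$. For the bare existence statement your argument is perfectly adequate; the constructive proof is what is needed for the quantitative applications the paper alludes to.
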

In~\cite{BradleyEPPAnumbers} it is shown that one can in fact get $\str B$ with $n3^{n-1}$ vertices and largest independent set of size $3^{n-1}$, hence proving $\ell(k,n)\leq 3^{n-1}$.

While Observation~\ref{obs:trivial} is trivial, Question~\ref{q:tournaments} seems as hard to approach as EPPA for directed graphs without large independent sets using the current methods. On the other hand, it seems to non-trivially relax the group-theoretic constraints and thus it might be a fruitful direction of attack: For example, tournaments ($k=2$) have no automorphisms of even order, but this is no longer true for directed graphs with no independent set of size 3. (However, it would be interesting to understand, group-theoretically, how much this is no longer true; of course, the edgeless graph on 2 vertices is such an example, or more generally, double covers of tournaments, but do we get (significantly) more than this in relation to potential EPPA-witnesses?)

\medskip

In our proof of ample generics for semigeneric tournaments we only proved APA over saturated twinless structures. To our best knowledge it is open whether this was necessary (that is, whether semigeneric tournaments have APA):
\begin{question}
Does the class of all finite semigeneric tournaments have the amalgamation property with automorphisms?
\end{question}
We conjecture that the answer is negative.

\medskip

In Proposition~\ref{prop:ample_twinless} we needed to slightly massage the construction from Section~\ref{sec:semig} to obtain an EPPA-witness with better properties. In particular, it shows that the original EPPA-witness is not optimal. There are some obvious inefficiencies (part valuation functions do not really need to valuate vertices from their own part, and similarly, vertex valuation functions do not have to valuate their own part), but the arbitrary choice of $v\in A_i$ in the argument that $\str B$ is saturated shows that one can save more. It is not immediately obvious to us how to construct such a smaller EPPA-witness without taking a quotient. It would be interesting to see such a construction as, perhaps, it may have some better properties, maybe it could be useful for coherent EPPA which we discuss next:

\subsection{Coherent EPPA}
There is a strengthening of EPPA called \emph{coherent EPPA} which was introduced by Siniora and Solecki~\cite{Siniora} with the goal of generalizing the proof of Bhattacharjee and Macpherson that the automorphism group of the countable random graph contains a dense locally finite subgroup~\cite{Bhattacharjee2005}: If $\str B$ is an EPPA-witness for $\str A$, we say that it is \emph{coherent} if there is a map $\Psi$ from partial automorphisms of $\str A$ to automorphisms of $\str B$ such that $\Psi(f)$ extends $f$, and $\Psi(gf) = \Psi(g)\Psi(f)$ for every pair $f,g$ of partial automorphisms of $\str A$ with $\range(f) = \dom(g)$. A class $\mathcal C$ has coherent EPPA if for every $\str A\in \mathcal C$ there is a coherent EPPA-witness $\str B\in\mathcal C$ for $\str A$.

Semigeneric and $n$-partite tournaments are, in addition to two-graphs~\cite{eppatwographs}, the only classes for which EPPA is known but coherent EPPA is open (and there are no known classes which provably have EPPA but not coherent EPPA). We thus ask:

\begin{question}
Do semigeneric and/or $n$-partite tournaments have coherent EPPA ($n\in \{2,\ldots,\omega\}$)?
\end{question}

In the case of two-graphs, even though coherent EPPA is open, it was possible to prove the group theoretic consequence of coherent EPPA, namely that the automorphism group of the generic countable two-graph contains a dense locally finite subgroup. However, for semigeneric and $n$-partite tournaments this is also open.

\begin{question}
Do the automorphism groups of the generic countable semigeneric and/or $n$-partite tournament contain a dense locally finite subgroup ($n\in \{2,\ldots,\omega\}$)?
\end{question}

Note that our constructions do not seem to lead to coherent EPPA-witnesses: In the definition of $\theta_\pi$, namely in the definition of $\chi'$, the choice to use $1+\chi(y)$ if $x < y$ and $\pi(x) > \pi(y)$ (in the $n$-partite case, for the semigeneric case it behaves in the same way) is not canonical, one could also use it if and only if $x > y$ and $\pi(x) < \pi(y)$. Because of this, it is not true that $\theta_\pi\theta_\sigma = \theta_{\pi\sigma}$: Consider, for example, the transposition $\pi = (xy)$ which fixes all other elements except for some $x < y\in A$. Clearly, $\pi = \pi^{-1}$. However, $\theta_\pi$ flips the $y$-th entry of all valuation functions with projection $x$, and so $\theta_\pi^2 = \theta_{x,y}$. We were not able to find a workaround for this problem.

Additionally, for the semigeneric tournament a similar issue arises also for $\theta_{a,v}$.

\subsection{Profinite topology}
Herwig and Lascar proved that EPPA for tournaments is equivalent to a statement about free groups:
\begin{theorem}[Herwig--Lascar~\cite{herwig2000}]
The following statements are equivalent:
\begin{enumerate}
  \item The class of all finite tournaments has EPPA.
  \item For every $n\geq 1$ and every finitely generated subgroup $H$ of $F_n$, the free group on $n$ elements, $H$ is closed in the odd-adic topology on $F_n$ if and only if, for every $a\in F_n$, if $a^2\in H$ then $a\in H$.
\end{enumerate}
\end{theorem}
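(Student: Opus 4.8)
The plan is to reduce the inner biconditional in (2) to a single implication and then prove that implication equivalent to EPPA for tournaments in two steps, with the elementary observation that the automorphism group of every finite tournament has odd order (it has no involution, hence by Cauchy's theorem no element of even order) serving as the bridge. First I would dispose of the ``easy'' half of (2): if a subgroup $H\le F_n$ is closed in the odd-adic topology and $a\notin H$, choose an epimorphism $\phi\colon F_n\to G$ onto a finite group of odd order with $\phi(a)\notin\phi(H)$; writing $2k+1$ for the (odd) order of $\phi(a)$ we get $\phi(a)=(\phi(a)^2)^{k+1}\in\langle\phi(a^2)\rangle$, so $a^2\in H$ would put $\phi(a)$ inside $\phi(H)$, a contradiction. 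Hence (2) is equivalent to the statement $(2')$: \emph{every finitely generated $H\le F_n$ for which $a^2\in H$ implies $a\in H$ is closed in the odd-adic topology}. It remains to show (1)$\iff(2')$.

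For $(1)\implies(2')$, fix a finitely generated $H\le F_n$ with the square-root property and some $g\notin H$. I would encode the membership problem for $H$ by a finite tournament: starting from a Stallings-type core graph of $H$, build a finite tournament $\str A$ together with partial automorphisms $p_1,\dots,p_n$, one for each free generator of $F_n$, arranged so that any finite tournament containing $\str A$ in which all the $p_i$ become automorphisms induces a homomorphism $F_n\to\Aut(\str B)$ sending $g$ outside the image of $H$. The square-root property of $H$ is exactly what makes this construction land inside the class of tournaments --- a spurious non-edge in $\str A$ would force a relation of the shape $a^2\in H$, $a\notin H$. Applying EPPA for tournaments produces the desired $\str B$; since $\Aut(\str B)$ has odd order, the induced quotient of $F_n$ is a finite odd group separating $g$ from $H$, and as $g$ was arbitrary, $H$ is closed in the odd-adic topology.

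For $(2')\implies(1)$, take a finite tournament $\str A$ with partial automorphisms $p_1,\dots,p_k$ and run the Herwig--Lascar ``free amalgamation over a group action'' construction: after the standard reduction one obtains a (countable) structure $\hat{\str A}$ on which the free group $F_k$ acts by genuine automorphisms and which contains $\str A$ as a fundamental domain. A finite tournament EPPA-witness for $\str A$ is the same thing as a finite quotient $F_k\twoheadrightarrow G$ through which this action descends \emph{without ever identifying two orbits into a non-edge}; the identifications to be avoided are governed by finitely many cosets of finitely generated subgroups of $F_k$, and the fact that $\str A$ is a tournament and the $p_i$ are partial automorphisms of it forces exactly these subgroups to satisfy $a^2\in H\Rightarrow a\in H$ (an edge can never coincide with its reversal). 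Invoking $(2')$ makes these subgroups closed in the odd-adic topology, which supplies a finite odd quotient $G$ simultaneously avoiding all the bad cosets; collapsing along $G$ yields a finite tournament $\str B\supseteq\str A$ in which every $p_i$ has been extended to an automorphism.

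The main obstacle --- and the reason this is an equivalence rather than an unconditional theorem, as well as the point where proving EPPA for tournaments outright is stuck --- is making the free-construction-and-collapse machinery precise in the tournament setting: one must pin down which finitely generated subgroups of $F_k$ control the collapse, verify that the tournament axioms make precisely those subgroups square-root-closed, and, in the converse direction, check both that the Stallings-graph encoding of $H$ faithfully reflects coset membership and that the odd order of the automorphism group of the EPPA-witness is what lets one conclude closedness in the \emph{odd}-adic topology rather than merely in the full profinite topology.
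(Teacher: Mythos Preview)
The paper does not prove this theorem at all: it is quoted in the concluding section as a result of Herwig and Lascar~\cite{herwig2000}, with no argument beyond the definition of the odd-adic topology. There is therefore nothing in the paper to compare your proposal against.

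That said, what you have written is a reasonable outline of the original Herwig--Lascar argument. Your reduction of (2) to the one-sided statement $(2')$ via the ``squaring is a bijection in odd groups'' trick is correct, and the two remaining directions are indeed handled in \cite{herwig2000} essentially as you describe: encoding the coset-separation problem for a square-root-closed subgroup as a partial-automorphism extension problem for a finite tournament, and conversely collapsing a free construction along a finite odd quotient that avoids finitely many bad cosets. Your final paragraph is honest about where the real work lies; turning this outline into a proof would require you to reproduce a substantial portion of \cite{herwig2000}, and the present paper makes no attempt to do so.
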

Here, the \emph{odd-adic} topology on $F_n$ is given by the following basis of open sets:
$$\{gH : g\in F_n, H\text{ is a normal subgroup of $F_n$ of odd index}\}.$$
This is closely related to the more well-known \emph{profinite topology} of $F_n$ whose basis of open sets is:
$$\{gH : g\in F_n, H\text{ is a subgroup of $F_n$ of finite index}\},$$
see e.g.~\cite{Hall1950,Ribes1993}.

Another connection of this flavour has been used by Huang, Pawliuk, Sabok, and Wise to disprove EPPA for the so-called $L$-hypertournaments, where $L$ is a set of prime numbers~\cite{Sabok}. It would be interesting to see if our results imply something interesting about profinite-like topologies.

\section*{Acknowledgments}
The authors would like to thank David Evans for helpful comments, and to both anonymous referees for their detailed reading and comments which improved the quality of this paper.

\bibliography{ramsey.bib}
\end{document}